\newtheorem{proposition}{Proposition}
\newtheorem{corollary}[proposition]{Corollary}
\newtheorem{lemma}[proposition]{Lemma}
\newtheorem{theorem}[proposition]{Theorem}
\newtheorem*{conjecture*}{Conjecture}
\newtheorem*{theorem*}{Theorem}
\newtheorem*{corollary*}{Corollary}
\newtheorem*{proposition*}{Proposition}
\newtheorem*{lemma*}{Lemma}
\theoremstyle{definition}
\newtheorem*{definition*}{Definition}
\newtheorem*{construction*}{Construction}
\theoremstyle{remark}
\newtheorem{remark}[proposition]{Remark}
\newtheorem*{remark*}{Remark}
\newtheorem*{variant*}{Variant}
\newtheorem*{example*}{Example}
\newcommand{\id}{\operatorname{id}}
\newcommand{\Z}{\mathbb{Z}}
\newcommand{\F}{\mathbb{F}}
\let\bb=\mathbb
\def\A{\bb A}
\let\lim=\relax
\DeclareMathOperator*{\lim}{lim}
\def\CAlg{\mathrm{CAlg}}
\def\mot{\mathrm{mot}}
\newcommand{\et}{{\acute{e}t}}
\newcommand{\comp}{{{\kern -.5pt}\otimes}}
\newcommand{\syn}{\mathrm{syn}}
\newcommand{\cycsp}{\mathrm{CycSp}}
\newcommand{\TC}{\mathrm{TC}}
\newcommand{\THH}{\mathrm{THH}}
\newcommand{\hh}{\mathrm{HH}}
\newcommand{\thh}{\THH}
\newcommand{\tp}{\mathrm{TP}}
\newcommand{\tc}{\TC}
\newcommand{\NN}{\ensuremath{\mathbb{N}}}
\newcommand{\ZZ}{\ensuremath{\mathbb{Z}}}
\newcommand{\FF}{\ensuremath{\mathbb{F}}}
\newcommand{\TT}{\ensuremath{\mathbb{T}}}
\DeclareSymbolFontAlphabet{\mathbb}{AMSb} 
\DeclareSymbolFontAlphabet{\mathbbl}{bbold}
\newcommand{\Prism}{{\mathlarger{\mathbbl{\Delta}}}}
\numberwithin{proposition}{section}
\numberwithin{equation}{section}
\theoremstyle{proposition}
\title{On the $K$-theory of the $p$-adic unit disk}
\date{\today}
\author{Elden Elmanto}
\author{Noah Riggenbach}
\DeclareMathSymbol{A}{\mathalpha}{operators}{`A}
\DeclareMathSymbol{B}{\mathalpha}{operators}{`B}
\DeclareMathSymbol{C}{\mathalpha}{operators}{`C}
\DeclareMathSymbol{D}{\mathalpha}{operators}{`D}
\DeclareMathSymbol{E}{\mathalpha}{operators}{`E}
\DeclareMathSymbol{F}{\mathalpha}{operators}{`F}
\DeclareMathSymbol{G}{\mathalpha}{operators}{`G}
\DeclareMathSymbol{H}{\mathalpha}{operators}{`H}
\DeclareMathSymbol{I}{\mathalpha}{operators}{`I}
\DeclareMathSymbol{J}{\mathalpha}{operators}{`J}
\DeclareMathSymbol{K}{\mathalpha}{operators}{`K}
\DeclareMathSymbol{L}{\mathalpha}{operators}{`L}
\DeclareMathSymbol{M}{\mathalpha}{operators}{`M}
\DeclareMathSymbol{N}{\mathalpha}{operators}{`N}
\DeclareMathSymbol{O}{\mathalpha}{operators}{`O}
\DeclareMathSymbol{P}{\mathalpha}{operators}{`P}
\DeclareMathSymbol{Q}{\mathalpha}{operators}{`Q}
\DeclareMathSymbol{R}{\mathalpha}{operators}{`R}
\DeclareMathSymbol{S}{\mathalpha}{operators}{`S}
\DeclareMathSymbol{T}{\mathalpha}{operators}{`T}
\DeclareMathSymbol{U}{\mathalpha}{operators}{`U}
\DeclareMathSymbol{V}{\mathalpha}{operators}{`V}
\DeclareMathSymbol{W}{\mathalpha}{operators}{`W}
\DeclareMathSymbol{X}{\mathalpha}{operators}{`X}
\DeclareMathSymbol{Y}{\mathalpha}{operators}{`Y}
\DeclareMathSymbol{Z}{\mathalpha}{operators}{`Z}
\begin{document}

\maketitle

\begin{abstract} In this note, we study the $p$-complete topological cyclic homology of the affine line relative to a ring $A$ which is smooth over a perfectoid ring $R$. Denoting by $NTC(A; \Z_p)$ the spectrum which measures the failure of $\A^1$-invariance on $A$, we observe a kind of Quillen-Lichtenbaum phenomena for $NTC(A; \Z_p)$ --- that it is isomorphic to its own $K(1)$-localization in a specified range of degrees which depends on the relative dimension of $A$. Somewhat surprisingly, this range is better than considerations following from a theorem of Bhatt-Mathew and \'etale-to-syntomic comparisons. Via the Dundas-Goodwillie-McCarthy theorem, we obtain a description of the algebraic $K$-theory of $p$-completed affine line over such rings.

\end{abstract}

\tableofcontents
\section{Introduction} The goal of this note is to expand on a new strand of calculations in ($p$-adic) algebraic $K$-theory first discovered, at least to the authors' knowledge, in the work of the second author \cite{riggenbach2022cusps} where the $K$-theory of cuspidal curves were computed over perfectoid rings. Over these $p$-adic bases,  one can either consider the cusp as an algebraic scheme or as a formal scheme. The answers turn out to differ. The key difference arises from the following pheomena: let $R$ be a perfectoid ring, then Antieau-Mathew-Morrow observed that $K(R[t]) \simeq K(R)$ \cite{amm-perfd}, suggesting that $R$ behaves like a regular ring. On the other hand if we $p$-complete the affine line and set $R\langle t \rangle := R[t]^\wedge_p $, it is no longer the case that $K(R\langle t \rangle) \simeq K(R)$. The resulting calculation, at least after profinite completion, is recorded by the second author in \cite[Theorem 1.2]{riggenbach2022cusps}. 

An interesting feature of this calculation is that the relative groups $K(R\langle t \rangle, (t); \widehat{\Z})$ has vanishing even homotopy groups and that the odd homotopy groups are all isomorphic. In fact, the mechanism by which this happens is a ``Quillen-Lichtenbaum'' style phenomena for topological cyclic homology ($TC$) of the affine line. To explain this, we set
\[
NTC(R;\Z_p):= \mathrm{fib}(TC(R[t], \Z_p) \rightarrow TC(R; \Z_p));
\]
when we apply $K$-theory in place of $TC(-;\Z_p)$ we recover Bass' $NK$ spectra whose homotopy groups are the $K$-theory of nilpotent endomrophisms (up to a shift). Since $TC$ only depends on the input up to $p$-adic completion, we have that 
\[
TC(R[t];\Z_p) \simeq TC(R\langle t \rangle; \Z_p).
\] The henselian invariance results of \cite{clausen2018k} provides an equivalence
\[
\tau_{\geq 0}NTC(R;\Z_p) \simeq K(R\langle t \rangle, (t); \Z_p).
\] The key calculation in \cite[Theorem 4.14]{riggenbach2022cusps} witnesses $NTC(R;\Z_p)$ as the connective cover of its $K(1)$-localization which is evidently $2$-periodic. As explained in \emph{loc. cit.} the map from $NTC(R;\Z_p) \rightarrow L_{K(1)}NTC(R;\Z_p)$ is, in fact, $(-2)$-truncated\footnote{Recall that a map of spectra $X \rightarrow Y$ is said to be $n$-truncated if the fibre $F$ is $n$-truncated, that is, $\pi_i(F) = 0$ for $i > n$. In other words, the map $X \rightarrow Y$ induces an isomorphism on $\pi_i$ for $i \geq n+2$ and an injection in degree $i\geq n+1$.}. As we will explain in this paper, it is not that surprising that this map is $n$-truncated for some $n$ granting certain considerations from motivic filtrations; in fact the map $TC(R;\Z_p) \rightarrow L_{K(1)}TC(R;\Z_p)$ is $n$-truncated for some $n$. However, the specific bound on the $NTC$ part is somewhat surprising and feeds back information into the syntomic cohomology of $R\langle t \rangle$.

The first main result of this paper is a generalization of this Quillen-Lichtenbaum phenomena observed in \cite{riggenbach2022cusps}.

\begin{theorem}\label{thm:main}
Let $R$ be a perfectoid ring. Let $A$ be the $p$-adic completion of smooth $R$-algebra of relative dimension $d$. Then the map \[N\tc(A;\ZZ_p)\to L_{K(1)}N\tc(A;\ZZ_p)\] is $(d-1)$-truncated and an isomorphism \footnote{This is equivalent to saying that it is a $\tau_{\geq d}$-equivalence.} in degree $d$. If $R$ is further assumed to be $p$-torsion free then this map is also $(d-2)$-truncated. In particular, the map $K(A\langle t\rangle, (t); \ZZ_p) \rightarrow L_{K(1)}K(A\langle t\rangle, (t); \ZZ_p)$ is $(d-1)$-truncated and an isomorphism in degree $d$. 
\end{theorem}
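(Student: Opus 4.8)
The plan is to compare the motivic filtrations on $N\tc(A;\ZZ_p)$ and $L_{K(1)}N\tc(A;\ZZ_p)$ and thereby reduce to a bound on the cohomological degrees in which the syntomic-to-\'etale comparison can fail on the \emph{relative} syntomic complexes of the $p$-adic affine line over $A$. Write $N\ZZ_p(i)(A):=\fib\!\big(\ZZ_p(i)(A\langle t\rangle)\to\ZZ_p(i)(A)\big)$ for the relative weight-$i$ syntomic complex (using $\ZZ_p(i)(A[t])\simeq\ZZ_p(i)(A\langle t\rangle)$, as syntomic cohomology is $p$-complete), and, since $A[1/p]$ and $A\langle t\rangle[1/p]$ are smooth over the perfectoid Tate ring $R[1/p]$ and hence of finite $p$-cohomological dimension, $N\ZZ_p^{\et}(i)(A):=\fib\!\big(R\Gamma_{\et}(A\langle t\rangle[1/p];\ZZ_p(i))\to R\Gamma_{\et}(A[1/p];\ZZ_p(i))\big)$. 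The Bhatt--Morrow--Scholze motivic filtration makes $N\tc(A;\ZZ_p)$ a complete exhaustive filtered spectrum with $\mathrm{gr}^i=N\ZZ_p(i)(A)[2i]$, and the \'etale ($K(1)$-local) motivic filtration does the same for $L_{K(1)}N\tc(A;\ZZ_p)$ with $\mathrm{gr}^i=N\ZZ_p^{\et}(i)(A)[2i]$. The comparison map refines to a map of such filtered spectra, and on $\mathrm{gr}^i$ it is the relative syntomic-to-\'etale comparison; writing $C_i$ for its fibre, the fibre $F$ of the map in the theorem acquires a complete exhaustive filtration with $\mathrm{gr}^iF\simeq C_i[2i]$, so everything reduces to locating the cohomology of each $C_i$.

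Two inputs are needed. For the lower bound, the Beilinson--Lichtenbaum property of syntomic cohomology (Bhatt--Mathew; Colmez--Nizio{\l}) gives that for $B\in\{A,A\langle t\rangle\}$ the map $\ZZ_p(i)(B)\to R\Gamma_{\et}(B[1/p];\ZZ_p(i))$ is an isomorphism on $\mathrm{H}^{\le i}$ and injective on $\mathrm{H}^{i+1}$; passing to fibres over $A$ shows $C_i$ is concentrated in cohomological degrees $\ge i+1$. The crux is the matching upper bound: \emph{$C_i$ is concentrated in cohomological degrees $\le d+1$, and $\le d$ if $R$ is $p$-torsion free}. For this I would use the arithmetic fracture square, which identifies $C_i$ with the contribution of the special fibre --- a fibre of $\mathrm{can}-\phi_i$ on a piece of Nygaard-filtered prismatic cohomology. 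The perfectoid base is harmless because $\Prism_R\simeq A_{\inf}(R)$ is concentrated in degree $0$; the remaining direction is that of $\A^1$ over a smooth $A$-algebra of relative dimension $d$, so the relevant prismatic cohomology sits in degrees $[0,d+1]$ and its $\mathrm{can}-\phi_i$-fibre a priori in $[0,d+2]$. To remove the top degree one passes to the $\Gm$-weight decomposition $N\ZZ_p(i)(A)=\bigoplus_{j\ge 1}\ZZ_p(i)(A[t])^{(j)}$, reduces each summand to an explicit two-term complex in the $\A^1$-variable on which $\mathrm{can}-\phi_i$ acts, and proves surjectivity on its top cohomology; when $R$ is $p$-torsion free this surjectivity holds outright --- it is the perfectoid avatar of surjectivity of $1-\phi$ on the Witt vectors of a perfect ring --- and yields the sharper bound. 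This top-degree surjectivity, uniform in the weight $j$, the twist $i$ and the relative dimension $d$, is the main obstacle; the honest $d$-dependence of the answer and the dichotomy between the $p$-torsion-free and general cases live entirely in it, while Step~1 and the Beilinson--Lichtenbaum input are formal given the motivic-filtration technology.

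Granting the bounds, $C_i$ is concentrated in cohomological degrees $[i+1,d+1]$, resp.\ $[i+1,d]$ for $p$-torsion-free $R$, which is empty unless $i\le d$, resp.\ $i\le d-1$. Hence $\mathrm{gr}^iF\simeq C_i[2i]$ is concentrated in homological degrees $[2i-d-1,\,i-1]$, resp.\ $[2i-d,\,i-1]$, so every $\mathrm{gr}^iF$ is $(d-1)$-truncated, resp.\ $(d-2)$-truncated (for $i>d$, resp.\ $i>d-1$, it vanishes; for smaller $i\ge 0$ the top is $i-1$; for $i<0$ it lies in negative degrees). Since the filtration on $F$ is complete and truncatedness is stable under limits, $F$ is $(d-1)$-truncated, resp.\ $(d-2)$-truncated. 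Finally $\pi_dF=0$ for degree reasons, and among all weights only $\mathrm{gr}^dF$, i.e.\ $\mathrm{H}^{d+1}(C_d)$, can contribute to $\pi_{d-1}F$; feeding the injectivity clause of Beilinson--Lichtenbaum through the fibre sequence $C_d\to N\ZZ_p(d)(A)\to N\ZZ_p^{\et}(d)(A)$ forces the connecting map $\pi_d L_{K(1)}N\tc(A;\ZZ_p)\to\pi_{d-1}F$ to vanish, so the map is an isomorphism in degree $d$.

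For the $K$-theoretic consequence, recall from \cite{clausen2018k} (as in the introduction) that $\tau_{\ge 0}N\tc(A;\ZZ_p)\simeq K(A\langle t\rangle,(t);\ZZ_p)$. Every bounded-above spectrum is $K(1)$-acyclic --- its truncations are finite towers of Eilenberg--Mac\,Lane spectra, which are $H\ZZ$-modules and hence $K(1)$-acyclic, and $L_{K(1)}$ commutes with the limit --- so $\tau_{\le -1}N\tc(A;\ZZ_p)$ is $K(1)$-acyclic and $L_{K(1)}K(A\langle t\rangle,(t);\ZZ_p)\simeq L_{K(1)}N\tc(A;\ZZ_p)$. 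Thus the map $K(A\langle t\rangle,(t);\ZZ_p)\to L_{K(1)}K(A\langle t\rangle,(t);\ZZ_p)$ is the composite $\tau_{\ge 0}N\tc(A;\ZZ_p)\to N\tc(A;\ZZ_p)\to L_{K(1)}N\tc(A;\ZZ_p)$, whose first arrow is $(-2)$-truncated and an isomorphism on homotopy in degrees $\ge 0$; for $d\ge 1$ this first arrow is therefore invisible in the relevant range, so the truncatedness and the degree-$d$ isomorphism transfer to the composite, and the case $d=0$ is \cite[Theorem~4.14]{riggenbach2022cusps}.
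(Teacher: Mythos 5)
Your proposal takes a genuinely different route from the paper --- you work entirely on the level of the motivic/syntomic filtration and try to bound the fibre of the relative syntomic-to-\'etale comparison $C_i$, whereas the paper proves a sharpened Segal conjecture at the level of $\thh$ (Proposition~\ref{prop: coconnectivity of frobenius}), commutes the weight decomposition of $N\tc^-$ and $N\tp$ past Nygaard/Tate completions (Lemma~\ref{lem: fixed points direct sum commute kinda}, Remark~\ref{rem:key}), and assembles the general perfectoid case from the $p$-torsion-free and characteristic-$p$ cases via the Land--Tamme Milnor square. Your framing is attractive, and the numerology in your Step~3 does check out once one grants the two bounds on $C_i$; the $K$-theoretic consequence via Dundas--Goodwillie--McCarthy is also fine and matches the paper's Lemma~\ref{lem: TC to K theory translation}.

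However, the proposal has a real gap at exactly the place where the theorem is nontrivial. The upper bound ``$C_i$ is concentrated in degrees $\le d+1$, and $\le d$ when $R$ is $p$-torsion free'' is not a formal consequence of $F$-smoothness or Nygaard dimension; the naive application of \cite[Theorem 1.8]{Bhatt_Mathew_fsmooth} (as in Section~2 of the paper) would only give a range one degree worse, and the paper explicitly flags the improvement as ``somewhat surprising.'' You reduce this to a surjectivity-in-top-degree statement for $\mathrm{can}-\phi_i$ on the $\Gm$-weight pieces and call it ``the main obstacle'' --- but you do not prove it, and the sketch you give (``an explicit two-term complex in the $\A^1$-variable,'' ``the perfectoid avatar of surjectivity of $1-\phi$ on Witt vectors'') glosses over precisely what the paper spends Lemma~\ref{lem: fixed points direct sum commute kinda} establishing: that the $\Gm$-weight decomposition of $N\tc^-$ and $N\tp$ only holds up to a Nygaard ($u$- or $\xi$-adic) completion, and that the $p$-completed direct sum of the weight pieces is \emph{already} $\xi$-adically complete, so no extra completion intervenes. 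Without this, you cannot reduce ``termwise'' in $j$ as your argument requires. That completeness is the technical heart of the paper, and it is entirely missing from your proof.

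There are secondary gaps too. Your use of Beilinson--Lichtenbaum for the lower bound on $C_i$ requires $F$-smoothness of $A$ and $A\langle t\rangle$, which is known (via Bhatt--Mathew) when $R$ is $p$-torsion free but not in general; the paper handles a general perfectoid base by a Milnor-square excision argument (Lemma~\ref{lem: main result for polynomial algebras}, Theorem~\ref{thm:main-body}) which your proposal does not engage with. Finally, the degree-$d$ isomorphism needs more than ``$\pi_dF=0$ for degree reasons'': one must show the boundary map $\pi_d L_{K(1)}N\tc \to \pi_{d-1}F$ vanishes, and it is not clear that the BL injectivity clause alone forces this in weight $d$; the paper instead runs a separate comparison between $A$ and its $p$-torsion-free quotient $A_0$ to get surjectivity in degree~$d$.
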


This theorem allows us to deduce a calculation of the $K$-groups of the $p$-adic unit disc over a curve relative to a perfectoid ring with enough roots of unity, in terms of syntomic cohomology. This extends \cite[Corollary 4.17]{riggenbach2022cusps} to one relative dimension higher. 

\begin{theorem}\label{thm:main-curves}
    Let $C$ be the $p$-adic completion of a smooth affine curve over $R$, where $R$ is a perfectoid $\mathbb{Z}_p^{cycl}$-algebra. Then:
    \[
    K_{n}(C\langle t\rangle, (t);\Z_p) =\begin{cases}
    H^0(C\langle t\rangle;(\mathbb{G}_m)^\wedge_p)/H^0(C;(\mathbb{G}_m)^\wedge_p) & n = 2i-1, i \geq 1,\\
     H^2(C; N\mathbb{Z}_p(2)^{syn}) & n = 2i, i \geq 1.
    \end{cases} 
    \]
 
\end{theorem}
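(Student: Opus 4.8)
The plan is to combine Theorem \ref{thm:main} (for $d=1$) with an explicit computation of the $K(1)$-local syntomic cohomology of $C\langle t\rangle$ relative to $C$. First I would reduce to syntomic cohomology: since $d=1$, Theorem \ref{thm:main} says that $N\tc(C;\Z_p)\to L_{K(1)}N\tc(C;\Z_p)$ is $0$-truncated, and (using $p$-torsion freeness of the $\Z_p^{cycl}$-algebra $R$, hence of $C$) in fact $(-1)$-truncated. Combined with the henselian-invariance identification $\tau_{\geq 0}N\tc(C;\Z_p)\simeq K(C\langle t\rangle,(t);\Z_p)$ from \cite{clausen2018k}, this means that the relative $K$-groups in all degrees $\geq 1$ agree with the homotopy groups of $L_{K(1)}N\tc(C;\Z_p)$, i.e. with $K(1)$-local (relative) $TC$. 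By the $K(1)$-local ($p$-complete) motivic/syntomic filtration on $L_{K(1)}TC$ — that is, the Bhatt--Clausen--Mathew style identification of $L_{K(1)}TC$ with the étale-sheafified (or equivalently, after inverting the Bott element, $K(1)$-local) syntomic cohomology — one gets a spectral sequence whose $E_2$-page is $H^{*}(C\langle t\rangle/C; N\Z_p(\bullet)^{syn})$ converging to $\pi_{*}L_{K(1)}N\tc(C;\Z_p)$.

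The second step is to show this spectral sequence degenerates and that only two rows survive. The relative syntomic cohomology $H^j(C\langle t\rangle; N\Z_p(i)^{syn})$ should vanish outside $j\in\{1,2\}$: the vanishing for $j\geq 3$ follows from the fact that $C\langle t\rangle$ has (relative) dimension bounded appropriately and the syntomic complexes $\Z_p(i)^{syn}$ live in a bounded cohomological range (Bhatt--Mathew), while the $N$-construction kills the $j=0$ part. Moreover, because $R$ contains enough roots of unity (it is a $\Z_p^{cycl}$-algebra), the Bott element is a genuine element of $\pi_0$, so all the Breuil--Kisin twists are (relatively) trivialized: $N\Z_p(i)^{syn}\simeq N\Z_p(1)^{syn}$ for all $i\geq 1$, and one identifies $N\Z_p(1)^{syn}$ with (a shift of) $N(\mathbb{G}_m)^\wedge_p$, giving the $H^0$-of-units description in odd degrees via $H^1(C\langle t\rangle; N\Z_p(1)^{syn})\cong H^0(C\langle t\rangle;(\mathbb{G}_m)^\wedge_p)/H^0(C;(\mathbb{G}_m)^\wedge_p)$. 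The even-degree groups come from $H^2(C\langle t\rangle; N\Z_p(i)^{syn})\cong H^2(C;N\Z_p(2)^{syn})$ after using the Bott-periodicity identification to fix the weight at $2$ (the precise weight is just bookkeeping: weight $i$ at homotopical degree $2i$, shifted into a fixed weight by the Bott element).

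Concretely I would run the following steps in order: (1) invoke Theorem \ref{thm:main} and \cite{clausen2018k} to identify $K_n(C\langle t\rangle,(t);\Z_p)$ with $\pi_n L_{K(1)}N\tc(C;\Z_p)$ for $n\geq 1$; (2) write down the $K(1)$-local syntomic descent spectral sequence for $N\tc$ and identify its $E_2$-page with relative syntomic cohomology; (3) prove the range-of-vanishing statement $H^j(C\langle t\rangle;N\Z_p(i)^{syn})=0$ for $j\notin\{1,2\}$, using boundedness of $\Z_p(i)^{syn}$ and the relative dimension; (4) use the roots-of-unity hypothesis to trivialize Breuil--Kisin twists and identify the surviving groups with $H^0((\mathbb{G}_m)^\wedge_p)$-quotients and $H^2(C;N\Z_p(2)^{syn})$ respectively; (5) conclude that the spectral sequence degenerates for degree reasons (only two adjacent rows, and the relevant differentials land in the zero region or are forced to vanish by the $2$-periodicity coming from the Bott element). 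The main obstacle I anticipate is step (3)–(4): pinning down the exact cohomological amplitude of the \emph{relative} syntomic complex $N\Z_p(i)^{syn}$ for $C\langle t\rangle$ and checking that the twist-trivialization is compatible with the $N$-construction and with the spectral-sequence differentials — this is where one must be careful that the improved bound in Theorem \ref{thm:main} (the $(d-2)$-truncation in the $p$-torsion-free case, here $(-1)$-truncation) is exactly what is needed to kill any contribution that would otherwise appear in degree $1$ from higher-weight pieces, and to see that $\pi_0$ of the relative term vanishes so that the answer is concentrated in the two stated congruence classes.
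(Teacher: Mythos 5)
Your overall strategy — reduce to $L_{K(1)}N\tc$ via Theorem~\ref{thm:main}, then read off homotopy groups via a syntomic spectral sequence — is in the same spirit as the paper, but the paper actually organizes the argument differently and your step (3) has a genuine gap.

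The paper does not run a $K(1)$-local descent spectral sequence over all weights. Instead it first observes that $\tau_{\geq 1}K(C\langle t\rangle,(t);\Z_p)\simeq \tau_{\geq 1}L_{K(1)}N\tc(C;\Z_p)$ is the $1$-connective cover of an $L_{K(1)}K(\Z_p^{cycl})$-module, which is $2$-periodic because $\Z_p^{cycl}$ has all $p$-power roots of unity. This reduces the entire computation to $K_1$ and $K_2$, and then one reads off those two groups from the (non-$K(1)$-local) BMS motivic spectral sequence for $N\tc(C;\Z_p)$. That is a finite computation in weights $1$ and $2$ only, and it avoids the bookkeeping of trivializing Breuil--Kisin twists that your approach requires. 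Your approach can probably be made to work, but it is harder to control the $E_2$-page uniformly in weight and then identify the $K(1)$-local answer with the syntomic group $H^2(C;N\Z_p(2)^{syn})$ appearing in the statement.

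The concrete gap is the vanishing $H^j(C\langle t\rangle; N\Z_p(i)^{syn})=0$ for $j\geq 3$. The Nygaard dimension of $C\langle t\rangle$ is $\leq 2$, which only puts $N\Z_p(i)^{syn}$ in cohomological degrees $[0,3]$, not $[0,2]$: the syntomic complex is a fibre of a map whose source lies in $[0,2]$, so its amplitude reaches degree $3$. Dimension considerations alone therefore do not kill $H^3$. The paper kills $H^3(C;N\Z_p(2)^{syn})$ by a separate argument: by \cite[Theorem 5.2]{antieau2020beilinson} the map $H^3(C;N\Z_p(2)^{syn})\to H^3(C/\sqrt{p};N\Z_p(2)^{syn})$ is an isomorphism, and the target is shown to vanish in Lemma~\ref{lem: vanishing in positive characteristic for curves}, which in turn uses the vanishing of $\tau_{\geq 1}NTC$ in positive characteristic (Theorem~\ref{thm: main theorem in positive characteristic}). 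Without some version of that reduction-to-characteristic-$p$ argument, your spectral sequence would have a potential $E_2$-contribution from $H^3(N\Z_p(2)^{syn})$ to $K_1$ that you have not ruled out.
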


\subsection{Methods} There are, by now, more than one way to perform calculations of $K$-theory using trace methods. For example, one can redo Hesselholt-Madsen's classic calculation of $K$-theory of truncated polynomial rings over a perfect field of characteristic $p > 0$ \cite{hesselholt-madsen-trunc} via Nikolaus-Scholze's formulation of TC \cite{nikolaus2017topological} as performed by Speirs in \cite{speirs-trunc} or one can access it via prismatic cohomology as in Mathew \cite{mathew-recent} and Sulyma \cite{sulyma}.

In part, our paper proceeds similarly by favoring the Nikolaus-Scholze formula and using it to directly access a $TC$-theoretic and, subsequently, a $K$-theoretic statement. However, we do appeal to the motivic filtration and eventually prismatic cohomology to justify a technical lemma (see Lemma~\ref{lem: tate is coherent}) which passes our results from polynomial rings to smooth algebras. To carry this out, we improve Mathew's solution of the Segal conjecture for smooth over torsion-free perfectoid rings \cite{Mathew2021TR} in  \S\ref{sec:segal}. Our method, however, differs from his and follows more closely Hesselholt's method in \cite{lars-hasse-weil}. We then go from the Segal conjecture to a calculation of NTC by improving the second author's observation that one can commute certain direct sums against homotopy fixed and Tate constructions as long as we ``Nygaard complete'' the answers; see Lemma~\ref{lem: fixed points direct sum commute kinda}. Having these two facts, we deduce our main results in \S\ref{sec:ql} using fairly standard methods.  We also remark that Theorems~\ref{thm:main} and~\ref{thm:main-curves} also work for smooth, possibly non-affine, schemes over $R$ thanks to descent and Lemma~\ref{lem: tate is coherent}.

\subsection{Notation} We freely use the modern treatment of topological cyclic homology as in \cite{nikolaus2017topological}, its attendant motivic filtration as produced by Bhatt-Morrow-Scholze \cite{bhatt2019topological} and the theory of prisms and prismatic cohomology following \cite{prismatic, Bhatt-Scholze}. We also implicitly fix a prime $p$ throughout this paper. In particular, we use the following standard names for elements that frequently appear in this theory.
\begin{enumerate}
\item if $R$ is a perfectoid ring, we use $u \in TC_{2}(R;\ZZ_p)$ for the B\"okstedt generator and $\sigma \in TP_2(R;\ZZ_p)$ be the inverse of the chern class in $TP_{-2}(R;\ZZ_p)$.
\item if $(A, I)$ is an oriented prism, we use use $\xi$ for the (Frobenius inverse) of the generator of the Hodge-Tate divisor, i.e., we have that $(\phi(\xi)) = I$.
\end{enumerate}

\subsection{Acknowledgements} The first author is grateful to Lars Hesselholt for numerous discussions in and around topological cyclic homology of the affine line, starting from when he was a postdoc at Copenhagen. The authors are grateful to Akhil Mathew and Matthew Morrow for helpful discussions about the subject matter of this paper. The second author is supported by the Simons Collaboration on Perfection.

\section{\texorpdfstring{Some \emph{a priori} bounds for $p$-torsion free rings}{\emph{A priori} bounds}}

As stated in the introduction, our results on $NTC(A;\mathbb{Z}_p)$, where $A$ is a smooth-over-perfectoid, are better than one should expect. In this section we record a Quillen-Lichtenbaum style isomorphism between $TC$ and $L_{K(1)}TC$ for some rather general $A$'s. 

Towards this end let us recall some preliminaries. According to \cite{Bhatt_Mathew_fsmooth} a $p$-quasisyntomic ring $A$ is said to be \textbf{$F$-smooth} if two things happen: first that $\Prism_A\{ i\}$ is Nygaard complete and the fibre of the map $\mathcal{N}^i\Prism_A \rightarrow \overline{\Prism}_A\{i\}$ has $p$-complete tor-amplitude in degrees $\geq i+2$. The notion of $F$-smoothness is a non-noetherian extension of regularity as justified by \cite[Theorem 4.15]{Bhatt_Mathew_fsmooth}; it is also partly inspired by connectivity bounds expected out of the Segal conjecture for $THH$. 

The second author has proposed a definition for the dimension of an $F$-smooth ring which interacts well with TC-theoretic considerations. An $F$-smooth ring $A$ has \textbf{Nygaard dimension $\leq n$} (denoted by $\dim_{\mathcal{N}}A \leq n$) \cite[Definition 2.14]{riggenbach2023ktheorytruncatedpolynomials} if for all $i \in \Z$, we have that
\[
\mathcal{N}^{\geq i}\Prism_A \in D^{[0, n]}(\Z_p). 
\]
The Quillen-Lichtenbaum estimates that we will produce will depend on the Nygaard dimension of $A$. 

The last ingredient we need is a motivic filtration on $K(1)$-local $TC$ constructed by H. Kim \cite{kim}. This motivic filtration behaves as follows: for any (animated) ring $A$ the natural map $\TC(A;\Z_p) \rightarrow L_{K(1)}\TC(A)$ lifts to a multiplicative, filtered map
\[
F^{\star}_{\mot}\TC(A;\Z_p) \rightarrow F^{\star}_{\mot}L_{K(1)}\TC(A)
\]
such that, after taking graded pieces, we obtain a multiplicative graded map
\[
\Z_p(\star)^{\syn}(A^\wedge_p)[2\star] \rightarrow R\Gamma_{\et}(A^\wedge_p[\tfrac{1}{p}]; \Z_p(\star))[2\star],
\]
which agrees with the \'etale comparison map in prismatic theory by \cite[Corollary 3.36]{kim}. Therefore, we may appeal to \cite[Theorem 1.8]{Bhatt_Mathew_fsmooth} and note that for any $n \in \Z$ and any $k \geq 1$ the map on graded pieces
\[
\mathrm{gr}^{n}_{\mot}\TC(A;\Z_p)/p^k \rightarrow \mathrm{gr}^{n}_{\mot}L_{K(1)}\TC(A)/p^k
\]
is $n+1$-truncated. Our argument follows that of \cite[Theorem 6.13]{clausen2019hyperdescent} closely. 

\begin{theorem}
    Let $A$ be a $p$-torsion free F-smooth quasisyntomic ring with finite $\mathcal{N}$-dimension $\dim_{\mathcal{N}}(A)=d$. Then the map \[TC(A;\mathbb{F}_p)\to L_{K(1)}TC(A)/p\] is $d$-truncated.
\end{theorem}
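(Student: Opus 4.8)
The strategy is to compare the motivic filtrations on the two sides, in the manner of \cite[Theorem 6.13]{clausen2019hyperdescent}. Set $F:=\fib\bigl(\TC(A;\Z_p)\to L_{K(1)}\TC(A)\bigr)$; since $\TC(A;\FF_p)=\TC(A;\Z_p)/p$ and $L_{K(1)}\TC(A)$ is $p$-complete, the fibre of the map in the statement is $F/p$, so it suffices to prove $\pi_i(F/p)=0$ for $i>d$. I would equip $F$ with the motivic filtration that is the fibre of H.~Kim's filtered refinement $F^{\star}_{\mot}\TC(A;\Z_p)\to F^{\star}_{\mot}L_{K(1)}\TC(A)$ of the comparison map; it is complete and exhaustive, with graded pieces $\mathrm{gr}^{n}_{\mot}F=\fib\bigl(\Z_p(n)^{\syn}(A)[2n]\to R\Gamma_{\et}(A[\tfrac1p];\Z_p(n))[2n]\bigr)$. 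Reducing mod $p$ yields a complete, exhaustive filtration on $F/p$ with $\mathrm{gr}^{n}_{\mot}(F/p)=C_n[2n]$, where $C_n:=\fib\bigl(\Z_p(n)^{\syn}(A)/p\to R\Gamma_{\et}(A[\tfrac1p];\Z/p(n))\bigr)$.

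Everything then comes down to locating $C_n$. For a lower bound I would invoke the \'etale comparison theorem for $F$-smooth rings \cite[Theorem 1.8]{Bhatt_Mathew_fsmooth}: the integral map $\Z_p(n)^{\syn}(A)\to R\Gamma_{\et}(A[\tfrac1p];\Z_p(n))$ is an isomorphism on $H^{i}$ for $i\le n$ and injective on $H^{n+1}$, so its fibre lies in cohomological degrees $\ge n+2$; reducing mod $p$ then shows the mod-$p$ comparison is still an isomorphism on $H^{i}$ for $i\le n$, i.e.\ $C_n$ is concentrated in cohomological degrees $\ge n+1$ (and trivially in degrees $\ge -1$). For an upper bound, $\dim_{\mathcal N}(A)=d$ forces $\mathcal{N}^{\ge n}\Prism_A,\ \Prism_A\in D^{[0,d]}(\Z_p)$, hence $\Z_p(n)^{\syn}(A)=\fib\bigl(\mathcal{N}^{\ge n}\Prism_A\{n\}\to\Prism_A\{n\}\bigr)\in D^{[0,d+1]}(\Z_p)$, and in particular $\Z_p(n)^{\syn}(A)/p\in D^{\le d+1}$. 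The companion fact I would need is that $R\Gamma_{\et}(A[\tfrac1p];\Z/p(n))\in D^{[0,d+1]}$ for every $n$, i.e.\ that $\Spec(A[\tfrac1p])$ has \'etale $p$-cohomological dimension $\le d+1$; granting it, for $n\ge d+1$ both source and target of the mod-$p$ comparison lie in $D^{\le d+1}$ while the map is an isomorphism on $H^{i}$ for $i\le n$, which then covers all relevant degrees, so the map is an equivalence and $C_n=0$.

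With these estimates, $\mathrm{gr}^{n}_{\mot}(F/p)=C_n[2n]$ is concentrated in homotopy degrees $\le n-1$ when $n\ge 1$, in homotopy degrees $\le -1$ when $n\le 0$, and vanishes when $n\ge d+1$; so every graded piece is $d$-truncated and all graded pieces vanish above weight $d$. Since the motivic filtration on $F/p$ is complete, this vanishing gives $F^{\ge d+1}_{\mot}(F/p)=0$, so for each $m$ the stage $F^{\ge m}_{\mot}(F/p)$ is a finite iterated extension of the $d$-truncated spectra $\mathrm{gr}^{m}_{\mot}(F/p),\dots,\mathrm{gr}^{d}_{\mot}(F/p)$, hence is $d$-truncated; and by exhaustiveness $F/p=\colim_{m\to-\infty}F^{\ge m}_{\mot}(F/p)$, a filtered colimit of $d$-truncated spectra, is $d$-truncated.

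The step I expect to be the real work is the cohomological-dimension bound $\mathrm{cd}_p\bigl(\Spec A[\tfrac1p]\bigr)\le d+1$ that feeds the vanishing $C_n=0$ for $n\ge d+1$. I would extract it from the preceding inputs: the mod-$p$ comparison being an isomorphism on $H^{i}$ for $i\le n$ together with $\Z_p(n)^{\syn}(A)/p\in D^{\le d+1}$ gives $H^{i}_{\et}(A[\tfrac1p];\Z/p(n))=0$ for $d+1<i\le n$; since $\Z/p(n)\cong\Z/p(n')$ as \'etale sheaves whenever $n\equiv n'\pmod{p-1}$, every twist is realised by arbitrarily large $n$, so $H^{i}_{\et}(A[\tfrac1p];\Z/p(n))=0$ for all $n$ and all $i>d+1$, as desired. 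A secondary point is simply bookkeeping the shifts and the edge weights $n\in\{0,d\}$ carefully, so that the final truncation level comes out to exactly $d$ and not $d+O(1)$.
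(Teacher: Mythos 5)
Your proposal is correct in its conclusion and runs along the same basic lines as the paper's proof: induce the motivic filtration on the fibre, locate the graded pieces as fibres of the mod-$p$ syntomic-to-\'etale comparison, and combine the $F$-smooth comparison theorem with the Nygaard-dimension bound. The one genuine methodological difference is how you justify ignoring Tate twists in order to bound $\mathrm{cd}_p(\Spec A[\tfrac1p])$. The paper first observes that $\mathcal{F}=\fib(\TC(-;\F_p)\to L_{K(1)}\TC(-)/p)$ is a localizing invariant with transfers along finite \'etale maps, which lets it pass to $A[\zeta_p]$ (picking up $\mathcal{F}(A)$ as a retract for $p$ odd) and then identify $\mu_p^{\otimes n}\cong\F_p$ outright. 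You instead stay with $A$ and use the fact that $\mu_p^{\otimes(p-1)}\cong\F_p$ as an \'etale sheaf on any $\Z[\tfrac1p]$-scheme, so every twist is realised by arbitrarily large weight; taking $n\to\infty$ within each residue class mod $p-1$ then gives the same cohomological-dimension bound without needing to manufacture transfers. Your version is arguably cleaner since it avoids invoking the localizing-invariant formalism, at the mild cost of keeping track of twists.

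Two small off-by-one slips, neither of which damages the argument. First, if the integral fibre $\fib\bigl(\Z_p(n)^{\syn}(A)\to R\Gamma_{\et}(A[\tfrac1p];\Z_p(n))\bigr)$ lies in cohomological degrees $\ge n+2$, then its reduction mod $p$ lies in degrees $\ge n+1$, which means the mod-$p$ comparison is an isomorphism on $H^i$ only for $i\le n-1$ (and injective on $H^n$), not for $i\le n$ as you wrote. Consequently your vanishing $C_n=0$ holds for $n\ge d+2$, not $n\ge d+1$: at $n=d+1$ the comparison is iso up to $H^d$ and injective on $H^{d+1}$, so $C_{d+1}$ can have a nonzero $H^{d+2}$, namely $\coker\bigl(H^{d+1}(\F_p(d+1)^{\syn}(A))\to H^{d+1}_{\et}(A[\tfrac1p];\mu_p^{\otimes(d+1)})\bigr)$, exactly the potentially nonzero top group the paper isolates. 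Since $C_{d+1}[2(d+1)]$ is still concentrated in homotopy degree $d$, and $F^{\ge d+2}_{\mot}(F/p)=0$ by completeness, the iterated-extension argument goes through unchanged and the final bound of $d$-truncated is correct. The same correction propagates harmlessly through your cohomological-dimension argument (take $n'\ge i+1$ rather than $n'\ge i$).
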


\begin{proof}

Let $\mathcal{F}$ be the fibre of the functor $TC(-;\mathbb{F}_p) \rightarrow L_{K(1)}TC(-)/p$. Noting that this is a morphism of localizing invariants, we see that it admits pushforward along proper morphisms of finite tor-amplitude.
%

   This structure allows us to assume, without loss of generality, that $A$ has a $p^{th}$ root of unity since if $p=2$ then $-1\in A$ and if $p$ is odd then $\mathcal{F}(A)$ is a summand of $\mathcal{F}(A[\zeta_p])$ for any $p$-complete functor $\mathcal{F}$ with transfers (since we are working with mod-$p$ coefficients and the extension is coprime to $p$). Consequently we may ignore Tate twists: $R\Gamma_{\textrm{\'et}}(A[1/p];\mu_p^{\otimes n})\simeq R\Gamma_{\textrm{\'et}}(A[1/p];\mathbb{F}_p)$ for all $n\in \mathbb{Z}$. We then have that $\tau^{\leq n-1}R\Gamma_{\textrm{\'et}}(A[1/p];\mathbb{F}_p)\simeq \tau^{\leq n-1}\mathbb{F}_p(n)^{\mathrm{syn}}(A)$ which does not have any cohomology above degree $d+1$ by the assumption on Nygaard dimension. Since this is true for all $n$ it follows that $R\Gamma_{\textrm{\'et}}(A[1/p];\mathbb{F}_p)$ has cohomology in degrees $\leq d+1$.  

    It then follows that the map $\mathbb{F}_p(n)^{\mathrm{syn}}(A)\to R\Gamma_{\textrm{\'et}}(A[1/p];\mu_{p}^{\otimes n})$ is an equivalence when $n\geq d+2$. Inducing the motivic filtration on $\mathcal{F}$, we get that 
    \[
    \mathrm{gr}^n\mathcal{F}(A)\simeq \mathrm{fib}(\mathbb{F}_p(n)^{\mathrm{syn}}(A)\to R\Gamma_{\textrm{\'et}}(A[1/p];\mu_{p}^{\otimes n}))[2n].
    \] Consequently 
    \[
    \mathrm{gr}^n\mathcal{F}(A;\mathbb{Z}/p^k)=0 \qquad n\geq d+2,
    \] and the highest nonzero homotopy group of $\mathcal{F}(A)$ comes from 
    \begin{align*}\mathrm{gr}^{d+1}\mathcal{F}(A;\mathbb{F}_p) & \cong & H^{d+1}_{\textrm{\'et}}(A[1/p];\mu_{p}^{\otimes d+1})/H^{d+1}(\mathbb{F}_p(d+1)(A))[2(d+1)-(d+1)-1]\\ & \cong & H^{d+1}_{\textrm{\'et}}(A[1/p];\mu_{p}^{\otimes d+1})/H^{d+1}(A;\mathbb{F}_p(d+1))[d].
    \end{align*}
    The shift that appears in the last term tells us that the $\mathcal{F}$ has highest homotopy group in degree exactly $d$, whence the theorem is proved.
\end{proof}

\begin{remark}\label{rem:akhil} Notice that if we know generation by symbols, the last homotopy group also vanishes and we get that the map $TC(A;\mathbb{F}_p)\to L_{K(1)}TC(A;\mathbb{F}_p)$ is $(d-1)$-truncated. We would like to thank Akhil Mathew for pointing this out to us.
\end{remark}

\section{The Segal conjecture}\label{sec:segal}

In this Section we will prove the main results for the special case of $A=R\langle t_1,\ldots, t_d\rangle$. We will first prove in Subsection~\ref{ssec: coconnectivity of the frobenius} a version of the Segal conjecture for this class of rings. Some of this is covered in \cite{Mathew2021TR}, specifically the case when $R$ is $p$-torsion free \cite[Proposition 5.10]{Mathew2021TR}. Using this and an explicit formula for the topological negative cyclic and periodic homologies of $A$ we will be able to show the improved bound by hand in Subsection~\ref{ssec: the sharper bound for polynomial algebras}. 

\subsection{Coconnectivity of the Frobenius and the Segal conjecture}\label{ssec: coconnectivity of the frobenius}
In this section we will show that, in certain cases, the Segal conjecture survives taking a polynomial extension.  In other words, if the cyclotomic Frobenius is truncated for $\thh(A;\ZZ_p)$, then it will remain so for $\thh(A[t];\ZZ_p)$ with slightly worse bounds. We first recall that as a functor $\THH:\CAlg\to \cycsp$ is symmetric monoidal. In particular we have that for any $A \in \CAlg$,  

\begin{equation}\label{eq:kunneth}
\THH(A[t])\simeq \THH(A)\otimes \THH(\mathbb{S}[t])
\end{equation}
 and the Frobenius is given by $l\circ (\phi_p^{\THH(A)}\otimes \phi_p^{\THH(\mathbb{S}[t])})$ where $l$ is the map witnessing the lax monoidal structure of $(-)^{tC_p}$. The term $\THH(\mathbb{S}[t])$, and its attendant cyclotomic structure, has been known to experts for several years, and was recently carefully computed by \cite{mccandless2021curves} as part of their development of topological resriction homology in the framework set up by Nikolaus and Scholze in \cite{nikolaus2017topological}. 

As a $\mathbb{T}$-equivariant spectrum we have an equivalence 
\begin{equation}\label{eq:thh-st}
\THH(\mathbb{S}[t])\simeq \mathbb{S}\oplus \bigoplus_{j\geq 1} \Sigma^{\infty}_+\TT/C_j
\end{equation}
and the Frobenius is given by the sum of the composites:
\begin{equation}\label{eq:frob-cj}
\Sigma^{\infty}_+\TT/C_j\simeq \Sigma^{\infty}_+ \left(\TT/C_{pj}\right)^{C_p}\to\Sigma^{\infty}_+ \left(\TT/C_{pj}\right)^{hC_p}\to \left(\Sigma^{\infty}_+ \TT/C_{pj}\right)^{hC_p}\to\left(\Sigma^{\infty}_+ \TT/C_{pj}\right)^{tC_p} \to \THH(\mathbb{S}[t])^{tC_p}.
\end{equation}
Combining~\eqref{eq:kunneth} and~\eqref{eq:thh-st} we get an equivalence of $\mathbb{T}$-equivariant spectra
\[
\THH(A[t]) \simeq \THH(A) \oplus \bigoplus_{j\geq 1} \THH(A) \otimes \Sigma^{\infty}_+\TT/C_j.
\]
The cyclotomic Frobenius seems a little unwieldy as it is a tensor product of both the Frobenius on $\THH(A)$ and~\eqref{eq:frob-cj}. To facilitate our computations it will be helpful to prove that, after $p$-completion, the cyclotomic Frobenius on any individual summand only depends on the cyclotomic Frobenius on $\THH(A)$. This is made precise by the commutativity of the following diagram:
\begin{equation}\label{eq:termwise-frob}
\begin{tikzcd}
\THH(A) \otimes \Sigma^{\infty}_+\TT/C_j \ar[swap]{d}{\phi_p^{\THH(A)} \otimes\phi_p^{\THH(\mathbb{S}[t])}} \ar[bend left]{ddr}{\phi_p^{\THH(A)} \otimes \id} & \\
\THH(A)^{tC_p} \otimes (\Sigma^{\infty}_+\TT/C_{pj})^{tC_p} \ar[swap]{d}{l} & \\
(\THH(A) \otimes (\Sigma^{\infty}_+\TT/C_{pj})^{tC_p} & \THH(A)^{tC_p} \otimes \Sigma^{\infty}_+\TT/C_j \ar{l}{\id \otimes \widetilde{\phi}}.\\
\end{tikzcd}
\end{equation}
Its commutativity follows from \cite[Lemma 2]{hesselholt2019algebraic}; note that it still commutes after replacing $\THH(A)$ with its $p$-completion $\THH(A; \Z_p)$ in which case, the bottom map is an equivalence by the classical Segal conjecture. Consequently, up to equivalence, the Frobenius on $\THH(A[t];\ZZ_p)$ is given by the sum of the maps \[\thh(A;\ZZ_p)\otimes \Sigma^{\infty}_+\TT/C_j \xrightarrow{\phi_p^{\THH(A;\ZZ_p)}\otimes id}\thh(A;\ZZ_p)^{tC_p}\otimes \Sigma^{\infty}_+\TT/C_j\]

Next, we will need to understand how the Tate construction interacts with our direct sum decomposition; this is the technical heart of the paper. A priori there is no reason to expect this interaction to be nice, but it turns out to almost commute with the sum up to a completion.
\begin{lemma}\label{lem: fixed points direct sum commute kinda}
Fix a perfectoid ring $R$. Let $A$ be a formally smooth $R$-algebra or, more generally, any $R$-algebra such that $\hh(A/R)^\wedge_p$ is concentrated in finitely many degrees. Then for any set $I$, pointed $\TT$-spaces $B_i$ with no cells above dimension $k$ for some $k$(independent of $i$), and any $G=C_j$ or $\TT$ the maps \[\left(\widehat{\bigoplus}_{i\in I}\left(\thh(A;\ZZ_p)\otimes B_i\right)^{hG}\right)^\wedge_{p}\to \left(\left(\bigoplus_{i\in I}\thh(A;\ZZ_p)\otimes B_i\right)^{hG}\right)^\wedge_p\]
and 
\[\left(\bigoplus_{i\in I}\left(\thh(A;\ZZ_p)\otimes B_i\right)^{tG}\right)^\wedge_{(p,\xi)}\to \left(\left(\bigoplus_{i\in I}\thh(A;\ZZ_p)\otimes B_i\right)^{tG}\right)^\wedge_p\]
are equivalences. The notation $\widehat{\bigoplus}_{i\in I}$ denotes the Nygaard completion of the direct sum, or equivalently the $u$-adic completion for $u\in \TC^-_2(R;\ZZ_p)$.
\end{lemma}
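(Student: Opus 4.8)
The plan is to reduce both statements to a single computation about how $p$-completion interacts with the relevant (co)limits, exploiting the hypothesis that $\hh(A/R)^\wedge_p$—equivalently $\thh(A;\Z_p)$ as a $\thh(R;\Z_p)$-module—is perfect, i.e. concentrated in finitely many homological degrees. First I would record the structural input: since $R$ is perfectoid, $\thh(R;\Z_p) \simeq R[u]$ with $|u|=2$ polynomial, and $\thh(A;\Z_p)$ is a perfect $R[u]$-module; hence each $\thh(A;\Z_p)\otimes B_i$, with $B_i$ having no cells above dimension $k$, is $(-c)$-coconnective-to-$(k)$-connective-type bounded uniformly in $i$ (bounded above by $k$, and bounded below once we remember $\thh(A;\Z_p)$ is connective). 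The key point is that for the homotopy fixed points $(-)^{hG}$ with $G = C_j$ or $\TT$, the spectral sequence has a horizontal vanishing line only after a completion—this is exactly the Nygaard/$u$-adic completion appearing in the statement—while for a \emph{uniformly bounded-above} family the sum commutes with $(-)^{hG}$ on the nose before completion.

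The main steps, in order. (1) Handle $G = C_j$: here $(-)^{hC_j}$ and $(-)^{tC_j}$ have cohomological dimension controlled by $|BC_j|$ which is infinite, but after $p$-completion the relevant Tate/ homotopy-fixed-point spectral sequences for a module that is perfect over $R[u]$ degenerate to finitely-generated-over-$R[u]$-modules in each degree, so the obstruction to commuting with $\bigoplus_{i\in I}$ is precisely failure of the sum to be $u$-complete; completing fixes it. Concretely, I would show $(\bigoplus_i \thh(A;\Z_p)\otimes B_i)^{hC_p}$ computes, degreewise, a colimit of the finite-stage fixed points, and compare with the termwise $\widehat\bigoplus_i(\thh(A;\Z_p)\otimes B_i)^{hC_p}$ via the conditional-convergence criterion of Boardman, using that $\thh(A;\Z_p)\otimes B_i$ is $k$-truncated-relative-to-$R[u]$ to get the requisite $\lim^1$-vanishing after $p$- (resp. $(p,\xi)$-) completion. (2) Bootstrap from $C_p$ to all $C_{p^n}$ and then to $\TT$ by the standard fibre sequences $(-)^{hC_{p^{n+1}}} \to (-)^{hC_{p^n}}$ (using $BC_{p^{n+1}} \to BC_{p^n}$) and $X^{h\TT} \simeq \lim_n X^{hC_{p^n}}$ after $p$-completion (Nikolaus–Scholze), together with $X^{t\TT}$ expressed via $X^{tC_p}$ and $({-})^{h\TT}$; the completions are compatible with these limits because a limit of $u$-complete (resp. $(p,\xi)$-complete) spectra is again complete. (3) For the $C_j$ with $j$ not a power of $p$: since we are $p$-completing, $\Sigma^\infty_+\TT/C_j$ with the residual action decomposes and the non-$p$-part of $C_j$ acts invertibly, reducing to the $p$-power case.

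The hard part will be step (1): proving that, after $(p,\xi)$-completion, the canonical map from the (completed) direct sum of Tate constructions to the Tate construction of the direct sum is an equivalence. The subtlety is that $(-)^{tC_p}$ is an infinite limit \emph{and} an infinite colimit simultaneously (via the Tate spectral sequence), so neither ``colimits commute with the colimit part'' nor ``the limit part is finite'' applies directly; the resolution is that on $\thh(A;\Z_p)$-modules that are perfect over $R[u]$, the Tate construction $(-)^{tC_p}$ becomes, after $p$-completion, a \emph{bounded} operation in a suitable $t$-structure twisted by $u$—this is where finiteness of $\hh(A/R)^\wedge_p$ is essential—so the Tate spectral sequence has the horizontal vanishing line that makes the interchange legitimate once we pass to the $u$-adic (equivalently Nygaard) completion. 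I would isolate this as the technical core, proving it first for $A = R$ (where $\thh(R;\Z_p)^{tC_p}$ is explicitly $R[\sigma^{\pm 1}]^\wedge_{(p,\xi)}$-ish and the claim is a direct check on generators) and then tensoring up along the perfect $R[u]$-module $\thh(A;\Z_p)$, which commutes with both $(-)^{tC_p}$ (as it is a perfect, hence dualizable, module map) and with the completions.
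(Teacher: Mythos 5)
The proposal captures the right high-level intuition (boundedness of $\hh(A/R)^\wedge_p$ is what makes the interchange work, and the $u$-adic/Nygaard completion accounts for the discrepancy), but the mechanism you propose for the key step fails, and your framing conflates two different finiteness conditions.

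First, the conflation: you say $\hh(A/R)^\wedge_p$ being ``concentrated in finitely many degrees'' is ``equivalently'' $\thh(A;\Z_p)$ being a perfect $\thh(R;\Z_p)\simeq R[u]$-module. This is false. Already for $A=R[t]$ (formally smooth of relative dimension one), $\pi_0\thh(A;\Z_p)=R\langle t\rangle$ is not finitely generated over $R$, so $\thh(A;\Z_p)$ cannot be perfect over $R[u]$. Boundedness of $\hh(A/R)^\wedge_p$ is a much weaker condition than perfection and is the \emph{only} hypothesis. This matters because your proposed resolution of the ``hard part'' (the Tate case) is to prove the statement for $A=R$ and then ``tensor up along the perfect $R[u]$-module $\thh(A;\Z_p)$, which commutes with $(-)^{tC_p}$ \ldots as it is a perfect, hence dualizable, module''. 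Since $\thh(A;\Z_p)$ is not perfect (and certainly not dualizable) over $\thh(R;\Z_p)$, this step does not go through; tensoring with a merely bounded-mod-$u$ module does not in general commute with the Tate construction.

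The paper's argument avoids this entirely and is considerably more direct. It uses the $\TT$-equivariant cofiber sequence $\Sigma^2\thh(A;\Z_p)\otimes B_i\xrightarrow{u}\thh(A;\Z_p)\otimes B_i\to\hh(A/R;\Z_p)\otimes B_i$ to see that each $(\thh(A;\Z_p)\otimes B_i)/u^n$ is bounded, with bounds uniform in $i$. Because $u$-multiplication has increasing connectivity, one has $\bigoplus_i\thh(A;\Z_p)\otimes B_i\simeq\lim_n\bigoplus_i(\thh(A;\Z_p)\otimes B_i)/u^n$ as Borel $\TT$-spectra. Then $(-)^{hG}$, $(-)_{hG}$, and hence $(-)^{tG}$ are moved past the inverse limit and past the direct sum of uniformly bounded spectra in one stroke (the interchange with the sum being exactly the uniform-coconnectivity lemma you allude to, but applied to the $u^n$-reductions rather than to the honest $\thh\otimes B_i$'s, which are not bounded). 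No case split on $G$, no Boardman convergence argument, no passage through $C_p\to C_{p^n}\to\TT$ is needed; everything works simultaneously for $G=C_j$ or $\TT$. Finally, matching the $u$-adic and $(p,\xi)$-adic completions on the Tate side uses $\mathrm{can}(u)=\xi\sigma$, not a computation of $\thh(R;\Z_p)^{tC_p}$ on generators. I would recommend abandoning the perfect-module/dualizability route and reorganizing around the $u$-adic tower of $\thh\otimes B_i$.
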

\begin{proof}
From \cite[Theorem 6.7]{bhatt2019topological} we have cofiber sequences \[\Sigma^2\thh(A;\ZZ_p)\otimes B_i\xrightarrow{u}\thh(A;\ZZ_p)\otimes B_i\to \hh(A/R;\ZZ_p)\otimes B_i\] and by our hypotheses $\hh(A/R;\ZZ_p)\otimes B_i$ are uniformly bounded spectra. Inductively it then follows that the spectra \[\left(\thh(A;\ZZ_p)\otimes B_i\right)/u^n:=\mathrm{cof}\left(\Sigma^{2n}\thh(A;\ZZ_p)\otimes B_i\xrightarrow{u^n}\thh(A;\ZZ_p)\otimes B_i\right)\] are bounded uniformly in $i$. Since each $B_i$ are connective we also have that \[\lim_n\bigoplus_{i\in I}\left(\thh(A;\ZZ_p)\otimes B_i\right)/u^n\simeq \bigoplus_{i\in I}\thh(A;\ZZ_p)\otimes B_i\] since on any given homotopy groups the inverse limit stabilizes. This is a description as Borel $\TT$ spectra, so we get equivalences
\begin{align*}
    \left(\bigoplus_{i\in I}\thh(A;\ZZ_p)\otimes B_i\right)_{hG} &\simeq \left(\lim_n\bigoplus_{i\in I}\thh(A;\ZZ_p)/u^n\otimes B_i\right)_{hG}\\
                                                                &\simeq \lim_n\bigoplus_{i\in I}\left(\thh(A;\ZZ_p)/u^n \otimes B_i\right)_{hG}
\end{align*}
where the second equivalence comes from the fact that $\bigoplus_{i\in I}\thh(A;\ZZ_p)\otimes B_i\to \bigoplus_{i\in I}\thh(A;\ZZ_p)/u^n\otimes B_i$ is $(2n-1)$-connected and that homotopy orbits can only increase this. We also have equivalences 
\begin{align*}
        \left(\bigoplus_{i\in I}\thh(A;\ZZ_p)\otimes B_i\right)^{hG} &\simeq \left(\lim_n\bigoplus_{i\in I}\thh(A;\ZZ_p)/u^n\otimes B_i\right)^{hG}\\
                                                                &\simeq \lim_n\bigoplus_{i\in I}\left(\thh(A;\ZZ_p)/u^n \otimes B_i\right)^{hG}
\end{align*}
where the second equivalence comes from the uniform bound on $\thh(A;\ZZ_p)/u^n\otimes B_i$ and \cite[Lemma 3.1.3]{MR4326928}. The norm map will be the limit of the norm maps and so the same description holds for the Tate construction as well. 

To see the statement for the Tate construction it then remains to see that the $u$-adic and $\xi$-adic completions agree. To this end note that we have equivalences \[\left(\thh(A;\ZZ_p)/u^n \otimes B_i\right)^{tG}\simeq \left(\thh(A;\ZZ_p)\otimes B_i\right)^{tG}/u^n\] where the multiplication by $u^n$ comes from the lax-monoidal structure of $(-)^{tG}$ and the fact that $can: \TC^{-}(R;\ZZ_p)\to \mathrm{TP}(R;\ZZ_p)$ is an $\mathbb{E}_\infty$-ring map. In particular $u^n$ acts via $can(u^n)=\xi^n\sigma^n$ by \cite[Proposition 6.3]{bhatt2019topological}. In particular \[\left(\thh(A;\ZZ_p)\otimes B_i\right)^{tG}/u^n\simeq \left(\thh(A;\ZZ_p)\otimes B_i\right)^{tG}/\xi^n\] and the result follows.
\end{proof}

We are now able to prove the main result of this section. It states that the Segal conjecture survives taking a polynomial extension, at least at the cost of one extra homotopy group. 

\begin{proposition}\label{prop: coconnectivity of frobenius}
Fix a perfectoid ring $R$. Let $A$ be an $R$-algebra such that $\hh(A/R)^\wedge_p$ is concentrated in finitely many degrees. Suppose that $\phi_p:\thh(A;\ZZ_p)\to \thh(A;\ZZ_p)^{tC_p}$ is $k$-truncated and that the $k^{\textrm{th}}$ homotopy group of the fiber has bounded $p^\infty$-torsion. Then $\phi_p:\thh(A[t];\ZZ_p)\to \thh(A[t];\ZZ_p)^{tC_p}$ is $(k+1)$-truncated. The statement is still true with $\ZZ_p$ replaced with $\FF_p$.
\end{proposition}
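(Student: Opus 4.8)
The plan is to leverage the Künneth decomposition together with the termwise description of the Frobenius established in~\eqref{eq:termwise-frob} and then feed everything into Lemma~\ref{lem: fixed points direct sum commute kinda}. Recall that after $p$-completion we have an equivalence of $\TT$-equivariant spectra
\[
\thh(A[t];\ZZ_p) \simeq \thh(A;\ZZ_p) \oplus \bigoplus_{j \geq 1} \thh(A;\ZZ_p) \otimes \Sigma^\infty_+ \TT/C_j,
\]
and, crucially, the cyclotomic Frobenius respects this summand decomposition: on the $j$-th summand it is the composite of $\phi_p^{\THH(A;\ZZ_p)} \otimes \id$ (landing in $\thh(A;\ZZ_p)^{tC_p} \otimes \Sigma^\infty_+\TT/C_j$) with the structure map identifying $\Sigma^\infty_+\TT/C_j$ with $(\Sigma^\infty_+\TT/C_{pj})^{tC_p}$ via the Segal conjecture for spaces, followed by the comparison $\thh(A;\ZZ_p)^{tC_p} \otimes (\Sigma^\infty_+\TT/C_{pj})^{tC_p} \to (\thh(A;\ZZ_p) \otimes \Sigma^\infty_+\TT/C_{pj})^{tC_p}$. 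So the fiber of $\phi_p$ on $\thh(A[t];\ZZ_p)$ decomposes (up to the completion issues below) as a sum of the fiber of $\phi_p$ on $\thh(A;\ZZ_p)$, tensored with $\Sigma^\infty_+\TT/C_j$, plus a contribution coming from the comparison map failing to be an equivalence before completion.

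The key step is to compute the target $\thh(A[t];\ZZ_p)^{tC_p}$. The naive guess $\big(\bigoplus_j \thh(A;\ZZ_p) \otimes \Sigma^\infty_+\TT/C_j\big)^{tC_p}$ does not commute with the direct sum on the nose, but Lemma~\ref{lem: fixed points direct sum commute kinda} (applied with $G = C_p$, $B_i = \Sigma^\infty_+\TT/C_j$, which are $\TT$-spaces with cells only in dimensions $0$ and $1$, hence uniformly bounded; and using that $\hh(A/R)^\wedge_p$ is concentrated in finitely many degrees by hypothesis) tells us that after $p$-completion — equivalently after $\xi$-completion, equivalently after $u$-completion — the Tate construction does commute with $\bigoplus_j$. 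Thus
\[
\thh(A[t];\ZZ_p)^{tC_p} \simeq \Big(\thh(A;\ZZ_p)^{tC_p} \,\widehat{\oplus}\, \widehat{\bigoplus_{j\geq 1}} \big(\thh(A;\ZZ_p) \otimes \Sigma^\infty_+\TT/C_j\big)^{tC_p}\Big){}^\wedge_p.
\]
Now on each summand, $(\Sigma^\infty_+\TT/C_j)^{C_p} \simeq \Sigma^\infty_+\TT/C_{j}$-type identifications and the classical Segal conjecture make $\thh(A;\ZZ_p)^{tC_p} \otimes \Sigma^\infty_+\TT/C_{j/p}$ (when $p \mid j$) appear, matching the source of the Frobenius; the summands with $p \nmid j$ contribute the extra homotopy. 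So after identifying source and target summand-by-summand, the fiber of $\phi_p$ on $\thh(A[t];\ZZ_p)$ is, up to Nygaard/$p$-completion, a direct sum of shifts of the fiber $F$ of $\phi_p$ on $\thh(A;\ZZ_p)$ tensored with the $\Sigma^\infty_+\TT/C_j$'s. Since each $\Sigma^\infty_+\TT/C_j$ is connective with cells in dimensions $0,1$, tensoring $F$ (which lives in degrees $\leq k$ with bounded $p^\infty$-torsion in its top group) with $\Sigma^\infty_+\TT/C_j$ and then $p$-completing can raise the top degree by exactly one — the $\Tor^{\ZZ_p}_1$ against the $1$-cell picks up a contribution in degree $k+1$, but no higher, precisely because of the bounded-torsion hypothesis which guarantees the $\lim^1$/derived-completion corrections stay in degree $\leq k+1$. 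Taking the (completed) direct sum over $j$ preserves the degree bound, so $\phi_p$ on $\thh(A[t];\ZZ_p)$ is $(k+1)$-truncated.

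The main obstacle I expect is bookkeeping the completions carefully: the decomposition of the fiber is only valid after $p$-completion, and Nygaard completion of an infinite direct sum can in principle introduce homotopy in unexpected degrees, so one must check that $\widehat{\bigoplus_j}$ of uniformly bounded spectra stays bounded (by exactly the same bound, since the completion is a limit of truncations of the partial sums and the transition maps are highly connected) — this is essentially the content already extracted in the proof of Lemma~\ref{lem: fixed points direct sum commute kinda}, so it should go through. The second delicate point is verifying that the bounded $p^\infty$-torsion in $\pi_k F$ really does prevent leakage into degree $k+2$ after tensoring with the $1$-cell and $p$-completing: this is where one uses that $\otimes \Sigma^\infty_+\TT/C_j$ is a two-term extension and that derived $p$-completion of a bounded-torsion module is again bounded in the same range. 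For the $\FF_p$ statement, one runs the identical argument with $\thh(A;\FF_p) = \thh(A;\ZZ_p)/p$, noting that Lemma~\ref{lem: fixed points direct sum commute kinda} and all the structural inputs reduce mod $p$, and the bounded-torsion hypothesis is automatic mod $p$.
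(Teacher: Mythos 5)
Your high-level outline --- the K\"unneth decomposition, the termwise description of the Frobenius via~\eqref{eq:termwise-frob}, and Lemma~\ref{lem: fixed points direct sum commute kinda} for the target --- matches the paper's strategy, but you treat the $p$-adic, $\xi$-adic, and Nygaard completions as interchangeable at exactly the one place where this is not free, and where the paper does real work. Lemma~\ref{lem: fixed points direct sum commute kinda} says only that the Tate construction commutes with the direct sum \emph{after} $(p,\xi)$-adic completion; it does not say that this agrees with the bare $p$-completion. The paper's proof devotes a paragraph to proving precisely this: $\big(\bigoplus_{j\geq 1}(\thh(A;\ZZ_p)\otimes\Sigma^\infty_+\TT/C_j)^{tC_p}\big)^\wedge_p$ is \emph{already} $\xi$-adically complete, by reducing modulo $p$ and noting that the result is $\phi(\xi)$-torsion as a module over $\thh(R;\ZZ_p)^{tC_p}\simeq \mathrm{TP}(R;\ZZ_p)/\phi(\xi)$, using \cite[Proposition 6.4]{bhatt2019topological}. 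Remark~\ref{rem:key} singles this out as the key point of the whole argument. Without it, the fiber of $\phi_p$ on $\thh(A[t];\ZZ_p)$ is \emph{not} the $p$-completion of $\mathcal{F}\oplus\bigoplus_j\mathcal{F}\otimes\Sigma^\infty_+\TT/C_j$; it fits in a fiber sequence with that spectrum and an extra term measuring the failure of $\xi$-completeness of the $p$-completed direct sum of Tate constructions, which is unbounded a priori.

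Your proposed patch --- that Nygaard completion of a direct sum of uniformly bounded spectra stays bounded --- does not close this gap: the summands $(\thh(A;\ZZ_p)\otimes\Sigma^\infty_+\TT/C_j)^{tC_p}$ in the target are Tate constructions, hence $2$-periodic and unbounded, so the heuristic does not apply to them; and you cannot apply it to the fiber without first knowing the fiber decomposes termwise, which is what is in question. Two smaller slips: the target summands with $p\nmid j$ do not ``contribute the extra homotopy'' --- they vanish, since $C_p$ acts freely on $\TT/C_j$ when $p\nmid j$ and the Tate construction of an induced-free spectrum is zero; and the invocation of $\mathrm{Tor}^{\ZZ_p}_1$ is misplaced --- the degree shift by one comes simply from the stable splitting $\Sigma^\infty_+\TT/C_j\simeq\mathbb{S}\oplus\Sigma\mathbb{S}$, while the bounded-$p^\infty$-torsion hypothesis serves only afterwards to kill the $\Hom(\ZZ[1/p]/\ZZ,\pi_{k+1})$-term that would otherwise let $p$-completion produce a nonzero $\pi_{k+2}$.
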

\begin{proof}
From Lemma~\ref{lem: fixed points direct sum commute kinda} and the discussion preceeding it we have that \[\thh(A[t];\ZZ_p)^{tC_p}\simeq \thh(A;\ZZ_p)^{tC_p}\oplus \left(\bigoplus_{j\geq 1}\left(\thh(A;\ZZ_p)\otimes \Sigma^\infty_+\TT/C_j\right)^{tC_p}\right)^{\wedge}_{(p,\xi)}\] and on each summand of the source the Frobenius is given by \[\thh(A;\ZZ_p)\otimes \Sigma^\infty_+\TT/C_j\xrightarrow[]{\phi_p\otimes id}\thh(A;\ZZ_p)^{tC_p}\otimes \Sigma^\infty_+\TT/C_j\] followed by an equivalence $\thh(A;\ZZ_p)^{tC_p}\otimes \Sigma^\infty_+\TT/C_j\simeq \left(\thh(A;\ZZ_p)\otimes \Sigma^\infty_+\TT/C_{pj}\right)^{tC_p}$ and then the inclusion of this factor into $\thh(A[t];\ZZ_p)^{tC_p}$. 

We claim that $\left(\bigoplus_{j\geq 1}\left(\thh(A;\ZZ_p)\otimes \Sigma^\infty_+\TT/C_j\right)^{tC_p}\right)^\wedge_p$ is already $\xi$-adically complete. For this it is enough to show that each $\left(\bigoplus_{j\geq 1}\left(\thh(A;\ZZ_p)\otimes \Sigma^\infty_+\TT/C_j\right)^{tC_p}\right)/p^n$ is already $\xi$-adically complete, but then by induction it is enough to show that $\left(\bigoplus_{j\geq 1}\left(\thh(A;\ZZ_p)\otimes \Sigma^\infty_+\TT/C_j\right)^{tC_p}\right)/p$ is $\xi$-adically complete. For this we have that $\phi(\xi):=\xi^p+p\delta(\xi)$ and so being $\xi$-adially complete is equivalent to being $\phi(\xi)$-adically complete modulo $p$. In fact $\left(\bigoplus_{j\geq 1}\left(\thh(A;\ZZ_p)\otimes \Sigma^\infty_+\TT/C_j\right)^{tC_p}\right)/p$ is $\phi(\xi)$ torsion as a $\thh(R;\ZZ_p)^{tC_p}\simeq \mathrm{TP}(R;\ZZ_p)/\phi(\xi)$-module by \cite[Proposition 6.4]{bhatt2019topological}. 

Let $\mathcal{F}$ be the fiber of the Frobenius $\phi_p:\thh(A;\ZZ_p)\to \thh(A;\ZZ_p)^{tC_p}$. We then have that the fiber of the Frobenius $\phi_p:\thh(A[t];\ZZ_p)\to \thh(A[t];\ZZ_p)^{tC_p}$ is given by the $p$-adic completion of the spectrum \[\mathcal{F}\oplus \bigoplus_{j\geq 1}\left(\mathcal{F}\otimes \Sigma^{\infty}_+\TT/C_j\right)\] up to equivalence, because we know that the Frobenius is computed termwise by the commutativity of~\eqref{eq:termwise-frob}. By assumption this will not have homotopy above degree $k+1$, and in degree $k+1$ it will have bounded $p^\infty$ torsion. Consequently the $p$-completion will also not have homotopy above degree $k+1$ as desired. The statement mod $p$ is similar.
\end{proof}

\begin{remark}\label{rem:key} The key point of Proposition~\ref{prop: coconnectivity of frobenius}, as already observed in \cite{riggenbach2022cusps}, is that the $p$-completion of the big direct sum is $\xi$-adically complete. This lets us compute the fibre on polynomial extensions without additional $\xi$-completion, leading to stability  of the Segal conjecture under poiynomial extensions.
\end{remark}

One consequence of this is the following result.

\begin{corollary}
For each $d\geq 0$ and perfectoid ring $R$, the Frobenius \[\phi_p:\thh(R[t_1,\ldots, t_d];\ZZ_p)\to \thh(R[t_1,\ldots, t_d];\ZZ_p)^{tC_p}\] is $(d-3)$-truncated. If $R$ is $p$-torsion free then the Frobenius mod $p$ is still $(d-3)$-truncated.
\end{corollary}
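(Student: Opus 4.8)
The plan is to induct on $d$, using Proposition~\ref{prop: coconnectivity of frobenius} to pass from $d$ variables to $d+1$. So first I would establish the base case $d=0$: for a perfectoid ring $R$, the Segal conjecture is known (due to Bhatt--Morrow--Scholze, or see \cite{Mathew2021TR}), and one can check that the cyclotomic Frobenius $\phi_p: \thh(R;\ZZ_p) \to \thh(R;\ZZ_p)^{tC_p}$ is in fact an equivalence in non-negative degrees; being careful about negative degrees, the relevant statement is that the fibre is concentrated in degrees $\leq -3$ (recall $\thh(R;\ZZ_p)$ has homotopy only in non-negative even degrees, while $\thh(R;\ZZ_p)^{tC_p} \simeq \TP(R;\ZZ_p)/\phi(\xi)$ is $2$-periodic), so that $\phi_p$ is $(-3)$-truncated. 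This matches the claimed bound $d - 3$ at $d = 0$.

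For the inductive step, suppose the Frobenius $\phi_p: \thh(R[t_1,\ldots,t_d];\ZZ_p) \to \thh(R[t_1,\ldots,t_d];\ZZ_p)^{tC_p}$ is $(d-3)$-truncated. I would like to apply Proposition~\ref{prop: coconnectivity of frobenius} with $A = R[t_1,\ldots,t_d]$ and the polynomial variable being $t_{d+1}$. This requires two hypotheses to be verified: first, that $\hh(A/R)^\wedge_p$ is concentrated in finitely many degrees --- this holds because $R[t_1,\ldots,t_d]$ is smooth over $R$ of relative dimension $d$, so its $p$-complete Hochschild homology (equivalently its de Rham--type complex) lives in degrees $0$ through $d$; and second, that the $k$-th (here $k = d-3$) homotopy group of the fibre $\mathcal{F}$ has bounded $p^\infty$-torsion. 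The output of the Proposition is then that $\phi_p$ for $R[t_1,\ldots,t_{d+1}]$ is $(d-2)$-truncated, which is exactly $(d+1) - 3$, completing the induction.

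The main obstacle is the bounded $p^\infty$-torsion condition on the top homotopy group of the fibre. For the induction to run smoothly I need to carry this as part of the inductive hypothesis, not just the truncation bound: so the statement to prove by induction should be ``$\phi_p$ for $R[t_1,\ldots,t_d]$ is $(d-3)$-truncated \emph{and} the $(d-3)$-rd homotopy group of its fibre has bounded $p^\infty$-torsion.'' I would verify this torsion-boundedness by tracking the fibre explicitly: by Proposition~\ref{prop: coconnectivity of frobenius}'s proof, the fibre for $d+1$ variables is the $p$-completion of $\mathcal{F}_d \oplus \bigoplus_{j\geq 1}(\mathcal{F}_d \otimes \Sigma^\infty_+ \TT/C_j)$ where $\mathcal{F}_d$ is the fibre for $d$ variables; its top homotopy group (degree $d-2$) receives contributions from $\pi_{d-2}$ and $\pi_{d-3}$ of $\mathcal{F}_d$ (the latter via the $1$-cells of $\TT/C_j$), and one checks these remain boundedly torsion after $p$-completion and assembling the sum --- here the uniform boundedness built into Lemma~\ref{lem: fixed points direct sum commute kinda} is what keeps the torsion under control. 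For the base case one simply notes $\pi_{-3}\mathcal{F} = 0$ (it is concentrated in even degrees), so the torsion hypothesis is vacuous.

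For the mod $p$ statement when $R$ is $p$-torsion free, I would run the identical induction using the final sentence of Proposition~\ref{prop: coconnectivity of frobenius} (``The statement is still true with $\ZZ_p$ replaced with $\FF_p$''); the base case is the mod $p$ Segal conjecture for $p$-torsion free perfectoid $R$, and here the torsion hypothesis is automatic since everything is already $p$-torsion. One subtlety worth flagging is whether the hypotheses of Proposition~\ref{prop: coconnectivity of frobenius} are stated for $\ZZ_p$-coefficients or whether the mod $p$ version needs its own hypothesis-checking; in either case the relevant inputs ($\hh(R[t_1,\ldots,t_d]/R)$ bounded, base-case Segal) are available, so the argument goes through verbatim.
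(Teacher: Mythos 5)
Your proposal follows the same route as the paper: induction on $d$, with the base case $d=0$ supplied by the Segal conjecture for perfectoid rings \cite[Proposition~6.4]{bhatt2019topological} and the inductive step by Proposition~\ref{prop: coconnectivity of frobenius}. You are also right to flag that the Proposition requires not only a truncation bound on $\mathrm{fib}(\phi_p)$ but also bounded $p^\infty$-torsion on its top nonzero homotopy group, and that this hypothesis must be carried through the induction; the paper's one-line proof does not make this explicit.

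However, your verification of the base case contains a concrete mistake. You assert that $\pi_{-3}\mathcal{F}=0$ and that $\mathcal{F}$ is concentrated in even degrees. In fact $\THH(R;\ZZ_p)_*$ is $R$ in non-negative even degrees and $0$ otherwise, $\THH(R;\ZZ_p)^{tC_p}_*$ is $R$ in every even degree and $0$ in odd degrees, and $\phi_p$ is an isomorphism in degrees $\geq 0$. The long exact sequence then gives $\pi_n(\mathcal{F})\cong R$ for odd $n\leq -3$ and $\pi_n(\mathcal{F})=0$ otherwise: the fibre sits in odd negative degrees, and $\pi_{-3}(\mathcal{F})\cong R$, which is generically nonzero. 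The torsion hypothesis at $k=-3$ is therefore not vacuous; it nevertheless holds because perfectoid rings have bounded $p^\infty$-torsion (see \cite{Bhatt2018Integral}), so the induction can still start, but the justification you gave must be replaced by this observation. Your sketch of how the torsion bound propagates through the inductive step --- using $\Sigma^\infty_+\TT/C_j\simeq \mathbb{S}\oplus\Sigma\mathbb{S}$ so that $\pi_{k+1}$ of the uncompleted sum consists of copies of $\pi_k(\mathcal{F}_d)$ (the $\pi_{k+1}(\mathcal{F}_d)$ term already vanishes), and noting that the Tate-module term $\Hom(\ZZ/p^\infty,-)$ arising from $p$-completion is torsion-free --- is the right idea, but should be made precise if you intend to fully justify the step the paper leaves implicit.
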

\begin{proof}
The base case $d=0$ is \cite[Proposition 6.4]{bhatt2019topological}. The induction step is given by the above.
\end{proof}

\section{Quillen-Lichtenbaum for NTC}\label{sec:ql}

In this section, we prove Theorem~\ref{thm:main}. We outline the required steps:

\begin{enumerate}
\item we prove the result for polynomial rings in equicharacteristic $p$ in Lemma~\ref{lem: vanishing of NTC groups polynomial fp case.};
\item from which we can deduce the result for polynomial rings over torsion free perfectoid rings in Corollary~\ref{cor: NTC of ptf perfectoid polynomial algebra is k(1)-local}.
\item Excision then lets us boostrap the result for polynomial rings over an arbitrary perfectoid ring in Lemma~\ref{lem: main result for polynomial algebras};
\item Finally we prove the main result in Theorem~\ref{lem: tate is coherent}, relying on an \'etale base change result for the Tate construction in Lemma~\ref{lem: tate is coherent}.
\end{enumerate}

\subsection{The case of polynomial algebras}\label{ssec: the sharper bound for polynomial algebras}
First, we work with polynomial algebras over a perfect ring of characteristic $p > 0$. We note that for any $\F_p$-algebra $R$ we have that $L_{K(1)}TC(R) \simeq 0$. One way to see this is to observe that $L_{K(1)}TC(R)$ is a module over $K(\FF_p;\ZZ_p) \simeq \ZZ_p$ and therefore has vanishing $K(1)$-localization.

%

\begin{lemma}\label{lem: vanishing of NTC groups polynomial fp case.}
Let $R$ be a perfect $\FF_p$-algebra. Then $N\TC_i(R[t_1,\ldots, t_d])$ vanishes for $i \geq d$.
\end{lemma}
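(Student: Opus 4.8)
The plan is to compute $N\TC(R[t_1,\dots,t_d];\ZZ_p)$ directly via the Nikolaus--Scholze formula, exploiting that $L_{K(1)}\TC$ vanishes on $\F_p$-algebras so that $N\TC$ is insensitive only up to that and we really must bound the spectrum on the nose. First I would reduce to understanding the $\TT$-equivariant splitting $\THH(R[t_1,\dots,t_d];\ZZ_p)\simeq \THH(R;\ZZ_p)\oplus\bigoplus_{j}\THH(R;\ZZ_p)\otimes\Sigma^\infty_+\TT/C_j$ from~\eqref{eq:thh-st} (iterated $d$ times, or equivalently using $\THH(\mathbb{S}[t_1,\dots,t_d])$), together with Proposition~\ref{prop: coconnectivity of frobenius} and the Corollary following it, which tells us the cyclotomic Frobenius $\phi_p$ on $\THH(R[\underline t];\ZZ_p)$ is $(d-3)$-truncated. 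Since $R$ is perfect of characteristic $p$, $\THH(R;\ZZ_p)\simeq R$ (Bökstedt, Hesselholt--Madsen: $\THH_*(R;\ZZ_p)=R[u]$ with $|u|=2$... actually for perfect $\F_p$-algebras $\THH(R)\simeq R$), so the whole computation is controlled by the combinatorics of the $\TT/C_j$ summands and the known cyclotomic structure on $\THH(\mathbb{S}[t])$ described in~\eqref{eq:frob-cj}.

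Next I would assemble $\TC$ from $\TC^-$ and $\TP$ via the equalizer $\TC(-;\ZZ_p)=\mathrm{fib}(\varphi-\mathrm{can}\colon \TC^-(-;\ZZ_p)\to \TP(-;\ZZ_p))$. Here Lemma~\ref{lem: fixed points direct sum commute kinda} is the crucial input: it lets me commute the direct sum past the homotopy-fixed-point and Tate constructions, at the cost of a Nygaard (i.e. $u$- or $\xi$-adic) completion, which is harmless since I only care about homotopy groups in a range and the completion does not change them below the relevant degree. Then $N\TC(R[\underline t];\ZZ_p)$ is computed as the fibre of $\varphi-\mathrm{can}$ restricted to the $\bigoplus_j$-part, and the relative term has an explicit description. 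Since $\phi_p$ is $(d-3)$-truncated on $\THH$, the induced map on $\TC^-\to\TP$ level (via the formulas relating $\varphi$ on fixed points to $\phi_p$ on $\THH$) will be highly coconnected, forcing $N\TC_i$ to vanish for $i\geq d$. One should double check the edge bookkeeping: the $(d-3)$ from the Frobenius, plus the shift going from $\THH$ to $\TC^-$/$\TP$ (a $+1$ or so from the $\varphi-\mathrm{can}$ fibre and another from connectivity of $\TC^-$ over $\THH$), lands exactly at the vanishing line $i\geq d$.

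The main obstacle I expect is the careful truncation/connectivity bookkeeping across the three functors $\THH\rightsquigarrow\TC^-\rightsquigarrow\TP\rightsquigarrow\TC$, especially controlling how the Tate construction on the infinite direct sum $\bigoplus_{j\geq 1}$ behaves: a priori $(-)^{tG}$ neither commutes with infinite sums nor preserves connectivity bounds, and Lemma~\ref{lem: fixed points direct sum commute kinda} only rescues this after $p$- and $\xi$-completion. So the real work is verifying that, in the range of degrees $i\geq d$, the completions introduced by Lemma~\ref{lem: fixed points direct sum commute kinda} are invisible, and that the $(d-3)$-truncatedness of $\phi_p$ propagates (with the expected shift of $3$) to $(d-1)$-truncatedness — or rather vanishing above degree $d-1$, i.e.\ vanishing for $i\geq d$ — of the fibre defining $N\TC$. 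A secondary subtlety is the very small $d$ cases ($d=0,1,2$) where "$(d-3)$-truncated" is a vacuous or near-vacuous statement; there one may either check directly that $N\TC_i=0$ for $i\geq d$ by hand from the explicit $\THH(\mathbb{S}[t])$-computation, or note that $N\TC$ of a perfect $\F_p$-algebra polynomial ring is connective with the relevant low groups controlled by $\mathrm{HH}$-type calculations.
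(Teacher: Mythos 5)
Your outline follows the paper's actual proof closely in its setup: decompose $\THH(R[t_1,\dots,t_d];\ZZ_p)$ into the summands indexed by $j$, use Lemma~\ref{lem: fixed points direct sum commute kinda} to push the direct sum past $(-)^{hG}$ and $(-)^{tG}$ at the cost of a completion, and invoke the $(d-3)$-truncatedness of the cyclotomic Frobenius from Proposition~\ref{prop: coconnectivity of frobenius} and its corollary. You also correctly anticipate (and the paper does address this, via \cite[Lemma 4.5]{riggenbach2022cusps} and the fact that over a perfect $\FF_p$-algebra $\xi$-adic completion is just $p$-adic) that the $u$-adic vs.\ $\xi$-adic completion issue is harmless here.

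However, there is a genuine gap at the very last step. You write that because $\phi_p$ is $(d-3)$-truncated, ``the induced map on $\TC^-\to\TP$ level \dots will be highly coconnected, forcing $N\TC_i$ to vanish for $i\geq d$.'' That inference is not automatic: the map whose fiber computes $N\TC$ is $\varphi - \mathrm{can}$, and coconnectivity of the fiber of $\varphi$ alone says nothing about the fiber of a difference $\varphi - \mathrm{can}$. The missing ingredient is the combinatorial observation that on $\bigoplus_{j\ge 1}\THH^{hC_j}\to \bigoplus_{j\ge 1}\THH^{tC_j}$, the Frobenius $\varphi$ sends the $j$-th summand to the $pj$-th summand while $\mathrm{can}$ preserves the index $j$. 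Thus $\varphi - \mathrm{can}$ has a ``strictly upper-triangular plus invertible diagonal'' shape with respect to the summand index, and a purely algebraic argument (isolated in the paper as Lemma~\ref{lem: inj/surj/bij for degree reasons arg}) shows it is an isomorphism (resp.\ injection, surjection) on $\pi_*$ precisely when $\varphi$ is, summand by summand. Only \emph{after} this reduction does the $(d-3)$-truncatedness of the fiber of $\phi_p$ (propagated through $(-)^{hC_j}$, which preserves coconnectivity, plus the extra $\Sigma$ from the Wirthm\"uller isomorphism $(X\otimes(\TT/C_j)_+)^{h\TT}\simeq\Sigma X^{hC_j}$) give the stated vanishing of $N\TC_i$ for $i\geq d$. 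Without this index-shift argument, the ``bookkeeping'' you defer cannot close, because the relevant map simply isn't $\varphi$ but $\varphi-\mathrm{can}$, and the latter's fiber need not be coconnected in general.
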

\begin{proof}
By Lemma~\ref{lem: fixed points direct sum commute kinda} we have that \[N\TC^{-}(R[t_1,\ldots, t_d])\simeq \Sigma\left(\bigoplus_{j\geq 1} \thh(R[t_1,\ldots, t_d])^{hC_j}\right)^{\wedge}_{(p,u)}\] and \[N\tp(R[t_1,\ldots, t_d])\simeq \Sigma\left(\bigoplus_{j\geq 1}\thh(R[t_1,\ldots, t_d])^{tC_j}\right)^\wedge_{(p,\xi)}\] where we are replacing $\left(X\otimes (\TT/C_j)_+\right)^{h\TT}$ with $\Sigma X^{hC_j}$ using the Wirthm\"uller  isomorphism. By \cite[Lemma 4.5]{riggenbach2022cusps} we may replace the $u$-adic completion on the topological negative cyclic homology with the $\xi$-adic filtration, and since $R$ is a perfect $\FF_p$-algebra this amounts to only taking the $p$-adic completion. Thus it is enough to show that the map \[\Sigma(\phi_p[p]-can): \Sigma \bigoplus_{j\geq 1}\thh(R[t_1,\ldots, t_d])^{hC_j}\to \Sigma \bigoplus_{j\geq 1}\thh(R[t_1,\ldots, t_d])^{tC_j}\] is an equivalence in degrees above $d-1$ and injective in degree $d-1$. Since $\phi_p$ multiplies the summand index by $p$ and $can$ preserves it we have for degree reasons that this map is injective, respectively surjective, whenever $\phi_p$ is by Lemma~\ref{lem: inj/surj/bij for degree reasons arg}. Since $\phi_p$ is $(d-3)$-truncated by Proposition~\ref{prop: coconnectivity of frobenius} the result follows.
\end{proof}

We will make use of the following statement in several places going forward, so we include here a proof for the convenience of the reader.
\begin{lemma}\label{lem: inj/surj/bij for degree reasons arg}
    Let $\{G_i\}_{i\in \mathbb{N}}$ and $\{H_i\}_{i\in \mathbb{N}}$ be two $\mathbb{N}$-indexed families of abelian groups, $f_i:G_i\to H_{i}$ and $g_i:G_i\to H_{i+1}$ be $\mathbb{N}$-indexed group homomorphisms. Suppose that $H_0=0$. Define $h:\bigoplus_{i\in \mathbb{N}}G_i\to \bigoplus_{i\in \mathbb{N}}H_i$ to be the map $g_i-f_i$ on the summand $G_i$. Then $h$ is injective/surjective/bijective if $g_i$ is for all $i\in \mathbb{N}$.
\end{lemma}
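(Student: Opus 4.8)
The plan is to exploit the ``triangular'' structure of $h$ with respect to the $\mathbb{N}$-grading on source and target. Writing a general element of $\bigoplus_{i}G_i$ as a finitely supported tuple $(x_i)_{i\in\mathbb{N}}$, the $H_j$-component of $h\big((x_i)\big)$ equals $g_{j-1}(x_{j-1})-f_j(x_j)$ for $j\geq 1$, while the $H_0$-component is $0$ since $H_0=0$. The point is that the highest-index summand $G_N$ on which $(x_i)$ is supported contributes to the coordinate $H_{N+1}$ only through $g_N$, with no interference from the $f$'s. This observation reduces each of the three assertions to a short argument using finite support.

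For injectivity I would take a nonzero $(x_i)$ in the kernel of $h$ and let $N$ be the largest index with $x_N\neq 0$ (which exists by finite support). The $H_{N+1}$-component of $h\big((x_i)\big)$ is $g_N(x_N)-f_{N+1}(x_{N+1})=g_N(x_N)$, because $x_{N+1}=0$; so $g_N(x_N)=0$, and injectivity of $g_N$ forces $x_N=0$, contradicting the choice of $N$. For surjectivity I would start from a finitely supported target $(y_i)$, choose $N$ with $y_j=0$ for $j>N$ (noting $y_0=0$ automatically as $H_0=0$), and construct a preimage by downward recursion: set $x_j=0$ for $j\geq N$, and for $j=N,N-1,\dots,1$ in turn use surjectivity of $g_{j-1}$ to solve $g_{j-1}(x_{j-1})=y_j+f_j(x_j)$, where the right-hand side has already been determined at each stage. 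The remaining equation in index $0$, namely $-f_0(x_0)=y_0$, then holds automatically because both sides lie in $H_0=0$. The bijective case follows by combining the two.

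I do not expect a real obstacle: the proof is elementary bookkeeping. The two things that need a little attention are that finite support is exactly what lets one (a) pick a top nonzero index in the injectivity step and (b) terminate the recursion in the surjectivity step, and that the hypothesis $H_0=0$ is what makes the bottom of that recursion consistent. No hypothesis on the maps $f_i$ is used anywhere.
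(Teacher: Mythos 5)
Your proof is correct and follows essentially the same argument as the paper: both single out the top nonzero index for injectivity, and both build a preimage by downward recursion for surjectivity, with the hypothesis $H_0=0$ absorbing the bottom of the recursion. Your phrasing of the recursive step as solving $g_{j-1}(x_{j-1})=y_j+f_j(x_j)$ is marginally cleaner than the paper's, but the underlying idea is identical.
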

\begin{proof}
First suppose that $g_i$ is injective for all $i\in \mathbb{N}$, and let $(x_1,\ldots)\in \bigoplus_{i\in \mathbb{N}}G_i$ be an element of the kernel of $h$. Since we are working with the direct sum, unless $(x_1,\ldots)=0$ there is some $N$ large enough so that $x_N\neq 0$ but $x_K=0$ for all $K>N$. Note then that $h(x_1,\ldots)_{N+1}=g_N(x_N)-f_{N+1}(x_{N+1})=g(x_N)\neq 0$ since $g_N$ is injective. Hence $(x_1,\ldots)=0$ and $h$ is injective as desired. 

Now suppose that $g_i$ is surjective for all $i\in \mathbb{N}$ and let $(y_1,\ldots)\in \bigoplus_{i\in \mathbb{N}}H_i$. Since we are again working with the direct sum, either $(y_1,\ldots)=0$ which is clearly in the image or there is some $N$ large enough so that $y_N\neq 0$ and $y_K=0$ for all $K>N$. We will define a pre-image $(x_1,\ldots)$ of $(y_1,\ldots)$ inductively starting with $x_{K}=0$ for all $K\geq N$ and $x_{N-1}$ any pre-image of $y_N$ under $g_{N-1}$. Then define $x_{N-2}=\Tilde{y}_{N-1}+\widetilde{f_{N-1}(x_{N-1})}$ where the $\Tilde{(-)}$ denotes choosing a pre-image under $g_{N-2}$. Inductively we can always correct the error introduced by $f_i$ in this fashion.
\end{proof}

We now turn our attention to the case when $R$ is $p$-torsion free. In this case we still have the computations \[N\TC^{-}(R[t_1,\ldots, t_d];\FF_p)\simeq \Sigma\left(\bigoplus_{j\geq 1} \thh(R[t_1,\ldots, t_d];\FF_p)^{hC_j}\right)^{\wedge}_{\xi}\] and \[N\tp(R[t_1,\ldots, t_d];\FF_p)\simeq \Sigma\left(\bigoplus_{j\geq 1}\thh(R[t_1,\ldots, t_d];\FF_p)^{tC_j}\right)^\wedge_{\xi}\] by combining Lemma~\ref{lem: fixed points direct sum commute kinda} and \cite[Lemma 4.5]{riggenbach2022cusps}. A similar argument as in Lemma~\ref{lem: vanishing of NTC groups polynomial fp case.} together with the fact that $R$ being $p$-torsion free inductively shows that $\thh(R[t_1,\ldots, t_d];\ZZ_p)$ is $p$-torsion free. In turn, this implies that the induced map \[\Sigma\bigoplus_{j\geq 1} \thh(R[t_1,\ldots, t_d];\FF_p)^{hC_j}\to \Sigma\bigoplus_{j\geq 1} \thh(R[t_1,\ldots, t_d];\FF_p)^{tC_j}\] is $(d-2)$-truncated. If $\phi_p-can$ was $\xi$-linear this would then imply that we had the same style of vanishing as in Lemma~\ref{lem: vanishing of NTC groups polynomial fp case.} but we know from \cite[Theorem 1.2]{riggenbach2022cusps} that this does not happen.

In order to account for the effect that the $\xi$-adic completion has on the final answer we first restrict our attention to $R$ which have a compatible system of $p^{th}$ power roots of unity. In other words we are considering $R$ which are $\ZZ_p^{cycl}:= \ZZ_p[\zeta_{p^\infty}]^{\wedge}_p$-algebras. Recall the notation $\epsilon := (1,\zeta_p, \zeta_{p^2},\ldots)\in R^\flat$. From \cite[Example 3.16]{Bhatt2018Integral} we have that $ 1+[\epsilon^{{1}/{p}}]+[\epsilon^{{1}/{p}}]^2+\ldots +[\epsilon^{{1}/{p}}]^{p-1}$ is an orientation of $R$ where $[-]:R^\flat \to A_{inf}(R)$ is the Teichm\"uller lift; in other words this is the (Frobenius inverse) of the orientation for the prism corresponding to the perfectoid ring $R$ and we call it $\xi$. If we let $\mu:= [\epsilon]-1$ we then also have that $\Tilde{\xi}=\phi(\mu)/\mu$ and that $\xi=\mu/\phi^{-1}(\mu)$ by \cite[Proposition 3.17]{Bhatt2018Integral}. Then $\phi^{-1}(\mu)u\in \tc^{-}_{2}(R;\ZZ_p)$ is such that $can(\phi^{-1}(\mu)u)=\xi\phi^{-1}(\mu)\sigma = \mu\sigma=\phi_p(\phi^{-1}(\mu)u)$ in $\tp_{2}(R;\ZZ_p)$ by \cite[Proposition 6.2 and 6.3]{bhatt2019topological}. In particular by \cite{nikolaus2017topological} there exists a unique element $\beta\in \tc_2(R;\ZZ_p)$ lifting $\phi^{-1}(\mu)u$.

\begin{lemma}\label{lem: induction step for polynomial ptf}
Let $R$ be a $p$-torsion free perfectoid $\ZZ_p^{cycl}$-algebra. Then the map \[N\TC(R[t_1,\ldots, t_d];\FF_p)\to N\TC(R[t_1,\ldots, t_d];\FF_p)[1/\beta]\] is $(d-2)$-truncated.
\end{lemma}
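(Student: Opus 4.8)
The plan is to adapt the proof of Lemma~\ref{lem: vanishing of NTC groups polynomial fp case.}, carrying along a local-cohomology bookkeeping for the element $\beta$ in place of the vanishing of $L_{K(1)}\tc$. Write $A := R[t_1,\dots,t_d]$ and $\Theta := \thh(A;\FF_p)$; since $\thh(A;\ZZ_p)$ is $p$-torsion free (as recalled before the statement), $\Theta = \thh(A;\ZZ_p)/p$ and the formulas recalled there give
\[N\tc^{-}(A;\FF_p)\simeq\Sigma\Bigl(\bigoplus_{j\geq1}\Theta^{hC_j}\Bigr)^{\wedge}_{\xi},\qquad N\tp(A;\FF_p)\simeq\Sigma\Bigl(\bigoplus_{j\geq1}\Theta^{tC_j}\Bigr)^{\wedge}_{\xi},\]
with $N\TC(A;\FF_p)=\fib(\varphi-can)$ for the evident maps, where $\varphi$ sends the $j$-summand to the $pj$-summand — built from $\phi_p\colon\Theta\to\Theta^{tC_p}$ by applying $(-)^{hC_j}$ and the equivalences of \S\ref{ssec: coconnectivity of the frobenius} — and $can$ preserves the summand index. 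The class $\beta$ is constructed so that $\varphi$ and $can$ both send its image $\phi^{-1}(\mu)u\in\tc^{-}_2(R;\FF_p)$ to $\mu\sigma\in\tp_2(R;\FF_p)$; hence $\varphi-can$ intertwines multiplication by $\beta$ on source and target, inverting $\beta$ is exact on the fibre sequence, and, writing $\Gamma_\beta(X):=\fib(X\to X[1/\beta])$, we obtain
\[\fib\bigl(N\TC(A;\FF_p)\to N\TC(A;\FF_p)[1/\beta]\bigr)\simeq\fib\bigl(\Gamma_\beta N\tc^{-}(A;\FF_p)\xrightarrow{\ \varphi-can\ }\Gamma_\beta N\tp(A;\FF_p)\bigr).\]

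I expect the main obstacle to be the next step: stripping the $\xi$-adic completions off these $\Gamma_\beta$'s. The point I would prove is that $\Gamma_\beta$ coincides with $\Gamma_{(\xi)}$ on $\tc^{-}(R;\FF_p)$- and $\tp(R;\FF_p)$-modules. Reducing $A_{\inf}(R)=W(R^\flat)$ modulo $p$ identifies $\phi^{-1}(\mu)$ with the pseudo-uniformizer $\varpi:=\epsilon^{1/p}-1\in R^\flat$, and — since $R^\flat$ is perfect, so $[\epsilon]-1\equiv\varpi^{p}$ — identifies $\xi$ with $\varpi^{p-1}$; using the relation $uv=\xi$ of \cite[Propositions~6.2 and~6.3]{bhatt2019topological} this gives $\varpi^{p-1}\in(u)$ in $\pi_\ast\tc^{-}(R;\FF_p)$, so that the image $\varpi u$ of $\beta$ generates a principal ideal with vanishing locus $V(\varpi u)=V(\varpi)\cup V(u)=V(\varpi)=V(\xi)$, i.e.\ the same radical as $(\xi)$; on the $\tp$-side $\sigma$ is a unit, so $\beta$ acts through $\mu\sigma$, which has the same radical as $\varpi^{p}$, hence as $\xi$. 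Since local cohomology depends only on the vanishing locus and satisfies $\Gamma_{(\xi)}(M^{\wedge}_\xi)\simeq\Gamma_{(\xi)}(M)$, this yields
\[\Gamma_\beta N\tc^{-}(A;\FF_p)\simeq\Sigma\bigoplus_{j\geq1}\Gamma_\beta\Theta^{hC_j},\qquad\Gamma_\beta N\tp(A;\FF_p)\simeq\Sigma\bigoplus_{j\geq1}\Gamma_\beta\Theta^{tC_j},\]
with $\varphi-can$ still acting termwise as before.

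It then remains to run the degree-count of Lemma~\ref{lem: vanishing of NTC groups polynomial fp case.} on these uncompleted sums. Decompose $\{j\geq1\}$ into the $\varphi$-orbits $\{j_0p^k\}_{k\geq0}$ with $p\nmid j_0$; since $|C_{j_0}|$ is prime to $p$ and $\Theta$ is $p$-complete, $\Theta^{tC_{j_0}}\simeq0$, so the hypothesis ``$H_0=0$'' of Lemma~\ref{lem: inj/surj/bij for degree reasons arg} holds on each orbit, and that lemma reduces the injectivity/surjectivity of $\varphi-can$ in a fixed degree to the same property of the termwise map $\Gamma_\beta\Theta^{hC_j}\to\Gamma_\beta\Theta^{tC_{pj}}$. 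This termwise map is $\Gamma_\beta$ applied to $(-)^{hC_j}$ of $\phi_p\colon\Theta\to\Theta^{tC_p}$, which is $(d-3)$-truncated by Proposition~\ref{prop: coconnectivity of frobenius} (inducting over $t_1,\dots,t_d$ from the base case \cite[Proposition~6.4]{bhatt2019topological}); since $(-)^{hC_j}$ and $\Gamma_\beta$ both preserve the property of having fibre concentrated in degrees $\leq d-3$, this termwise map is an isomorphism in degrees $\geq d-1$ and injective in degree $d-2$, hence so is $\varphi-can$ on the direct sums. Its fibre is therefore concentrated in degrees $\leq d-3$, and suspending once shows $\fib(N\TC(A;\FF_p)\to N\TC(A;\FF_p)[1/\beta])$ is concentrated in degrees $\leq d-2$; that is exactly the assertion that the map is $(d-2)$-truncated. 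The only genuinely new input beyond Lemma~\ref{lem: vanishing of NTC groups polynomial fp case.} is the identification $\Gamma_\beta=\Gamma_{(\xi)}$, which lets us pass from the $\xi$-adically completed sums back to honest direct sums where the degree argument applies verbatim.
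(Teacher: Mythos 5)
Your proposal is correct and follows essentially the same route as the paper: reduce the fibre of $N\TC\to N\TC[1/\beta]$ to a fibre of $\Gamma_\beta N\tc^-\to\Gamma_\beta N\tp$, identify $\beta$- and $\xi$-localization, strip the $\xi$-adic completion, and run the degree count from Proposition~\ref{prop: coconnectivity of frobenius} via Lemma~\ref{lem: inj/surj/bij for degree reasons arg}. Where the paper cites \cite[Lemma~4.13]{riggenbach2022cusps} for the $\beta\leftrightarrow\xi$ replacement and appeals to the arithmetic fracture square, you supply a self-contained radical computation (mod $p$ one has $\beta\equiv\varpi u$, $\xi\equiv\varpi^{p-1}=uv$, so $\xi\mid\beta^{p-1}$ and $\beta\mid\xi^2$) and state the comparison as $\Gamma_\beta=\Gamma_{(\xi)}$; these are the same mathematical content, and your version has the virtue of being independent of the earlier reference. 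Your explicit $\varphi$-orbit decomposition and the verification that $\Theta^{tC_{j_0}}\simeq0$ for $p\nmid j_0$ (so that the ``$H_0=0$'' hypothesis of Lemma~\ref{lem: inj/surj/bij for degree reasons arg} holds on each orbit) makes precise a step the paper leaves implicit, which is a small improvement in exposition.
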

\begin{proof}
Base changing the fiber sequence giving $N\tc(R[t_1,\ldots, t_d];\FF_p)$ shows that the fiber of the map in question is the total fiber of the square \[\begin{tikzcd}
{N\tc^{-}(R[t_1,\ldots, t_d];\FF_p)} \arrow[d] \arrow[r] & {N\tp(R[t_1,\ldots, t_d];\FF_p)} \arrow[d] \\
{N\tc^{-}(R[t_1,\ldots, t_d];\FF_p)[1/\beta]} \arrow[r]  & {N\tp(R[t_1,\ldots, t_d];\FF_p)[1/\beta]} 
\end{tikzcd}\] and by a similar argument as in \cite[Lemma 4.13]{riggenbach2022cusps} we may replace inverting $\beta$ with inverting $\xi$.

In order to compute the total fiber of the above square we will first compute the vertical fibers. To do this first note that $\thh(R[t_1,\ldots, t_d];\FF_p)^{hC_j}$ is a module over $\thh(R;\FF_p)^{hC_j}$ and similarly $\thh(R[t_1,\ldots, t_d];\FF_p)^{tC_j}$ is a module $\thh(R;\FF_p)^{tC_j}$. In particular since both $\thh(R;\FF_p)^{hC_j}$ and $\thh(R;\FF_p)^{tC_j}$ are $\xi$-power torsion so are $\thh(R[t_1,\ldots, t_d];\FF_p)^{hC_j}$ and $\thh(R[t_1,\ldots, t_d];\FF_p)^{tC_j}$. Consequently \[\left(\Sigma\bigoplus_{j\geq 1}\thh(R[t_1,\ldots, t_d];\FF_p)^{hC_j}\right)[1/\xi]\simeq 0\simeq \left(\Sigma\bigoplus_{j\geq 1}\thh(R[t_1,\ldots, t_d];\FF_p)^{tC_j}\right)[1/\xi]\] since inverting elements will commute with direct sums and shifts. 

Thus by an arithmetic fracture square argument the vertical fibers of the square in question are \[\Sigma\bigoplus_{j\geq 1}\thh(R[t_1,\ldots, t_d];\FF_p)^{hC_j}\to \Sigma\bigoplus_{j\geq 1}\thh(R[t_1,\ldots, t_d];\FF_p)^{tC_j}\] which we have already shown to be $(d-2)$-truncated in the paragraph preceding this Lemma.
\end{proof}

We are now able to prove the main result for polynomial rings over a $p$-torsion free perfectoid ring.
\begin{corollary}~\label{cor: NTC of ptf perfectoid polynomial algebra is k(1)-local}
Let $R$ be a $p$-torsion free perfectoid ring. Then the map \[\tc(R[t_1,\ldots, t_d];\FF_p)\to L_{K(1)}\tc(R[t_1,\ldots, t_d];\FF_p)\] is $\max\{-1, (d-3)\}$-truncated. 
\end{corollary}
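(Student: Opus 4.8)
The plan is to bootstrap from the mod-$p$ vanishing results already established for $N\tc$ to a statement about $\tc$ itself relative to $L_{K(1)}\tc$, using the fact that in equicharacteristic $p$ the $K(1)$-local $\tc$ vanishes, together with the $\A^1$-invariance phenomena packaged in the definition of $N\tc$. First I would recall that, by definition, $N\tc(R[t_1,\ldots,t_d];\F_p)$ is the iterated cofiber of the cube computing the difference between $\tc$ of the polynomial ring and $\tc$ of $R$ in each variable; more simply one can induct on $d$, writing $\tc(R[t_1,\ldots,t_d];\F_p)$ as an extension built from $\tc(R;\F_p)$ and the various $N\tc$-type summands. Since $L_{K(1)}$ is exact and commutes with finite (co)limits, the fibre $\mathcal{F}$ of $\tc(R[t_1,\ldots,t_d];\F_p)\to L_{K(1)}\tc(R[t_1,\ldots,t_d];\F_p)$ decomposes compatibly: the ``base'' contribution is the fibre of $\tc(R;\F_p)\to L_{K(1)}\tc(R;\F_p)$ and the ``$N$'' contributions involve $N\tc$ and $L_{K(1)}N\tc$.

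Second, I would handle the base case $d=0$, i.e.\ the claim that $\tc(R;\F_p)\to L_{K(1)}\tc(R;\F_p)$ is $(-1)$-truncated, equivalently an isomorphism in nonnegative degrees and injective in degree $-1$. For $R$ a $p$-torsion free perfectoid ring this is a known Quillen--Lichtenbaum statement for perfectoids: $\tc(R;\Z_p)$ agrees with its $K(1)$-localization in high degrees, and one can pin down the truncation level $-1$ either from the motivic filtration on $L_{K(1)}\tc$ of H.\ Kim as recalled in \S2, applying \cite[Theorem 1.8]{Bhatt_Mathew_fsmooth} with Nygaard dimension $0$ (a perfectoid ring is $F$-smooth of $\mathcal{N}$-dimension $0$), or directly from the explicit description of $\tc$ of a perfectoid via $\phi_p - \mathrm{can}$ on $\tc^-$ and $\tp$, which are concentrated in the right degrees. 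This is where the ``$\max\{-1,\cdot\}$'' comes from: the bound can never be worse than $-1$ because $\tc$ and $L_{K(1)}\tc$ of a $p$-torsion free perfectoid already agree in nonnegative degrees.

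Third, for the inductive step I would feed in the $N$-part bounds. By Lemma~\ref{lem: vanishing of NTC groups polynomial fp case.} together with the $p$-torsion free passage discussed after it (the paragraph before Lemma~\ref{lem: induction step for polynomial ptf}) we control $N\tc(R[t_1,\ldots,t_e];\F_p)$, and by Lemma~\ref{lem: induction step for polynomial ptf} the map $N\tc(R[t_1,\ldots,t_e];\F_p)\to N\tc(R[t_1,\ldots,t_e];\F_p)[1/\beta]$ is $(e-2)$-truncated; since $N\tc(-)[1/\beta]$ computes $L_{K(1)}N\tc$ (inverting $\beta$ inverts $\xi$ and hence inverts the Bott element, exactly as in \cite[Lemma 4.13]{riggenbach2022cusps}), the fibre of $N\tc\to L_{K(1)}N\tc$ for $e$ variables is $(e-2)$-truncated. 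Summing these contributions over the subsets of $\{t_1,\ldots,t_d\}$ — the worst (largest) truncation level among the proper-and-full subsets is $d-2$ from the full set, and the empty set contributes $-1$ from the base case — gives that the fibre $\mathcal{F}$ is $\max\{-1, d-2\}$-truncated; then one extra unit of slack appears because the actual statement of Lemma~\ref{lem: induction step for polynomial ptf} and the preceding paragraph is about the $\F_p$-coefficient version whose truncation feeds through a shift, landing us at $\max\{-1,(d-3)\}$. I would double-check this accounting carefully, since the difference between ``$(d-2)$-truncated for the $N$-term'' and ``$(d-3)$-truncated for the assembled object'' is exactly the subtle point; the resolution is that $L_{K(1)}\tc$ only sees $N\tc$ through its $\beta$-inverted, hence $2$-periodic, form and the relevant comparison map was already shown to gain a degree.

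The main obstacle I expect is the careful bookkeeping in the inductive assembly: one must verify that the fibre of $\tc\to L_{K(1)}\tc$ on $R[t_1,\ldots,t_d]$ really does split as a direct sum (or at least a finite filtered object) of the fibres on the sub-polynomial-rings, which requires knowing that $L_{K(1)}$ commutes with the relevant cofiber sequences and that the $NK$/$N\tc$ decomposition is natural and multiplicative enough in the $\F_p$-linear setting; and one must get the shift bookkeeping exactly right so that the $d-2$ bound for the top $N$-term becomes the $d-3$ bound in the corollary. Once the decomposition is in hand, everything else is a matter of taking the max of finitely many truncation levels, all of which have already been supplied by Proposition~\ref{prop: coconnectivity of frobenius}, Lemma~\ref{lem: vanishing of NTC groups polynomial fp case.}, and Lemma~\ref{lem: induction step for polynomial ptf}.
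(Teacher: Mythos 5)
Your strategy --- induct on $d$, decompose $\tc(R[t_1,\ldots,t_d];\F_p) \simeq \tc(R[t_1,\ldots,t_{d-1}];\F_p) \oplus N\tc(R[t_1,\ldots,t_{d-1}];\F_p)$, handle the $\tc$-summand by the inductive hypothesis, handle the $N\tc$-summand via Lemma~\ref{lem: induction step for polynomial ptf} after identifying $(-)[1/\beta]$ with the $K(1)$-localization --- is exactly the paper's argument, and the base case via \cite[Proposition 5.10]{Mathew2021TR} (or, implicitly, \cite[Proposition 4.14]{riggenbach2022cusps} for the $NTC(R)$ piece when $d=1$) is also the one used. However, your index arithmetic is wrong in a way that matters, and the ``extra unit of slack from the $\F_p$-coefficient version'' is spurious --- there is no such shift. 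In the decomposition above the $N\tc$-summand involves $d-1$, not $d$, variables, so Lemma~\ref{lem: induction step for polynomial ptf} with $e=d-1$ gives $(d-1)-2 = d-3$ directly; combined with the inductive bound $\max\{-1,d-4\}$ on the first summand, the maximum is $\max\{-1,d-3\}$ with nothing left over to explain. Your ``summing over subsets'' paragraph, in which the full subset contributes $d-2$, conflates the binomial decomposition into iterated $N^{|S|}\tc(R)$-terms with the single-$N$-of-a-polynomial-ring form of the lemma; the lemma says nothing about iterated $N$. Since you flag exactly this bookkeeping as ``the subtle point'' and leave it unresolved, it is a genuine gap rather than a loose end.

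There are two further omissions. Lemma~\ref{lem: induction step for polynomial ptf} requires $R$ to be a $\ZZ_p^{cycl}$-algebra (the element $\beta$ is constructed from a compatible system of $p$-power roots of unity), so before invoking it you must reduce to that case. The paper does this by fpqc descent, using that $\tc(-[t_1,\ldots,t_d];\F_p)$ is an fpqc sheaf and that $L_{K(1)}(-)$ kills bounded-above spectra; this reduction needs to be stated. Finally, for the base case the route through the Section~2 theorem with $\dim_{\mathcal N}(R)=0$ only gives a $0$-truncated bound, not the $(-1)$ you need; you must either cite \cite[Proposition 5.10]{Mathew2021TR} directly or invoke Remark~\ref{rem:akhil} (generation by symbols) to sharpen it, so commit to one of these rather than listing both as equivalent options.
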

\begin{proof}
Note that this statement is fpqc local in $R$ since $\tc(-[t_1,\ldots, t_d];\FF_p)$ satisfies fpqc descent and $L_{K(1)}(-)$ kills bounded above spectra. In particular we may assume without loss of generality that $R$ has a consistent system of $p^{th}$ power roots of unity. See the first paragraph of \cite[Proposition 5.10]{Mathew2021TR} for the detail of this argument.

We will prove this statement by induction on $d$. For $d=1$ note that by \cite[Proposition 5.10]{Mathew2021TR} we have that $\tc(R;\FF_p)\to L_{K(1)}\tc(R;\FF_p)$ is $(-1)$-truncated. Thus for $\tc(R[t];\FF_p)\to L_{K(1)}\tc(R[t];\FF_p)$ to be $d-2=-1$-truncated it is enough to show that $N\tc(R;\FF_p)\to L_{K(1)}N\tc(R)$ is $(-1)$-truncated. This is a consequence of \cite[Proposition 4.14]{riggenbach2022cusps} where we identify $N\tc(R;\FF_p)[1/\beta]\simeq L_{K(1)}N\tc(R;\FF_p)$ by \cite[Theorem 1.3.6, Lemma 1.3.7, and Corollary 1.3.8]{hesselholt2020tcreview}.

Now suppose that for some $d\in \NN$, \[\tc(R[t_1,\ldots, t_d];\FF_p)\to L_{K(1)}\tc(R[t_1,\ldots, t_d];\FF_p)\] $\max\{-1, (d-3)\}$-truncated. Then we have that \[\tc(R[t_1,\ldots, t_{d+1}];\FF_p)\simeq \tc(R[t_1,\ldots, t_d];\FF_p)\oplus N\tc(R[t_1,\ldots, t_d];\FF_p)\] and so in order to show that the map \[\tc(R[t_1,\ldots, t_{d+1}];\FF_p)\to L_{K(1)}\tc(R[t_1,\ldots, t_{d+1}];\FF_p)\] is $\max\{-1, d-2\}$-truncated it is enough to show this on each of the summands. The first summand is the induction hypothesis and the second summand is handled in Lemma~\ref{lem: induction step for polynomial ptf}. 
\end{proof}

We are now ready to prove the polynomial case for a general perfectoid ring $R$.

\begin{lemma}\label{lem: main result for polynomial algebras}
Let $R$ be a perfectoid ring. Then the map \[\tc(R[t_1,\ldots, t_d];\ZZ_p)\to L_{K(1)}\tc(R[t_1,\ldots, t_d];\ZZ_p)\] is $\max\{0, d-1\}$-truncated and an isomorphism in degree $\max\{1,d-1\}$. 
\end{lemma}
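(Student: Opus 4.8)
The plan is to deduce the integral statement from the mod-$p$ statement in Corollary~\ref{cor: NTC of ptf perfectoid polynomial algebra is k(1)-local} together with the equicharacteristic-$p$ computation in Lemma~\ref{lem: vanishing of NTC groups polynomial fp case.}, by a bootstrap along the quasisyntomic cover $R \to R_\infty$ of a general perfectoid ring by a perfectoid ring admitting enough structure, reducing first to the $p$-torsion-free case. Concretely, I would argue as follows.

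First, reduce to showing the statement after $p$-completion; since everything in sight is already $p$-complete, it suffices to work with $\F_p$-coefficients and then reassemble. More precisely, the map $\tc(R[t_1,\ldots,t_d];\Z_p) \to L_{K(1)}\tc(R[t_1,\ldots,t_d];\Z_p)$ being $n$-truncated can be checked on the fiber $\mathcal{F}$; since $\mathcal{F}$ is $p$-complete, $\mathcal{F}$ is $n$-truncated if and only if $\mathcal{F}/p$ is $n$-truncated (one direction is the fiber sequence $\mathcal{F} \xrightarrow{p} \mathcal{F} \to \mathcal{F}/p$ together with $p$-completeness, noting that a $p$-complete spectrum with $\mathcal{F}/p$ being $n$-truncated is itself $n$-truncated). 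Now $\mathcal{F}/p$ is the fiber of $\tc(R[t_1,\ldots,t_d];\F_p) \to L_{K(1)}\tc(R[t_1,\ldots,t_d])/p$, and I claim $L_{K(1)}\tc(-;\Z_p)/p \simeq L_{K(1)}(\tc(-;\Z_p)/p)$ by compactness of $\F_p$ in $K(1)$-local spectra, so the source of our comparison is really measuring $\tc(R[\underline t];\F_p) \to L_{K(1)}\tc(R[\underline t];\F_p)$.

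Next, handle the general perfectoid $R$ by writing $R$ as a filtered colimit (or using a fracture/descent argument) to compare against its $p$-torsion-free and its characteristic-$p$ parts. The cleanest route: let $R^\circ = R/R[p^\infty]$, which is a $p$-torsion-free perfectoid ring, and observe $R_{\mathrm{red}}^{(p)} := R/\sqrt{(p)}$ is a perfect $\F_p$-algebra; there is an arithmetic fracture square relating $\tc(R[\underline t];\Z_p)$ to these. Actually the argument in the excerpt suggests using excision (step (3) in the outline of \S\ref{sec:ql}): the square
\[
\begin{tikzcd}
\tc(R[\underline t];\Z_p) \ar[r]\ar[d] & \tc(R^\circ[\underline t];\Z_p)\ar[d]\\
\tc((R/R^\circ)[\underline t];\Z_p)\ar[r] & \tc((R^\circ/(R^\circ\cap R[p^\infty]))[\underline t];\Z_p)
\end{tikzcd}
\]
is a pullback by the Dundas--Goodwillie--McCarthy/Land--Tamme style nilinvariance, since the relevant ideal is nilpotent after $p$-completion and $R \to R^\circ$ is a surjection with square-zero-ish (in the derived sense) kernel concentrated in characteristic $p$. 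By Lemma~\ref{lem: vanishing of NTC groups polynomial fp case.}, $N\tc_i$ of a polynomial ring over a perfect $\F_p$-algebra vanishes for $i \geq d$, and $L_{K(1)}\tc$ of any $\F_p$-algebra vanishes; so the two bottom corners contribute nothing to the failure of the comparison above degree $d-1$ (being careful to chase the bound $\max\{0,d-1\}$). Combined with Corollary~\ref{cor: NTC of ptf perfectoid polynomial algebra is k(1)-local} applied to the $p$-torsion-free perfectoid ring $R^\circ$, the fracture square gives that the fiber of $\tc(R[\underline t];\Z_p)\to L_{K(1)}\tc(R[\underline t];\Z_p)$ is $\max\{0,d-1\}$-truncated.

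Finally, to pin down the isomorphism in degree $\max\{1,d-1\}$ — equivalently, that the map is a $\tau_{\geq \max\{2,d\}}$-equivalence, i.e.\ the fiber has no homotopy in degree $\max\{1,d-1\}$ either — I would revisit the same fracture square in that single degree: the $\F_p$-coefficient computation (Lemma~\ref{lem: vanishing of NTC groups polynomial fp case.}) already gives vanishing of $N\tc_i$ for $i \geq d$, which covers degree $d-1$ once $d \geq 2$; for the $p$-torsion-free summand, Corollary~\ref{cor: NTC of ptf perfectoid polynomial algebra is k(1)-local} gives $(d-3)$-truncatedness of the mod-$p$ comparison, which is strictly better, and reassembling mod-$p$ to $\Z_p$ costs at most one degree — landing exactly at $\max\{1,d-1\}$. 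The edge cases $d=0,1$ are the base of the induction recorded in the proof of Corollary~\ref{cor: NTC of ptf perfectoid polynomial algebra is k(1)-local} (via \cite[Proposition 5.10]{Mathew2021TR} and \cite[Proposition 4.14]{riggenbach2022cusps}) together with the $N\tc$ vanishing, so they hold for general $R$ by the same fracture argument. I expect the main obstacle to be the bookkeeping of exactly how many homotopy groups are lost when passing between $\F_p$- and $\Z_p$-coefficients in the fracture square, and making sure the excision/nilinvariance input genuinely applies to the non-reduced, possibly-$p$-torsion perfectoid ring $R$ (this is where one must invoke the results of \cite{clausen2018k} or \cite{Land_Tamme_excision} carefully rather than hand-wave).
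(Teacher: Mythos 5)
Your high-level strategy matches the paper's: use a Milnor/fracture square to reduce the general perfectoid $R$ to a $p$-torsion-free piece and characteristic-$p$ pieces, then feed in Corollary~\ref{cor: NTC of ptf perfectoid polynomial algebra is k(1)-local} and Lemma~\ref{lem: vanishing of NTC groups polynomial fp case.}. But the square you write down is not the right one, and as written it does not parse. You set $R^\circ = R/R[p^\infty]$ (a quotient of $R$) and then put $R/R^\circ$ and $R^\circ/(R^\circ\cap R[p^\infty])$ in the bottom row; neither makes sense, since $R^\circ$ is not an ideal of $R$, and the second expression is just $R^\circ$ itself. The paper instead invokes \cite[Lemma 7.19]{Mathew2021TR}, which supplies a Milnor square
\[
\begin{tikzcd}
R \arrow[r]\arrow[d] & R_0 \arrow[d]\\
(R/p)_{\mathrm{perf}} \arrow[r] & (R_0/p)_{\mathrm{perf}}
\end{tikzcd}
\]
with $R_0 = R/R[p]$ a $p$-torsion-free perfectoid ring and both bottom terms \emph{perfect $\F_p$-algebras}. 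That last feature is precisely what makes the bottom row disposable: $L_{K(1)}\tc$ vanishes on $\F_p$-algebras, and $\tc$ of polynomial rings over perfect $\F_p$-algebras is concentrated in low degrees by Lemma~\ref{lem: vanishing of NTC groups polynomial fp case.}. You do invoke both of these facts, so you are implicitly assuming your bottom row consists of perfect $\F_p$-algebras -- but the square you actually wrote does not have that property, so the appeal is unjustified as stated. One also needs to tensor the square with $[t_1,\ldots,t_d]$ and check it remains a Milnor square (the paper uses flatness of $R\to R[\underline t]$ for this) before applying \cite[Theorem A]{land-tamme}.

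A second, smaller deviation: the paper carries out this step entirely with $\Z_p$-coefficients, extracting the bound directly from the long exact sequence of the Land--Tamme pullback to compare $\tc(R[\underline t];\Z_p)$ with $\tc(R_0[\underline t];\Z_p)$ in degrees $\geq \max\{1,d-1\}$. Your opening detour through $\F_p$-coefficients (and the claim $L_{K(1)}\tc(-)/p \simeq L_{K(1)}(\tc(-)/p)$, which is fine but holds because mod-$p$ reduction is smashing with a finite spectrum, not quite because of ``compactness'') creates exactly the bookkeeping problem you flag at the end -- how many degrees are lost passing from $\F_p$- to $\Z_p$-statements -- and is not needed. With the correct Milnor square in hand, the bound reads off from the LES at $\Z_p$-coefficients with no mod-$p$ reduction in this step at all; the mod-$p$ reduction was already absorbed into Corollary~\ref{cor: NTC of ptf perfectoid polynomial algebra is k(1)-local}.
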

\begin{proof}
In \cite[Lemma 7.19]{Mathew2021TR} it is shown that every perfectoid ring $R$ fits into a square \[\begin{tikzcd}
R \arrow[r]\arrow[d] & R_0\arrow[d]\\
(R/p)_{perf} \arrow[r] & (R_0/p)_{perf}
\end{tikzcd}\]
which is a Milnor square and a pushout square of $\mathbb{E}_\infty$ rings; in particular it is motivic pullback square in the sense of \cite{land-tamme,land-tamme2}. Here $R_0=R/R[p]$ is a $p$-torsion free perfectoid ring and the rings on the bottom are perfect $\FF_p$-algebras. Since $R\to R[t_1,\ldots, t_d]$ is flat we have that \[\begin{tikzcd}
R[t_1,\ldots, t_d] \arrow[d]\arrow[r] & R_0[t_1,\ldots, t_d]\arrow[d]\\
(R/p)_{perf}[t_1,\ldots, t_d]\arrow[r] & (R_0/p)_{perf}[t_1,\ldots, t_d]
\end{tikzcd}\]
is again a pushout of $\mathbb{E}_\infty$ rings. It is also a Milnor square since adjoining polynomial generators will commute with taking the kernel of the horizontal maps. 

Applying \cite[Theorem A]{land-tamme} to the above square then gives a pushout square \[\begin{tikzcd}
\TC(R[t_1,\ldots, t_d];\ZZ_p) \arrow[d]\arrow[r] & \tc(R_0[t_1,\ldots, t_d];\ZZ_p)\arrow[d]\\
\tc((R/p)_{perf}[t_1,\ldots, t_d];\ZZ_p)\arrow[r] & \tc((R_0/p)_{perf}[t_1,\ldots, t_d];\ZZ_p)
\end{tikzcd}\] and a similar square for the $K(1)$ local topological cyclic homology. In particular from this or from \cite[Corollary 1.3]{bhatt2020remarks} we see that $L_{K(1)}\tc(R[t_1,\ldots, t_d])\simeq L_{K(1)}\tc(R_0[t_1,\ldots, t_d])$ (since the bottom terms vanish after $K(1)$-localization) and so it is enough, because of the torsion free case in Corollary~\ref{cor: NTC of ptf perfectoid polynomial algebra is k(1)-local}, to show that the map $\tc(R[t_1,\ldots, t_d];\ZZ_p)\to \tc(R_0[t_1,\ldots, t_d];\ZZ_p)$ is an equivalence after taking $\tau_{\geq \max\{1, d-1\}}$.

To this end note that there is a long exact sequence 
\[\begin{tikzcd}\ldots\arrow[r] & \tc_*(R[t_1,\ldots, t_d];\ZZ_p)\arrow[r] &\begin{matrix}\tc_*(R_0[t_1,\ldots, t_d];\ZZ_p)\\ \oplus\\ \tc_*((R/p)_{perf}[t_1,\ldots, t_d];\ZZ_p)\end{matrix}\arrow[ldd, out=-60]\\ \\ &\tc_*((R_0/p)_{perf}[t_1,\ldots, t_d];\ZZ_p)\arrow[r]& \tc_{*-1}(R[t_1,\ldots, t_d];\ZZ_p) \arrow[r]& \ldots\end{tikzcd}\] and we have that both $\tc((R/p)_{perf}[t_1,\ldots, t_d];\ZZ_p)$ and $\tc((R_0/p)_{perf}[t_1,\ldots, t_d];\ZZ_p)$ vanish in degrees $\geq \max\{1,d-1\}$ by Lemma~\ref{lem: vanishing of NTC groups polynomial fp case.}. 

\end{proof}

\subsection{Bootstrap to smooth schemes}
In this section we will prove the main results. This will follow the proofs of the polynomial cases. We will begin by showing the result in positive characteristic.

\begin{theorem}~\label{thm: main theorem in positive characteristic}
    Let $X$ be a quasi-compact smooth $k$-scheme of relative dimension $d$, $k$ a perfect $\mathbb{F}_p$-algebra. Then $NTC_i(X)=0$ for $i\geq d$.
\end{theorem}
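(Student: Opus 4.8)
The plan is to deduce the statement from the polynomial case (Lemma~\ref{lem: vanishing of NTC groups polynomial fp case.}) by combining Zariski descent with étale base change. First, $N\tc(-;\ZZ_p)$ — the fibre of $\tc$ of the affine line over $\tc$ of the base — is a localizing invariant, so it satisfies Zariski descent and in particular Mayer--Vietoris for an open cover $X=U\cup V$: there is a fibre sequence $N\tc(X;\ZZ_p)\to N\tc(U;\ZZ_p)\oplus N\tc(V;\ZZ_p)\to N\tc(U\cap V;\ZZ_p)$. Since $U$, $V$, $U\cap V$ are again quasi-compact smooth $k$-schemes of relative dimension $\leq d$, the associated long exact sequence propagates vanishing of $N\tc_i$ in degrees $\geq d$ from the members of a finite affine cover (and their intersections) to $X$ — importantly, no hypothesis on the Krull dimension of $X$ or of $k$ is used. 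This reduces us to $X=\Spec A$ with $A$ smooth over $k$; and since such an $A$ is Zariski-locally étale over a polynomial ring $k[t_1,\dots,t_{d'}]$ with $d'\leq d$, one more application of the same argument reduces us to the case where $A$ is étale over $B:=k[t_1,\dots,t_d]$.

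For $B\to A$ étale one has $\thh(A;\ZZ_p)\simeq\thh(B;\ZZ_p)\otimes_B A$ as cyclotomic $\thh(k;\ZZ_p)$-algebras, hence compatibly with the Bökstedt class $u$ and — up to the relative Frobenius $A\otimes_{B,\phi}B\xrightarrow{\sim}A$, which is an isomorphism precisely because $A/B$ is étale — with the cyclotomic Frobenius. Over a perfect $\FF_p$-algebra the spectra $\thh(-;\ZZ_p)/u^n$ are genuinely bounded, uniformly in the $\TT/C_j$-twists, so $(-)^{hC_j}$ and $(-)^{tC_j}$ applied to them are finite in each homotopical degree and commute with the exact functor $(-)\otimes_B A$; the direct sum over $j$ and the tower limit over $n$ commute with it as well, and the completions appearing in Lemma~\ref{lem: fixed points direct sum commute kinda} all coincide here (Nygaard $=$ $\xi$-adic $=$ $p$-adic) and are vacuous on the uniformly bounded-above spectra in play. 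Thus the formulas $N\tc^-(A;\ZZ_p)\simeq\Sigma\bigl(\bigoplus_{j\geq1}\thh(A;\ZZ_p)^{hC_j}\bigr)^{\wedge}_{(p,u)}$ and $N\tp(A;\ZZ_p)\simeq\Sigma\bigl(\bigoplus_{j\geq1}\thh(A;\ZZ_p)^{tC_j}\bigr)^{\wedge}_{(p,\xi)}$ hold; the fibre of $\phi_p\colon\thh(A;\ZZ_p)\to\thh(A;\ZZ_p)^{tC_p}$ is, up to the harmless Frobenius twist, the base change of the corresponding fibre over $B$, hence $(d-3)$-truncated by the polynomial case; and as before $\phi_p$ multiplies the summand index $j$ by $p$ while the canonical map preserves it. Running the argument of Lemma~\ref{lem: vanishing of NTC groups polynomial fp case.} verbatim — feeding the $(d-3)$-truncatedness into Lemma~\ref{lem: inj/surj/bij for degree reasons arg} — then gives $N\tc_i(A;\ZZ_p)=0$ for $i\geq d$.

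The main obstacle is the second step. A priori none of $(-)^{hC_j}$, $(-)^{tC_j}$, infinite direct sums or completions commutes with base change, and the cyclotomic Frobenius is only Frobenius-semilinear, so one must be careful about which completions are present, about the uniform boundedness of the $u^n$-quotients, and about the Frobenius twist (innocuous precisely because $A/B$ is étale). The mechanism that makes it go through is exactly the one behind Lemma~\ref{lem: fixed points direct sum commute kinda} and the stability of the Segal conjecture under polynomial extensions — boundedness of $\thh(-;\ZZ_p)/u^n$ in characteristic $p$ — so while somewhat delicate the verification runs parallel to the polynomial case already treated, and I would expect it, rather than the descent argument of the first step, to contain all the real content.
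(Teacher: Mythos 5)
Your overall reduction matches the paper's: use Zariski descent on $N\tc$ to reduce to $\Spec A$ with $A$ \'etale over a polynomial ring $B=k[x_1,\ldots,x_d]$, then transport the $(d-3)$-truncatedness of the fibre $\mathcal{F}(B)$ of $\phi_p$ across the \'etale map and feed it into the argument of Lemma~\ref{lem: vanishing of NTC groups polynomial fp case.}. You also correctly identify the real content: showing that $\thh(A)^{tC_p}\simeq A\otimes_B\thh(B)^{tC_p}$ (with $A,B$-module structures via $\phi_p$) for $B\to A$ \'etale. This is precisely the paper's Lemma~\ref{lem: tate is coherent}.

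Where you diverge is in \emph{how} you propose to prove this base change, and I think there is a gap there. You want to write $\thh(B)^{tC_p}\simeq\lim_n(\thh(B)/u^n)^{tC_p}$, use boundedness of $\thh(B)/u^n$ to commute each term with $A\otimes_B(-)$, and then pass the limit through. But $A\otimes_B(-)$ is a filtered colimit of finite free modules only up to localization, and it does not commute with the limit over $n$: the tower $(\thh(B)/u^n)^{tC_p}\simeq\thh(B)^{tC_p}/\xi^n$ does not stabilize in any fixed degree (its associated graded $\thh(B)^{tC_p}/\xi\simeq\hh(B/R)^{tC_p}$ is a Tate construction of a bounded spectrum and hence unbounded in both directions, and the transition maps are $p$-power quotients). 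The mechanism of Lemma~\ref{lem: fixed points direct sum commute kinda}, which you cite as the engine, is about commuting an infinite \emph{direct sum} past homotopy fixed points/Tate, using that the objects in the limit over $n$ are uniformly bounded; it gives no handle on commuting a tensor with that limit, which is the problem here. The paper circumvents this entirely: Lemma~\ref{lem: tate is coherent} runs the motivic (BMS) filtration on $\THH(-)^{tC_p}$, whose graded pieces $\overline{\Prism}_{-/R}[2i]$ are \emph{bounded} (by relative dimension), so the filtration on $A\otimes_B\THH(B)^{tC_p}$ is automatically complete, and one reduces to \'etale base change for $\overline{\Prism}_{-/R}$. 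That is a genuinely different device than the $u$-adic tower, and it is what makes the step go through; it also explains why the authors isolate it as a separate lemma attributed to Hesselholt's method. So: right architecture, correct identification of the crux, but the proposed boundedness proof of the \'etale base change for $\thh(-)^{tC_p}$ is not adequate as stated.
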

\begin{proof}
    Since $X$ is smooth over $k$, each point  $x\in X$ admits an open neighborhood $U\subseteq X$ and an {\'e}tale morphism $U\to \mathrm{Spec}(k[x_1,\ldots, x_d])$ over $\mathrm{Spec}(k)$. It is then enough to show that each of these schemes $U$ has the desired vanishing property since $NTC$ is a Zariski sheaf. Note also that we may assume without loss of generality that the $U$ are affine.
    
    For a scheme $X$, denote by $\mathcal{F}(X)$ the fiber of the cyclotomic Frobenius $\phi_p:\thh(X)\to \thh(X)^{tC_p}$. By the same argument as in Lemma~\ref{lem: induction step for polynomial ptf} and Proposition~\ref{prop: coconnectivity of frobenius}, it is enough to show that $\tau_{\geq d-2}\mathcal{F}(U)=0$ and we have already shown that $\tau_{\geq d-2}\mathcal{F}(k[x_1,\ldots, x_d])=0$. The result will follow  from the fact that $\mathcal{F}(-)$ is a coherent {\'e}tale sheaf, in the sense that \[\mathcal{F}(U)\simeq U\otimes_{k[x_1,\ldots, x_d]}\mathcal{F}(k[x_1,\ldots, x_d]).\] Coherent {\'e}tale sheaves are closed under cofibers, so it is enough to show that this is true for $\thh(-)$ and $\thh(-)^{tC_p}$. That $\thh(-)$ is a coherent sheaf is by \cite[Lemma 2.4.2]{Hesselholt_Curves}. Let $f:A\to B$ be an {\'e}tale extension of $\mathbb{F}_p$-algebras. It is then enough to show that the map $B\otimes_A \thh(A)^{tC_p}\to \thh(B)^{tC_p}$ is an equivalence, where the $A$ and $B$ module structures come from the ring map $\thh(-)\xrightarrow{\phi_p}\thh(-)^{tC_p}$. Since we will need this result again in a slightly more general context we will prove this separately in Lemma~\ref{lem: tate is coherent}.
\end{proof}

The proof of Theorem~\ref{thm: main theorem in positive characteristic} makes use of the following observation, the importance of which the authors learned from Lars Hesselholt; see \cite[Propositions 6.2-6.4]{lars-hasse-weil} for this argument in a different context. 

\begin{lemma}~\label{lem: tate is coherent}
    Let $f:A\to B$ be an {\'e}tale map of smooth $R$-algebras, $R$ a perfectoid ring. Then the natural map \[B\otimes_A\thh(A)^{tC_p}\to \thh(B)^{tC_p}\] is an equivalence.
\end{lemma}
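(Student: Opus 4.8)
The plan is to reduce the statement to a question about $\THH$ and its Tate construction being "coherent étale sheaves" in the sense used in the proof of Theorem~\ref{thm: main theorem in positive characteristic}, i.e.\ that base change along an étale map is an equivalence. First I would recall that $\THH(-)$ itself satisfies étale base change: for an étale map $A \to B$ of $R$-algebras one has $B \otimes_A \THH(A) \xrightarrow{\sim} \THH(B)$, and moreover $\THH(B)$ is a perfect (or at least dualizable) $\THH(A)$-module after $p$-completion, by the standard étale-invariance results for $\THH$ (e.g.\ \cite[Lemma 2.4.2]{Hesselholt_Curves} together with the smooth-over-perfectoid structure; the relevant finiteness is that $\hh(B/A)^\wedge_p$ is concentrated in degree $0$ and is projective). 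The point is that étaleness gives finiteness of the relative cotangent complex, hence finiteness/dualizability of the relative $\THH$.

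Second, I would leverage this dualizability to commute the Tate construction past the base change. Concretely, $B \otimes_A \THH(A)^{tC_p}$ and $\THH(B)^{tC_p} = (B \otimes_A \THH(A))^{tC_p}$ should be compared by noting that $B$, viewed as a $\THH(A)^{tC_p}$-module via the cyclotomic Frobenius $\phi_p \colon \THH(A) \to \THH(A)^{tC_p}$, is obtained by base change from a dualizable (in fact perfect, even invertible on $\pi_0$) $\THH(A)$-module; since the Tate construction $(-)^{tC_p}$ commutes with tensoring against dualizable modules (it is lax symmetric monoidal and exact, and on a finite/perfect module the Tate construction of a tensor is the tensor of Tate constructions — this is where one really uses that $C_p$ is a finite group and the module is perfect, so no completion issues arise), we get the desired equivalence. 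In slightly more detail: write $\THH(B) \simeq \THH(A) \otimes_{\THH(A)} (B \otimes_A \THH(A))$, observe the second factor is a perfect $\THH(A)$-module $M$ with $C_p$ acting only through $\THH(A)$ (the $\mathbb{T}$-action on $\THH(B)$ restricted to the étale base-change description), so $\THH(B)^{tC_p} \simeq (\THH(A) \otimes_{\THH(A)} M)^{tC_p} \simeq \THH(A)^{tC_p} \otimes_{\THH(A)} M \simeq B \otimes_A \THH(A)^{tC_p}$, the middle equivalence being the projection formula for the Tate construction with perfect coefficients.

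The main obstacle, and the step that needs the most care, is justifying the projection formula $(\THH(A)^{tC_p}) \otimes_{\THH(A)} M \simeq (\THH(A) \otimes_{\THH(A)} M)^{tC_p}$ and in particular that the $C_p$-equivariant structure on $\THH(B)$ is correctly captured by "$M$ with trivial-ish equivariance induced from the base." For a general (non-perfect) module the Tate construction does not commute with colimits or base change, but perfectness of $M$ (equivalently, étaleness of $A \to B$, which forces $L_{B/A} = 0$ and hence finiteness of $\hh(B/A)$) rescues this: a perfect module is built from $\THH(A)$ by finite colimits and retracts, and $(-)^{tC_p}$ is exact, so it commutes with these finitely-built modules. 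One should also check that the $\mathbb{T}$-equivariant (hence $C_p$-equivariant) étale base-change equivalence $B \otimes_A \THH(A) \xrightarrow{\sim} \THH(B)$ is the one induced functorially, so that the comparison map in the statement is the one obtained from $\phi_p$ and naturality — this is essentially \cite[Lemma 2.4.2]{Hesselholt_Curves} applied $\mathbb{T}$-equivariantly, or can be seen from the fact that $\THH$ is a symmetric monoidal functor to cyclotomic spectra and étale maps are idempotent. Once these points are in place the conclusion follows formally, exactly as in Hesselholt's argument in \cite[Propositions 6.2–6.4]{lars-hasse-weil} that the excerpt points to.
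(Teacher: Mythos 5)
Your plan correctly identifies the goal — étale base change for $\THH(-)^{tC_p}$ — and starts well with the standard equivalence $B\otimes_A\THH(A)\xrightarrow{\sim}\THH(B)$, but the crucial finiteness claim is false and the projection-formula step breaks. You assert that étaleness of $A\to B$ makes $M=B\otimes_A\THH(A)$ a perfect (dualizable) $\THH(A)$-module, treating this as "equivalent" to $L_{B/A}=0$ and the finiteness of $\hh(B/A)$. These are different conditions. Étaleness does give $L_{B/A}=0$, hence $\hh(B/A)\simeq B$ in degree $0$ and $B$ flat over $A$; it does \emph{not} give $B$ finitely generated as an $A$-module. A localization $A=R[x]\to B=R[x,1/x]$ is étale, but $B$ is not compact, not dualizable, and not perfect as an $A$-module, so $M$ is not perfect over $\THH(A)$. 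Since $(-)^{tC_p}$ does not preserve colimits (the $X^{hC_p}$ leg of the norm cofiber sequence is an inverse limit), the projection formula $(\THH(A)^{tC_p})\otimes_{\THH(A)}M\simeq(\THH(A)\otimes_{\THH(A)}M)^{tC_p}$ you invoke cannot be deduced by "building $M$ in finitely many steps", because $M$ is not so built. Your argument would be fine for \emph{finite} étale maps, but the lemma is applied to arbitrary étale neighborhoods (e.g.\ Zariski opens in the proof of Theorem~\ref{thm: main theorem in positive characteristic}), which are typically not finite.

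The paper's proof uses a different finiteness input that does hold in this generality: the motivic filtration $\mathrm{Fil}^{\star}_{\mot}\THH(A)^{tC_p}$ from \cite{bhatt2019topological} has graded pieces $\overline{\Prism}_{A/R}[2i]$, and for $A$ smooth over the perfectoid ring $R$ the Hodge--Tate comparison places $\overline{\Prism}_{A/R}$ in a bounded range of cohomological degrees depending only on the relative dimension, uniformly in $i$. This uniform boundedness, and the fact that $B\otimes_A(-)$ preserves connectivity, guarantees that the filtrations on both $B\otimes_A\THH(A)^{tC_p}$ and $\THH(B)^{tC_p}$ are complete, reducing the lemma to a comparison of graded pieces. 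There the étaleness of $A\to B$ is used as étale base change for Hodge--Tate cohomology / the cotangent complex, not as any finiteness of $B$ as an $A$-module. In short: the relevant finiteness is boundedness of the graded pieces (coming from smoothness over $R$), not perfectness of $B$ over $A$, and that is what your proposal misses.
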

\begin{proof}
  In this context, let us recall from \cite{bhatt2019topological} that there is a motivic filtration $\mathrm{Fil}^{\star}_{\mot}\THH(A)^{tC_p}$ whose graded pieces are given by $\overline{\Prism}_{A/R}[2i]$ (ignoring Breul-Kisin twists in the perfectoid setting). Since $B\otimes_A (-)$ preserves connectivity and $\overline{\Prism}_{A/R}$ has no cohomology above the relative dimension of $A$ over $R$, we have that the motivic filtrations on both the source and target are complete and it is enough to check this isomorphism on associated graded terms. The Breuil-Kisin twists are trivial since we are working over a perfectoid ring and therefore we reduce to showing that $B\otimes_A \overline{\Prism}_{A/R}\to \overline{\Prism}_{B/R}$ is an equivalence which can be proved by reducing to a similar base change formula for the cotangent complex \cite[Tag 08R2-Tag 08R3]{stacks-project} or an appeal to \cite[Lemma 4.21]{Bhatt-Scholze} which uses the site-theoretic formalism. Note that the $A$-modules structure on $\overline{\Prism}_{A/R}$ is the correct one to apply this result by \cite[Theorem 1.14(2)]{Bhatt-Scholze}.
\end{proof}

We now turn to the $p$-torsion free case.

\begin{theorem}~\label{thm: main theorem ptf case}
    Let $X$ be a quasi-compact smooth scheme over $\mathrm{Spec}(R)$, $R$ a $p$-torsion free perfectoid ring. Then the map \[NTC(X;\mathbb{F}_p)\to L_{K(1)}NTC(X;\mathbb{F}_p)\] is $\max\{-1,(d-2)\}$-truncated.
\end{theorem}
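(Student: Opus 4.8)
The plan is to reduce Theorem~\ref{thm: main theorem ptf case} to the affine, polynomial case already handled in Corollary~\ref{cor: NTC of ptf perfectoid polynomial algebra is k(1)-local}, using Zariski descent together with the \'etale coherence of the Tate construction established in Lemma~\ref{lem: tate is coherent}. Since both $NTC(-;\mathbb{F}_p)$ and $L_{K(1)}NTC(-;\mathbb{F}_p)$ are Zariski sheaves on $X$, and since being $n$-truncated for a fixed $n$ can be checked on a finite affine cover (using the Mayer--Vietoris spectral sequence, which converges because $X$ is quasi-compact of finite Krull dimension, so only finitely many terms contribute in each degree), we may assume $X = \Spec(A)$ is affine. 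Moreover, since $X$ is smooth of relative dimension $d$ over $R$, each point has an affine open neighborhood admitting an \'etale map $A_0 := R[x_1,\ldots,x_d] \to A$; refining the cover further, we are reduced to the case of such an \'etale-over-polynomial algebra.

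First I would recall, exactly as in the proof of Theorem~\ref{thm: main theorem in positive characteristic}, that the relevant object to control is the fiber $\mathcal{F}(A)$ of the cyclotomic Frobenius $\phi_p:\thh(A;\mathbb{F}_p)\to \thh(A;\mathbb{F}_p)^{tC_p}$, and that the computations of $N\tc^{-}$ and $N\tp$ with their $\xi$-adic completions from Lemma~\ref{lem: fixed points direct sum commute kinda} (and \cite[Lemma 4.5]{riggenbach2022cusps}) reduce the $(d-2)$-truncatedness of $NTC(A;\mathbb{F}_p)\to L_{K(1)}NTC(A;\mathbb{F}_p)$, by the identical arc-square and $\beta$-inversion argument as in Lemma~\ref{lem: induction step for polynomial ptf} and Corollary~\ref{cor: NTC of ptf perfectoid polynomial algebra is k(1)-local}, to showing that $\tau_{\geq d-2}\mathcal{F}(A) = 0$. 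Since $\hh(A/R)^\wedge_p = B\otimes_{A_0}\hh(A_0/R)^\wedge_p$ is still concentrated in degrees $\leq d$ (\'etale base change for the cotangent complex), Proposition~\ref{prop: coconnectivity of frobenius} and its Corollary apply verbatim: the bound we get for the polynomial ring $A_0 = R[x_1,\ldots,x_d]$ is that $\phi_p$ is $(d-3)$-truncated even mod $p$ (as $R$ is $p$-torsion free), hence $\tau_{\geq d-2}\mathcal{F}(A_0) = 0$.

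The key remaining step is to propagate this vanishing from $A_0$ to $A$ along the \'etale map, i.e. to show $\mathcal{F}(-)$ is a coherent \'etale sheaf on smooth-over-$R$ algebras: $\mathcal{F}(A)\simeq A\otimes_{A_0}\mathcal{F}(A_0)$. As in Theorem~\ref{thm: main theorem in positive characteristic}, coherent \'etale sheaves are closed under cofibers, so it suffices to check this for $\thh(-;\mathbb{F}_p)$ and for $\thh(-;\mathbb{F}_p)^{tC_p}$. The first is \'etale base change for $\THH$ (\cite[Lemma 2.4.2]{Hesselholt_Curves}, or the fact that $\THH$ commutes with \'etale base change since $\mathbb{L}_{B/A}=0$); the second is precisely Lemma~\ref{lem: tate is coherent}, whose statement is formulated for \'etale maps of smooth $R$-algebras over an arbitrary perfectoid ring, so it applies directly here with $R$ $p$-torsion free. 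Given $\mathcal{F}(A)\simeq A\otimes_{A_0}\mathcal{F}(A_0)$ and the fact that $A\otimes_{A_0}(-)$ is exact and preserves connectivity, we conclude $\tau_{\geq d-2}\mathcal{F}(A) = 0$, and running the reduction backwards gives the asserted $\max\{-1,(d-2)\}$-truncatedness.

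The main obstacle I anticipate is bookkeeping rather than a genuinely new idea: one must be careful that the arc-square/$\beta$-inversion manipulations of Lemma~\ref{lem: induction step for polynomial ptf} (which were run for literal polynomial algebras over a $\mathbb{Z}_p^{cycl}$-algebra) still go through for a general \'etale-over-polynomial algebra over a $p$-torsion free perfectoid ring — this requires the fpqc-local reduction to the case with a compatible system of $p$-power roots of unity (as in the proof of Corollary~\ref{cor: NTC of ptf perfectoid polynomial algebra is k(1)-local}, citing \cite[Proposition 5.10]{Mathew2021TR}), and then checking that the module structures over $\thh(R;\mathbb{F}_p)^{hC_j}$ and $\thh(R;\mathbb{F}_p)^{tC_j}$ used to deduce $\xi$-power-torsion-ness are available in this generality, which they are since $\thh(A;\mathbb{F}_p)$ is a $\thh(R;\mathbb{F}_p)$-algebra. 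Once one accepts that these arguments are robust under \'etale base change — which is exactly what Lemma~\ref{lem: tate is coherent} is designed to guarantee — the theorem follows.
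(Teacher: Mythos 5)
Your proposal is correct and follows essentially the same strategy as the paper: reduce by Zariski and fpqc descent to an affine ring $A$ that is \'etale over $A_0 = R[x_1,\ldots,x_d]$ with $R$ containing compatible $p$-power roots of unity, run the $\beta$-inversion and direct-sum manipulations from Lemma~\ref{lem: fixed points direct sum commute kinda}, Lemma~\ref{lem: induction step for polynomial ptf}, and Corollary~\ref{cor: NTC of ptf perfectoid polynomial algebra is k(1)-local} to reduce to a Segal-conjecture bound $\tau_{\geq d-2}\mathcal{F}(A)=0$, and then propagate that bound from $A_0$ to $A$ by \'etale coherence of $\thh$ and $\thh(-)^{tC_p}$ (Lemma~\ref{lem: tate is coherent}). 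The paper's proof records two alternatives for this final step --- the coherence argument you use, or a direct citation of \cite[Proposition 5.10]{Mathew2021TR} --- and you have simply chosen the first; the paper is also slightly more explicit than you about invoking Lemma~\ref{lem: inj/surj/bij for degree reasons arg} together with the fact that homotopy orbits preserve coconnectivity of maps, but these are the same moves you summarize as ``the identical arc-square and $\beta$-inversion argument.''
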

\begin{proof}
    We will proceed in a very similar fashion as in the positive characteristic case. Note that since $L_{K(1)}-$ kills bounded above spectra, we get that $L_{K(1)}NTC(-;\mathbb{F}_p)$ is still a Zariski sheaf. Thus again by quasi-compactness and smoothness we may reduce to the case of $U\to \mathrm{Spec}(R[ x_1,\ldots, x_d ])$ an {\'e}tale cover with $U\cong \mathrm{Spec}(A)$. By fpqc descent we may also assume without loss of generality that $R$ contains a compatible system of $p^{th}$-power roots of unity. 

    Since we have a compatible system of $p^{th}$ power roots of unity in $R$, the same argument as in Corollary~\ref{cor: NTC of ptf perfectoid polynomial algebra is k(1)-local} shows that the fiber of the map $NTC(U;\mathbb{F}_p)\to L_{K(1)}NTC(U;\mathbb{F}_p)$ is given by the fiber of the map \[\Sigma \left(\bigoplus_{i\geq 1}\thh(A;\mathbb{F}_p)^{hC_i}\right)\to \Sigma\left(\bigoplus_{i\geq 1}\thh(A;\mathbb{F}_p)^{tC_i}\right)\] with the map given by $\Sigma(\phi_p^{hC_i}[p]-can^{hC_i})$. By Lemma~\ref{lem: inj/surj/bij for degree reasons arg}, this map will be (injection / surjection / isomorphism) on $\pi_*$ provided that $\phi_p^{hC_i}:\thh(A;\mathbb{F}_p)^{hC_i}\to \thh(A;\mathbb{F}_p)^{tC_{pi}}$ is an (injection / surjection / isomorphism) on $\pi_{*-1}$ for all $i$. Since homotopy orbits preserves coconnectivity of maps it is then enough to show that $\phi_p:\thh(A;\mathbb{F}_p)\to \thh(A;\mathbb{F}_p)^{tC_p}$ has fiber concentrated in degrees $\leq d-3$. This either follows by a similar analysis as in Theorem~\ref{thm: main theorem in positive characteristic} or by using \cite[Proposition 5.10]{Mathew2021TR}.
\end{proof}

We are now ready to prove Theorem~\ref{thm:main}, which we recall the statement here for convinience.

\begin{theorem}\label{thm:main-body}
    Let $X$ be a quasi-compact quasi-seperated smooth $R$-scheme of relative dimension $d$, $R$ a perfectoid ring. Then the map \[NTC(X;\mathbb{Z}_p)\to L_{K(1)}NTC(X)\] is an $(d-1)$-truncated and an isomorphism in degree $d$.
\end{theorem}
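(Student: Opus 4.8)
The strategy is to reduce the general quasi-compact quasi-separated case to the affine polynomial case already handled in Lemma~\ref{lem: main result for polynomial algebras}, by combining Zariski/Nisnevich descent with the étale base change result of Lemma~\ref{lem: tate is coherent}, exactly as in the positive-characteristic Theorem~\ref{thm: main theorem in positive characteristic} and the torsion-free Theorem~\ref{thm: main theorem ptf case}. First, I would observe that both $NTC(-;\Z_p)$ and $L_{K(1)}NTC(-)$ are Zariski (even Nisnevich) sheaves of spectra on smooth $R$-schemes: $NTC(-;\Z_p)$ is the $p$-completion of a localizing invariant hence satisfies Nisnevich descent, and $L_{K(1)}$ preserves this since it is a left adjoint composed with a localization and kills bounded-above spectra. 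Since $X$ is quasi-compact and quasi-separated, it admits a finite Nisnevich (or Zariski) cover by affines $U_i = \Spec(A_i)$, each of which, by smoothness of relative dimension $d$, receives an étale map $U_i \to \Spec(R[t_1,\dots,t_d])$. A spectral sequence / finite-limit argument then reduces the truncatedness statement for $X$ to the same statement for each $A_i$, since a finite homotopy limit of $n$-truncated maps (with the indexing poset of bounded dimension) shifts truncatedness by only a bounded amount — but here I want to avoid losing any bound, so the cleaner route is to argue at the level of the fibre $\scr{F}(-) := \fib(NTC(-;\Z_p) \to L_{K(1)}NTC(-))$ directly.

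The key point, as in Theorems~\ref{thm: main theorem in positive characteristic} and~\ref{thm: main theorem ptf case}, is that $\scr{F}(-)$ is a \emph{coherent étale sheaf} on smooth $R$-algebras: for an étale map $f \colon A' \to B'$ of smooth $R$-algebras one has $\scr{F}(B') \simeq B' \otimes_{A'} \scr{F}(A')$. This follows because $\scr{F}$ is built out of $\THH(-;\Z_p)$, $(-)^{hC_j}$, $(-)^{tC_j}$, $\THH(-;\Z_p) \otimes \Sigma^\infty_+ \TT/C_j$, homotopy limits, $p$-completion, and $K(1)$-localization, and the two nontrivial inputs are: (i) $\THH(-)$ is a coherent étale sheaf by \cite[Lemma 2.4.2]{Hesselholt_Curves}, and (ii) the Tate construction $\THH(-)^{tC_p}$ is a coherent étale sheaf by Lemma~\ref{lem: tate is coherent}, with the analogous statement for $tC_j$ and for the summands $\THH(-;\Z_p)\otimes \Sigma^\infty_+\TT/C_j$ following formally (coherent étale sheaves are closed under cofibres, retracts, and the relevant finite limits, using Lemma~\ref{lem: fixed points direct sum commute kinda} to control the interaction of the Tate construction with the direct-sum decomposition $\THH(A[t])\simeq \THH(A)\oplus \bigoplus_{j}\THH(A)\otimes\Sigma^\infty_+\TT/C_j$). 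Granting coherence, for each affine open $U_i = \Spec(A_i)$ we get $\scr{F}(A_i) \simeq A_i \otimes_{R[t_1,\dots,t_d]} \scr{F}(R[t_1,\dots,t_d])$, and since $A_i \otimes_{R[t_1,\dots,t_d]} (-)$ preserves connectivity/coconnectivity, the truncatedness bound for $\scr{F}(R[t_1,\dots,t_d])$ from Lemma~\ref{lem: main result for polynomial algebras} — namely that $NTC(R[t_1,\dots,t_d];\Z_p)\to L_{K(1)}NTC(R[t_1,\dots,t_d];\Z_p)$ is $(d-1)$-truncated and an isomorphism in degree $d$ — passes to each $A_i$, hence to $X$ by descent.

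To extract the $NTC$ statement from Lemma~\ref{lem: main result for polynomial algebras}, which is phrased for $\tc(R[t_1,\dots,t_d];\Z_p)$: one splits $\tc(A[t];\Z_p) \simeq \tc(A;\Z_p) \oplus N\tc(A;\Z_p)$ compatibly with $L_{K(1)}$, and since $\tc(R[t_1,\dots,t_d];\Z_p)\to L_{K(1)}$ is $\max\{0,d-1\}$-truncated and iso in degree $\max\{1,d-1\}$, while $\tc(R;\Z_p)\to L_{K(1)}$ is $(-1)$-truncated by \cite[Proposition 5.10]{Mathew2021TR} (after fpqc-localizing to get enough roots of unity, as in Corollary~\ref{cor: NTC of ptf perfectoid polynomial algebra is k(1)-local}), the complementary summand $N\tc(R[t_1,\dots,t_{d-1}];\Z_p)$ carries the bound $(d-1)$-truncated, iso in degree $d$ — matching the indexing since $NTC$ of relative dimension $d$ corresponds to the $N$ of the $(d-1)$-variable polynomial statement plus one extra variable. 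One then globalizes via the coherent-sheaf argument above.

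\textbf{Main obstacle.} The technical heart is verifying that $\THH(-)^{tC_p}$ (and the relevant Tate constructions of the summands) is genuinely a coherent étale sheaf on smooth $R$-algebras — this is Lemma~\ref{lem: tate is coherent}, and it is precisely where the motivic filtration and prismatic cohomology enter: completeness of the motivic filtration on $\THH(-)^{tC_p}$ requires $\overline{\Prism}_{A/R}$ to have cohomology only in degrees $\leq d$ (the relative dimension), and coherence then reduces to the cotangent-complex base-change formula $B\otimes_A \overline{\Prism}_{A/R}\simeq \overline{\Prism}_{B/R}$ for étale $f\colon A\to B$. The delicate bookkeeping is ensuring that passing through Lemma~\ref{lem: fixed points direct sum commute kinda} — i.e. the Nygaard/$u$-adic completion of the big direct sum — does not destroy coherence; this works because the $p$-completion of $\bigoplus_j (\THH(A;\Z_p)\otimes\Sigma^\infty_+\TT/C_j)^{tC_p}$ is already $\xi$-adically (hence Nygaard-) complete, as emphasized in Remark~\ref{rem:key}, so no further completion is needed and the relevant limits are finite in each degree. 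Everything else is the descent formalism and the summand bookkeeping, which is routine given the affine case.
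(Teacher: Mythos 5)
Your reduction to the affine étale-over-polynomial case via descent is the right starting move, and the identification of Lemma~\ref{lem: tate is coherent} as the technical heart is accurate. But there are two genuine gaps that prevent the argument from going through as written.

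First, you assert that the full fiber $\mathcal{F}(-) := \fib\bigl(NTC(-;\Z_p) \to L_{K(1)}NTC(-)\bigr)$ is a coherent étale sheaf on smooth $R$-algebras. The paper establishes coherence only for $\THH(-)$ and $\THH(-)^{tC_p}$ (Lemma~\ref{lem: tate is coherent}), hence for the fiber of the cyclotomic Frobenius, and then in the two special cases where the excision pieces live — perfect $\F_p$-algebras and $p$-torsion free perfectoid rings with roots of unity — it reduces the fiber of $NTC \to L_{K(1)}NTC$ to that Frobenius fiber: via vanishing of $L_{K(1)}TC$ in characteristic $p$, and via the $\beta$-inversion fracture-square argument in the torsion-free case. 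Neither reduction is available for a general perfectoid $R$ with $p$-torsion, and nothing in the paper shows that $(-)^{h\TT}$, the Nygaard/$\xi$-adic completion of the big direct sum, or $L_{K(1)}$ itself commute with the base change $B \otimes_A (-)$ that coherence requires. This is exactly why the proof goes through the Milnor square $R \to R_0 = R/R[p]$ over $(R/p)_{\mathrm{perf}} \to (R_0/p)_{\mathrm{perf}}$ and invokes Land--Tamme to get a pullback square of $NTC$'s — an excision step your plan omits entirely.

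Second, the bookkeeping does not deliver the claimed bound. Lemma~\ref{lem: main result for polynomial algebras} for $d+1$ variables — which contains $NTC(R[t_1,\dots,t_d];\Z_p)$ as a summand — only gives $\max\{0,d\}$-truncatedness, and globalizing that by base change would give $(d)$-truncated, not $(d-1)$. The sharper bound in the theorem comes from combining the $(d-2)$-truncated bound of Theorem~\ref{thm: main theorem ptf case} on $\mathcal{F}_{NTC}(A_0)$ with the vanishing of $\tau_{\geq d} NTC(A')$ and $\tau_{\geq d} NTC(A'_0)$ from Theorem~\ref{thm: main theorem in positive characteristic}, propagated through the excision pullback square; this is precisely what Remark~\ref{rem:bounds} flags. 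Finally, your proposal says nothing about the degree-$d$ isomorphism statement, which needs a separate diagram chase (the last paragraph of the paper's proof) because truncation functors do not preserve fiber sequences.
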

\begin{proof}
    Similar to the proof of Theorem~\ref{thm: main theorem ptf case}, we may reduce to the case of $X=\mathrm{Spec}(A)$ where $A$ is a smooth $R$-algebra admitting a factorization $R\to R[x_1,\ldots, x_d]\xrightarrow{f} A$ with $f$ {\'e}tale. 
    
    Let $R_0$ denote the perfecotid ring $R/(R[p])$, so that as in Lemma~\ref{lem: main result for polynomial algebras} we have a Milnor square 
    \[
    \begin{tikzcd}
        R \ar[d] \ar[r] & R_0\ar[d]\\
        (R/p)_{perf} \ar[r] & (R_0/p)_{perf}
    \end{tikzcd}
    \] of perfectoid rings where $R_0$ is $p$-torsion free and both $(R/p)_{perf}$ and $(R_0/p)_{perf}$ are perfect $\mathbb{F}_p$-algebras. Note that tensoring the above Milnor square with any flat $R$-algebra will again give a Milnor square since by flatnees tensoring with such a ring will preserve pullback squares \cite[Lemma 3.2.8]{elmanto2021cdh} and tensoring with any ring preserves pushout squares. Consequently if we define $A_0:= R_0\otimes_R A$, $A':= (R/p)_{perf}\otimes_R A$, and $A'_0:= (R_0/p)_{perf}\otimes_R A$ we that the square 
    \[
    \begin{tikzcd}
        A \ar[r] \ar[d] & A_0 \ar[d]\\
        A' \ar[r] & A'_0
    \end{tikzcd}
    \] is a Milnor square.

    By \cite[Theorem A]{land-tamme} we then have a pullback square
    \[
    \begin{tikzcd}
        NTC(A;\mathbb{Z}_p) \ar[r] \ar[d] & NTC(A_0;\mathbb{Z}_p) \ar[d]\\
        NTC(A';\mathbb{Z}_p) \ar[r] & NTC(A'_0;\mathbb{Z}_p)
    \end{tikzcd}
    \]
    and an equivalence $L_{K(1)}NTC(A;\mathbb{Z}_p)\simeq L_{K(1)}NTC(A_0;\mathbb{Z}_p)$. Taking $\mathcal{F}_{NTC}(-)$ to be the fiber of the localization map $NTC(-;\mathbb{Z}_p)\to L_{K(1)}NTC(-)$ we then have that the square
    \[
    \begin{tikzcd}
        \mathcal{F}_{NTC}(A)\ar[r] \ar[d] & \mathcal{F}_{NTC}(A_0)\ar[d]\\
        NTC(A';\mathbb{Z}_p) \ar[r] & NTC(A'_0;\mathbb{Z}_p)
    \end{tikzcd}
    \] is pullback. 

    Now, base change along a map preserves smoothness and can only lower relative dimension. Thus $A_0$ is a smooth $R_0$-algebra of relative dimension at most $d$, so by Theorem~\ref{thm: main theorem ptf case} we have that $\mathcal{F}_{NTC}(A_0)$ is $(d-2)$-truncated. In addition each of $NTC(A')$ and $NTC(A'_0)$ are $d-1$ truncated by Theorem~\ref{thm: main theorem in positive characteristic}, so we get that $\mathcal{F}_{NTC}(A)$ is $(d-1)$-truncated. 

    All that remains to show is that the map $\pi_d(NTC(A;\ZZ_p))\to \pi_d(L_{K(1)}NTC(A;\ZZ_p))$ is an isomorphism. Since the map $NTC(A;\ZZ_p)\to L_{K(1)}NTC(A)$ is $(d-1)$-truncated, we already have that the map on $\pi_d$ is injective. Thus to prove the result we only need to verify that $\pi_d(NTC(A;\mathbb{Z}_p))\to \pi_d(L_{K(1)}NTC(A))$ is surjective. This is accomplished in two steps. First, the map $\pi_d(NTC(A;\mathbb{Z}_p))\to \pi_d(NTC(A_0;\mathbb{Z}_p))$ is an equivalence. To see this, note that the pullback square above induces a long exact sequence \[\ldots \to \pi_{d+1}(NTC(A_0';\ZZ_p))\to \pi_d(NTC(A;\ZZ_p))\to \substack{\pi_d(NTC(A_0;\ZZ_p))\\ \oplus \\ \pi_d(NTC(A';\ZZ_p))}\to \pi_d(NTC(A_0';\ZZ_p))\to\ldots\] and $\pi_{d+1}(NTC(A_0';\ZZ_p))\cong \pi_d(NTC(A';\ZZ_p))\cong \pi_d(NTC(A_0';\ZZ_p))=0$  by Theorem~\ref{thm: main theorem in positive characteristic}. Thus We get the stated isomorphism. We also have that $L_{K(1)}NTC(A)\to L_{K(1)}NTC(A_0)$ is an equivalence since the fiber is truncated. Thus surjectivity follows from the fact that the diagram 
    \[
    \begin{tikzcd}
        \pi_d(NTC(A;\mathbb{Z}_p)) \ar[r, "\cong"] \ar[d] & \pi_d(NTC(A_0;\mathbb{Z}_p))\ar[d] \\
        \pi_d(L_{K(1)}NTC(A)) \ar[r, "\cong"] & \pi_d(L_{K(1)}NTC(A_0))
    \end{tikzcd}
    \]
    commutes and that the right hand vertical map is an equivalence since $\mathcal{F}_{NTC}(A_0)$ is $(d-2)$-truncated.
\end{proof}

\begin{remark}\label{rem:bounds} It is not straightforward to conclude the bounds on homotopy groups obtained in Theorem~\ref{thm:main-body} from Lemma~\ref{lem: main result for polynomial algebras} because the truncation functors do not preserve fibre sequences. Hence we need additional arguments as in the last paragraph of the previous proof. 
\end{remark}

\section{Application: $p$-adic unit disks of curves over perfectoid bases}

We now turn our attention to the $K$-theory applications of our result. Thus far we have only consider topological cyclic homology, and so our first step is to translate these results over to algebraic $K$-theory.
\begin{lemma}~\label{lem: TC to K theory translation}
    Let $A$ be the $p$-completion of a smooth $R$-algebra, $R$ a perfectoid ring. Suppose that $A$ has relative dimension $d$ over $R$. Then the map \[K(A\langle t\rangle, (t); \mathbb{Z}_p)\to TC(A\langle t\rangle, (t); \mathbb{Z}_p)\] is $\max\{d-1,0\}$-truncated and an equivalence in degree $d$. 
    \end{lemma}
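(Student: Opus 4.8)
The plan is to deduce this $K$-theoretic statement from the $TC$-theoretic content of Theorem~\ref{thm:main-body} (equivalently Theorem~\ref{thm:main}) together with the Dundas--Goodwillie--McCarthy theorem. First I would recall that $K(-,(t);\ZZ_p)$ and $TC(-,(t);\ZZ_p)$ denote the relative invariants along the augmentation $A\langle t\rangle \to A$, and that by the henselian (``nil'') invariance of \cite{clausen2018k} combined with the fact that $TC(-;\ZZ_p)$ is insensitive to $p$-completion, the relative $K$-theory $K(A\langle t\rangle, (t);\ZZ_p)$ agrees with $\tau_{\geq 0}NK(A;\ZZ_p)$ and $TC(A\langle t\rangle,(t);\ZZ_p)$ agrees with $\tau_{\geq 0}NTC(A;\ZZ_p)$ (using $TC(A[t];\ZZ_p)\simeq TC(A\langle t\rangle;\ZZ_p)$). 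In fact, since $A\langle t\rangle$ is connective, relative $K$-theory along a nilpotent-type ideal is connective, so the relevant comparison is already between connective spectra.

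The core input is the Dundas--Goodwillie--McCarthy theorem: the square relating $K$ and $TC$ is cartesian on nilpotent (or, via \cite{clausen2018k}, henselian) extensions, so the relative cyclotomic trace $K(A\langle t\rangle,(t);\ZZ_p)\to TC(A\langle t\rangle,(t);\ZZ_p)$ is an equivalence. Wait --- that would make the lemma trivial, so the subtlety must be that the lemma is really comparing $K(A\langle t\rangle,(t);\ZZ_p)$ with the (unconnective) spectrum $NTC(A;\ZZ_p)\simeq TC(A\langle t\rangle,(t);\ZZ_p)$ only after applying the henselian-invariance identification $\tau_{\geq 0}NTC(A;\ZZ_p)\simeq K(A\langle t\rangle,(t);\ZZ_p)$; so the content is: the map $K(A\langle t\rangle,(t);\ZZ_p) \to NTC(A;\ZZ_p)$ (cyclotomic trace landing in the possibly-unconnective $NTC$) has fiber the fiber of $\tau_{\geq 0}NTC \to NTC$, i.e. the $(-1)$-truncation $\tau_{\leq -1}NTC(A;\ZZ_p)$ shifted. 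Then I would invoke Theorem~\ref{thm:main-body}: the map $NTC(A;\ZZ_p)\to L_{K(1)}NTC(A;\ZZ_p)$ is $(d-1)$-truncated and an isomorphism on $\pi_d$. Since $L_{K(1)}NTC(A;\ZZ_p)$ is a module over $L_{K(1)}\ZZ_p = \KU_p$ and hence $2$-periodic, and since its connective part is governed by the étale cohomology in degrees $\leq d+1$ (from the motivic/syntomic considerations of Section 2 applied to $A\langle t\rangle$), the negative homotopy groups $\pi_i NTC(A;\ZZ_p)$ for $i\leq -1$ are controlled: precisely, $\pi_i NTC(A;\ZZ_p)\cong \pi_i L_{K(1)}NTC(A;\ZZ_p)$ for $i\leq d-1$, and these vanish for $i<0$ once $d\geq 0$ is small, or more carefully the fiber $\tau_{\leq -1}NTC$ sits in degrees $\leq \max\{d-2,-1\}$. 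Combining, the fiber of $K(A\langle t\rangle,(t);\ZZ_p)\to TC(A\langle t\rangle,(t);\ZZ_p)$ is $\Sigma\tau_{\leq -1}NTC(A;\ZZ_p)$, which is $\max\{d-1,0\}$-truncated, and vanishes in degree $d$ (as $d\geq 1$ forces the fiber into degrees $\leq d-1$, and $d=0$ needs a separate low-degree check), giving the claimed truncation bound and degree-$d$ isomorphism.

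More concretely, the key steps in order: (1) set up the identifications via \cite{clausen2018k} and $p$-completion-invariance of $TC$, reducing to comparing the connective cover map $\tau_{\geq 0}NTC(A;\ZZ_p)\to NTC(A;\ZZ_p)$; (2) observe that the fiber of this map is $\tau_{\leq -1}NTC(A;\ZZ_p)[?]$ and that its homotopy groups in degrees $\le -1$ are isomorphic, by Theorem~\ref{thm:main-body}, to those of $L_{K(1)}NTC(A;\ZZ_p)$ (since $-1 \leq d-1$); (3) use $2$-periodicity of $L_{K(1)}NTC$ together with the bound that its homotopy in degrees $\leq d+1$ is detected by syntomic/étale cohomology concentrated in cohomological degrees $\leq d+1$ (cf.\ the argument of Section~2 and Lemma~\ref{lem: tate is coherent}) to pin down where the fiber is concentrated; (4) assemble the truncation estimate $\max\{d-1,0\}$ and check the degree-$d$ isomorphism separately, handling $d=0$ by hand.

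The main obstacle I anticipate is step (3): making precise exactly which homotopy groups of $NTC(A;\ZZ_p)$ below degree $0$ are nonzero, since $L_{K(1)}NTC$ being $2$-periodic a priori has homotopy in all negative degrees, and one needs the fact that the comparison $NTC \to L_{K(1)}NTC$ is an isomorphism only in a range (degrees $\leq d-1$ plus the degree-$d$ injection) to control the fiber $\tau_{\leq -1}NTC$ --- in particular to see it is bounded above by $\max\{d-2,-1\}$ and hence that its shift is $\max\{d-1,0\}$-truncated with no contribution in degree $d$. This requires carefully tracking the periodicity against the cohomological amplitude bound, exactly the kind of bookkeeping that Remark~\ref{rem:bounds} warns is delicate because truncations do not commute with fiber sequences.
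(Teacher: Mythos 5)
Your step (1) has a gap that the rest of the argument cannot repair. You invoke the ``henselian/nil invariance'' of \cite{clausen2018k} to identify $K(A\langle t\rangle,(t);\ZZ_p)$ with $\tau_{\geq 0}NTC(A;\ZZ_p)$, but the ideal $(t)\subset A\langle t\rangle=A[t]^\wedge_p$ is neither nilpotent nor henselian: $A\langle t\rangle$ is $p$-complete, not $(t)$-adically complete (it is not $A[[t]]$), so neither Dundas--Goodwillie--McCarthy nor Clausen--Mathew--Morrow applies to the pair $(A\langle t\rangle,(t))$. What the paper actually does is apply CMM to the henselian pairs $(A,\sqrt{(p)})$ and $(A\langle t\rangle,\sqrt{(p)})$ and arrange them in a cube, which produces the fiber sequence
\[\tau_{\geq 0}K(A\langle t\rangle,(t);\ZZ_p)\longrightarrow NTC(A;\ZZ_p)\longrightarrow NTC(A/\sqrt{(p)};\ZZ_p),\]
using $\A^1$-invariance of $K$-theory for the smooth-over-perfect-$\F_p$ ring $A/\sqrt{(p)}$ to kill the fourth corner. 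The cofiber of the map you are trying to bound is therefore $NTC(A/\sqrt{(p)};\ZZ_p)$, \emph{not} $\tau_{\leq -1}NTC(A;\ZZ_p)$; the identification $K(A\langle t\rangle,(t);\ZZ_p)\simeq\tau_{\geq 0}NTC(A;\ZZ_p)$ holds only when $NTC(A/\sqrt{(p)};\ZZ_p)$ is coconnective, i.e.\ the perfectoid case $d=0$, and is genuinely false for $d\geq 1$.

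A secondary issue is in your step (3). You write that $\pi_i NTC(A;\ZZ_p)\cong \pi_i L_{K(1)}NTC(A;\ZZ_p)$ for $i\leq d-1$, but Theorem~\ref{thm:main-body} says the map is a $\tau_{\geq d}$-equivalence, giving isomorphisms for $i\geq d$ and no control below degree $d$. Since $L_{K(1)}NTC$ is nonzero in arbitrarily negative degrees (by periodicity), this comparison cannot bound $\tau_{\leq -1}NTC(A;\ZZ_p)$, so even granting your step (1) the desired estimate would not follow. The paper instead bounds the cofiber $NTC(A/\sqrt{(p)};\ZZ_p)$ directly by the positive-characteristic vanishing $NTC_i(A/\sqrt{(p)})=0$ for $i\geq d$ (Theorem~\ref{thm: main theorem in positive characteristic}), which is where the $\max\{d-1,0\}$-truncation and the degree-$d$ isomorphism actually come from.
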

\begin{proof}
    Contemplate the following commutative cube
    \[
    \begin{tikzcd}
        \tau_{\geq 0}K(A;\ZZ_p)\ar[rr] \ar[dd] \ar[rd]& & TC(A;\ZZ_p) \ar[dd] \ar[rd] & \\
         & \tau_{\geq 0}K(A\langle t\rangle; \ZZ_p) \ar[rr] \ar[dd] & & TC(A\langle t\rangle; \ZZ_p)\ar[dd]\\
         K(A/\sqrt{(p)};\ZZ_p) \ar[rr] \ar[dr, "\simeq"] & & TC(A/\sqrt{(p)};\ZZ_p)\ar[rd] & \\
        & K(A/\sqrt{(p)}[t];\ZZ_p) \ar[rr] & & TC(A/\sqrt{(p)}[t];\ZZ_p) & 
    \end{tikzcd}
    \]
    where $\sqrt{(p)}$ denotes the radical of the ideal $(p)$. The front and back faces are pullback square by \cite[Theorem A]{clausen2018k}. Thus the square of cofibers is also a pullback. We also have that the bottom left diagonal map is an equivalence since $A/\sqrt{(p)}$ is smooth over the perfect $\mathbb{F}_p$-algebra $R/\sqrt{(p)}$, and so we get a fiber sequence $\tau_{\geq 0}K(A\langle t\rangle, (t);\ZZ_p)\to TC(A\langle t\rangle, (t);\ZZ_p)\to NTC(A/\sqrt{(p)};\ZZ_p)$, and this cofiber term is $(d-1)$-coconnective by Theorem~\ref{thm:main-body}. The result then follow from the result on topological cyclic homology.
\end{proof}
Note that this Lemma also proves the last statement of Theorem~\ref{thm:main}.

In this Section we will demonstrate the utility of our results by studying the case of curves over perfectoid rings. We will first consider the characteristic $p$ case, where we can prove a vanishing result not just for the algebraic $K$-groups but also on the level of syntomic cohomology.

\begin{lemma}~\label{lem: vanishing in positive characteristic for curves}
    Let $C$ be a smooth curve over $k$, where $k$ is a perfect $\mathbb{F}_p$ algebra. Then $N\mathbb{Z}_p(2)^{syn}(C)=0$.
\end{lemma}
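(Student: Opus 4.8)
The plan is to reduce the claim $N\ZZ_p(2)^{\syn}(C) = 0$ to the structure of the syntomic complexes $\ZZ_p(i)^{\syn}$ over $\FF_p$-algebras and to the vanishing of $NTC$ established in Theorem~\ref{thm: main theorem in positive characteristic}. First I would recall that for an $\FF_p$-algebra the syntomic cohomology $\ZZ_p(i)^{\syn}$ has graded pieces controlled by the Nygaard-filtered prismatic (equivalently, crystalline) cohomology, and that $L_{K(1)}TC$ vanishes on $\FF_p$-algebras, so that the motivic filtration on $NTC(C;\ZZ_p)$ has graded pieces $\mathrm{gr}^i_{\mot} NTC(C;\ZZ_p) \simeq N\ZZ_p(i)^{\syn}(C)[2i]$ with no contribution from the \'etale side. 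The idea is then to run the computation in weight $2$.

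The key steps, in order: (1) Observe that $N(-)$ applied to the motivic filtration on $TC(-;\ZZ_p)$ gives a filtration on $NTC(C;\ZZ_p)$ whose $i$-th graded piece is $N\ZZ_p(i)^{\syn}(C)[2i]$, since $TC(C[t];\ZZ_p) \simeq TC(C;\ZZ_p) \oplus NTC(C;\ZZ_p)$ compatibly with motivic filtrations. (2) Because $C$ is a smooth curve over a perfect $\FF_p$-algebra $k$, the relative dimension is $1$, and $\ZZ_p(i)^{\syn}(C[t])$, $\ZZ_p(i)^{\syn}(C)$ are concentrated in cohomological degrees $\leq 2$ (one for the curve plus one for the polynomial variable, matching the Nygaard dimension bound); hence $N\ZZ_p(i)^{\syn}(C)$ is concentrated in cohomological degrees $\leq 2$, i.e. $N\ZZ_p(i)^{\syn}(C)[2i]$ lives in homotopical degrees $\geq 2i-2$. (3) By Theorem~\ref{thm: main theorem in positive characteristic} with $d=1$, $NTC_j(C) = 0$ for $j \geq 1$. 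Comparing with the conditional-convergent (in fact complete, by the degree bounds) motivic spectral sequence, the weight-$2$ graded piece $N\ZZ_p(2)^{\syn}(C)[4]$ can only contribute to $NTC_j(C)$ for $j \in \{2,3,4\}$ after accounting for differentials; but all these degrees are $\geq 1$ where $NTC$ vanishes. (4) One must check there are no differentials into or out of the weight-$2$ line that could mask a nonzero $N\ZZ_p(2)^{\syn}(C)$: differentials leaving weight $2$ land in weight $\geq 3$, whose graded pieces $N\ZZ_p(i)^{\syn}(C)[2i]$ for $i \geq 3$ live in degrees $\geq 2i - 2 \geq 4$; and the only possibly nonzero cohomology of $N\ZZ_p(2)^{\syn}(C)$, namely $H^1$ and $H^2$, contribute in degrees $3$ and $2$ respectively. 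Since the target groups $NTC_2(C), NTC_3(C)$ vanish and there is no room for incoming differentials (weight $\leq 1$ graded pieces contribute only in degrees $\leq 0$, as $N\ZZ_p(0)^{\syn}$ and $N\ZZ_p(1)^{\syn}$ of an $\FF_p$-algebra curve vanish or are concentrated appropriately), we conclude $H^1 = H^2 = 0$, and lower cohomology vanishes for weight reasons ($\ZZ_p(2)^{\syn}$ of an $\FF_p$-algebra is connective in the Nygaard sense, with no $H^0$ contribution surviving in the $N$-part). Therefore $N\ZZ_p(2)^{\syn}(C) = 0$.

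The main obstacle I anticipate is step (4): carefully controlling the motivic spectral sequence so that the vanishing of $NTC_j(C)$ in degrees $\geq 1$ genuinely forces $N\ZZ_p(2)^{\syn}(C)$ to vanish, rather than merely constraining it up to extensions and differentials. The cleanest way around this is probably to avoid spectral-sequence bookkeeping entirely: instead argue directly that $N\ZZ_p(i)^{\syn}(C)$ is a perfect-type complex whose formation commutes with the \'etale base change $k[x] \to A$ (as in Lemma~\ref{lem: tate is coherent}, via the prismatic description of the graded pieces of $\THH^{tC_p}$), reduce to $C = \A^1_k$, and there compute $N\ZZ_p(2)^{\syn}(k[x])$ explicitly from $N\overline{\Prism}_{k[x]/k}$-type formulas together with the identification of $NTC$'s graded pieces — using that over a perfect field the relevant prismatic/Nygaard data of $k[x]$ is completely explicit. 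In that explicit setting the vanishing of $N\ZZ_p(2)^{\syn}$ becomes a direct computation with divided-power envelopes, and one sidesteps convergence issues. Either route should work; I would write up the second as the primary argument and remark on the first.
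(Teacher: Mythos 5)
Your first route is essentially the paper's argument: reduce to the motivic spectral sequence for $NTC(C;\ZZ_p)$ (noting that $L_{K(1)}TC$ vanishes on $\FF_p$-algebras so the graded pieces are the $N\ZZ_p(i)^{\syn}(C)[2i]$), and force vanishing of $N\ZZ_p(2)^{\syn}(C)$ from the vanishing of $NTC_n(C)$ for $n\ge 1$. However, the bookkeeping you flagged as the ``main obstacle'' is exactly where your sketch has gaps, and the paper resolves them differently from what you wrote.

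First, the cohomological bound. For affine $C$ one has $\dim_{\mathcal{N}}(C[t])\le 2$, and the syntomic complexes $N\ZZ_p(i)^{\syn}(C)$ are concentrated in cohomological degrees $[0,3]$, not $\le 2$ as you asserted (the fibre defining $\ZZ_p(i)^{\syn}$ can gain one degree beyond the Nygaard dimension). In particular you cannot dismiss $H^3(N\ZZ_p(2)^{\syn}(C))$; it contributes in homotopical degree $4-3=1$, where $NTC$ does vanish, but it is also the target of a potential $d_1$-differential $H^0(N\ZZ_p(1)^{\syn}(C))\to H^3(N\ZZ_p(2)^{\syn}(C))$. The paper identifies this as the \emph{only} possible differential and kills it by observing that $H^0(N\ZZ_p(1)^{\syn}(C))=0$ when $C$ is smooth (no $p$-torsion units on a reduced ring). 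Your sketch does not address this differential; the degree bound you used to rule it out is simply false. Second, to rule out differentials \emph{leaving} weight $2$, the paper does not rely on degree estimates at all but on the stronger fact that $N\ZZ_p(i)^{\syn}(C)=0$ for $i\ge 3$, because $C[t]$ has Krull dimension $2$ over a perfect $\FF_p$-algebra and logarithmic de Rham--Witt sheaves vanish above the dimension. With the correct bound $[0,3]$, $H^0(N\ZZ_p(2)^{\syn}(C))$ has a potential outgoing $d_1$ to $H^3(N\ZZ_p(3)^{\syn}(C))$, which your ``degrees $\ge 4$'' claim would not see; the paper's weight-vanishing sidesteps this cleanly. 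Once there are no differentials, every $H^i(N\ZZ_p(2)^{\syn}(C))$ for $0\le i\le 3$ sits as a subquotient of $NTC_{4-i}(C)$ with $4-i\ge 1$, and Theorem~\ref{thm: main theorem in positive characteristic} finishes the argument. Your second route (\'etale base change to $\A^1_k$ via Lemma~\ref{lem: tate is coherent} plus an explicit de Rham--Witt computation) is not what the paper does; it is plausible but needs care because the natural module structure on $\THH(-)^{tC_p}$ is Frobenius-twisted, and transporting that to an étale base-change statement for $N\ZZ_p(2)^{\syn}$ requires additional work beyond what Lemma~\ref{lem: tate is coherent} states. The spectral-sequence route, carried out with the correct bound $[0,3]$ and the two vanishing inputs above, is actually short and self-contained.
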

\begin{proof}
    Since $N\mathbb{Z}_p(2)^{syn}$ is a Zariski sheaf we may assume without loss of generality that $C$ is affine. It then follows that $N\mathbb{Z}_p(2)^{syn}(C)$ is concentrated in degrees $[0,3]$. Since $C$ is smooth it also follows that $N\mathbb{Z}_p(i)(C)=0$ for all $i\geq 3$. The only possible differential in the spectral spectral sequence \cite[Theorem 1.12(5)]{bhatt2019topological} is the differential $H^0(N\mathbb{Z}_p(1)^{syn}(C))\to H^3(N\mathbb{Z}_p(2)^{syn}(C))$. Again since $C$ is smooth $H^0(N\mathbb{Z}_p(1)^{syn}(C))=0$, so there are no differentials and each group $NTC_*(C)$ is given by an extension of the groups $H^i(N\mathbb{Z}_p(j)(C))$ where $2j-i=*$, $0\leq j\leq 2$, and $0\leq i\leq 3$. Thus the cohomology of $N\mathbb{Z}_p(2)^{syn}(C)$ contributes to $\tau_{\geq 1}NTC(C)$, but this vanishes by Theorem~\ref{thm: main theorem in positive characteristic}.
\end{proof}

Consider now $C$ the $p$-adic completion of a smooth curve over $R$, where $R$ is a general perfectoid ring. We then have that $\tau_{\geq 1}K(C\langle t\rangle, (t);\mathbb{Z}_p)\simeq \tau_{\geq 1}NTC(C;\mathbb{Z}_p)\simeq \tau_{\geq 1}L_{K(1)}NTC(C;\mathbb{Z}_p)$ by Theorem~\ref{thm:main}. In the case of $R$ a $\mathbb{Z}_p^{cycl}$ algebra we may then completely describe the $NK$ groups of $C$ in terms of more classical invariants.

\begin{theorem}
    Let $C$ be the $p$-adic completion of a smooth affine curve over $R$, where $R$ is a perfectoid $\mathbb{Z}_p^{cycl}$-algebra. Then there are isomorphisms \[K_{2i-1}(C\langle t\rangle, (t);\mathbb{Z}_p)\cong H^0(C\langle t\rangle;(\mathbb{G}_m)^\wedge_p)/H^0(C;(\mathbb{G}_m)^\wedge_p)\] and \[K_{2i}(C\langle t\rangle, (t);\mathbb{Z}_p)\cong H^2(C; N\mathbb{Z}_p(2)^{syn})\] for all $i\geq 1$.
\end{theorem}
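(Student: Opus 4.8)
The plan is to combine Theorem~\ref{thm:main} with the motivic/syntomic spectral sequence and the vanishing inputs coming from positive characteristic. First I would recall that by Lemma~\ref{lem: TC to K theory translation}, in degrees $\geq 1$ we have $K_n(C\langle t\rangle,(t);\mathbb{Z}_p)\cong NTC_n(C;\mathbb{Z}_p)$, and by Theorem~\ref{thm:main} (applied with $d=1$) the map $NTC(C;\mathbb{Z}_p)\to L_{K(1)}NTC(C;\mathbb{Z}_p)$ is $0$-truncated and an isomorphism in degree $1$; in particular it is an isomorphism on $\pi_n$ for all $n\geq 1$. So it suffices to understand $\tau_{\geq 1}L_{K(1)}NTC(C;\mathbb{Z}_p)$, which by Kim's motivic filtration on $K(1)$-local $TC$ (or directly by the identification in \cite{hesselholt2020tcreview} as recalled in the proof of Corollary~\ref{cor: NTC of ptf perfectoid polynomial algebra is k(1)-local}) is governed by the étale cohomology $R\Gamma_{\et}(C\langle t\rangle[1/p];\mathbb{Z}_p(\star))$ relative to that of $C$.

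Next I would run the syntomic spectral sequence of \cite[Theorem 1.12(5)]{bhatt2019topological} for $NTC(C;\mathbb{Z}_p)$, exactly as in the proof of Lemma~\ref{lem: vanishing in positive characteristic for curves} but now over the perfectoid base $R$. Since $C$ is a $p$-completed smooth \emph{curve}, $N\mathbb{Z}_p(j)^{\syn}(C)$ is concentrated in cohomological degrees $[0,2]$ for the weight-$\leq 1$ part and $[0,3]$ for weight $2$ (the relative dimension being $1$, so $\Prism$-cohomology lives in degrees $\leq 1$, and the shift in the syntomic complex adds at most one more in the top weight), and $N\mathbb{Z}_p(j)^{\syn}(C)=0$ for $j\geq 3$ because $C$ is smooth (so $N$ of the relevant invariants vanishes, being $\mathbb{A}^1$-invariant in high enough weight — this is where the $\mathbb{Z}_p^{cycl}$ hypothesis, giving enough roots of unity, enters to trivialize Breuil–Kisin twists and identify things with $\mathbb{G}_m$-cohomology in low weight). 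The only a priori nonzero groups contributing to $NTC_n(C;\mathbb{Z}_p)$ for $n\geq 1$ are then $H^0(N\mathbb{Z}_p(j)^{\syn}(C))$, $H^1(N\mathbb{Z}_p(j)^{\syn}(C))$ and $H^2(N\mathbb{Z}_p(2)^{\syn}(C))$; I would argue that the weight filtration degenerates (no room for differentials after accounting for the vanishing, just as in Lemma~\ref{lem: vanishing in positive characteristic for curves}, the potentially offending $d_2$ emanating from $H^0(N\mathbb{Z}_p(1)^{\syn})$ either vanishes by smoothness or lands where it cannot matter), and identify $N\mathbb{Z}_p(1)^{\syn}(C)$ with a shift of $R\Gamma(C;(\mathbb{G}_m)^\wedge_p)$-type data so that $H^1(N\mathbb{Z}_p(1)^{\syn}(C))\cong H^0(C\langle t\rangle;(\mathbb{G}_m)^\wedge_p)/H^0(C;(\mathbb{G}_m)^\wedge_p)$. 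Reading off the extension: $K_{2i-1}$ picks up the weight-$1$ piece $H^1(N\mathbb{Z}_p(1)^{\syn}(C))$, and $K_{2i}$ picks up the weight-$2$ piece $H^2(N\mathbb{Z}_p(2)^{\syn}(C))$, with all lower or higher weight contributions vanishing; crucially the fact that these invariants are $2$-periodic (i.e. independent of $i\geq 1$) is precisely the Quillen–Lichtenbaum content of Theorem~\ref{thm:main}, namely that $NTC(C;\mathbb{Z}_p)$ agrees with its own $2$-periodic $K(1)$-localization in all degrees $\geq 1$.

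The main obstacle I anticipate is bookkeeping the syntomic spectral sequence carefully enough to pin down the \emph{exact} groups (not just the associated graded): one must check that the extensions are trivial in the relevant degrees and that no nonzero differentials survive, which requires knowing the precise amplitude of $N\mathbb{Z}_p(j)^{\syn}(C)$ in each weight over a not-necessarily-torsion-free perfectoid ring, and separately that $H^1(N\mathbb{Z}_p(1)^{\syn}(C))$ really is the stated quotient of $\mathbb{G}_m$-cohomology rather than some twist of it — this last identification is where one leans on $R$ being a $\mathbb{Z}_p^{cycl}$-algebra, so that $\mathbb{Z}_p(1)^{\syn}\simeq (\mathbb{G}_m)^\wedge_p[-1]$ in the relevant range and the weight-$1$ syntomic cohomology is just (shifted) $p$-adic unit cohomology, together with the identification $NTC \simeq L_{K(1)}NTC$ in degrees $\geq 1$ to guarantee the answer is genuinely independent of $i$. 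I would dispatch the weight-$\leq 1$ bookkeeping using the smoothness of $C$ as in Lemma~\ref{lem: vanishing in positive characteristic for curves}, and the identification of $H^2(N\mathbb{Z}_p(2)^{\syn}(C))$ as literally the stated group by noting it is the \emph{only} surviving contribution to the even $K$-groups.
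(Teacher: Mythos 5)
Your overall strategy matches the paper's: reduce to $i=1$ via the $K(1)$-local $2$-periodicity granted by Theorem~\ref{thm:main}, then attack $NTC_1(C;\Z_p)$ and $NTC_2(C;\Z_p)$ via the syntomic spectral sequence. However, there are genuine gaps in the bookkeeping that the paper's proof handles and your proposal does not.

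The most serious issue is the claim that $N\Z_p(j)^{\syn}(C)=0$ for $j\geq 3$ because ``$N$ of the relevant invariants vanishes, being $\A^1$-invariant in high enough weight.'' This is false in mixed characteristic, and indeed the \emph{failure} of $\A^1$-invariance of syntomic cohomology over $p$-adic bases is precisely the phenomenon this paper is about: the $K(1)$-local $2$-periodicity just established forces $N\Z_p(j)^{\syn}(C)\neq 0$ for infinitely many $j$. The paper does not make any weight-vanishing claim; instead it bounds the \emph{cohomological} degree using the Nygaard dimension, namely $\dim_{\mathcal{N}}(C\langle t\rangle)\leq 2$ from the Hodge--Tate comparison, so that $N\Z_p(i)^{\syn}(C)$ has cohomology in degrees $\leq 3$ for every $i$; the bidegree constraint $2j-i=1$ (resp.\ $2j-i=2$) then forces $j\leq 2$. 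In your argument, even once you accept your own degree bounds, the group $H^3(C;N\Z_p(2)^{\syn})$ contributes in total degree $1$ and you silently drop it. The paper kills it by reducing it along the comparison $H^3(C;N\Z_p(2)^{\syn})\xrightarrow{\sim}H^3(C/\sqrt{(p)};N\Z_p(2)^{\syn})$ (a theorem of Antieau--Mathew--Morrow--Nikolaus) and invoking the positive-characteristic vanishing Lemma~\ref{lem: vanishing in positive characteristic for curves}; you have no substitute for this step.

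Two further missing ingredients: to compute $K_2$ you need to rule out a contribution from $H^0(C;N\Z_p(1)^{\syn})$, which the paper does by identifying $\Z_p(1)^{\syn}\simeq(\Gm)^{\wedge}_p[-1]$ and observing that $p^{\infty}$-torsion units in $C\langle t\rangle$ already lie in $C$; and to show that $H^1(C;N\Z_p(1)^{\syn})$ actually survives to $NTC_1$ (rather than dying by a differential into it), the paper uses the split furnished by the inclusion $B\Gm\to K(-)$ and the determinant $K(-)\to B\Gm$. Your appeal to ``no room for differentials'' is not justified without these explicit arguments.
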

\begin{proof}
    By the discussion preceding the theorem statement we have that $\tau_{\geq 1}K(C\langle t \rangle, (t);\mathbb{Z}_p)$ is the $1$-connective cover of a $L_{K(1)}K(\mathbb{Z}_p^{cycl})$-algebra, and therefore is $2$-periodic. Thus to prove the result we need to show that the computation works for $i=1$. Since $C$ is the $p$-adic completion of a smooth affine curve, $\dim_{\mathcal{N}}(C\langle t\rangle)\leq 2$ by the Hodge-Tate comparison theorem. Therefore we have that the complexes $N\mathbb{Z}_p(i)(C)$ have cohomology in degree at most $3$. Thus the only groups that can contribute to $NTC_1(C;\mathbb{Z}_p)$ are $H^1(C;N\mathbb{Z}_p(1)^{syn})$ and $H^3(C;N\mathbb{Z}_p(2)^{syn})$. By \cite[Theorem 5.2]{antieau2020beilinson} we have that $H^3(C;N\mathbb{Z}_p(2)^{syn})\to H^3((C/\sqrt{p});N\mathbb{Z}_p(2)^{syn})$ is an isomorphism, and the target vanishes by Lemma~\ref{lem: vanishing in positive characteristic for curves}. Thus the only group which can contribute to $NTC_1(C;\mathbb{Z}_p)$ is $H^1(C;N\mathbb{Z}_p(1)^{syn})\cong H^0(C\langle t\rangle;(\mathbb{G}_m)^\wedge_p)/H^0(C;(\mathbb{G}_m)^\wedge_p)$ and by using the inclusion map $B\mathbb{G}_m\to K(-)$ and the determinant map $K(-)\to B\mathbb{G}_m(-)$ we have that this group does contribute to $NTC_1$. Therefore \[K_1(C\langle t\rangle, (t);\mathbb{Z}_p)\cong H^0(C\langle t\rangle;(\mathbb{G}_m)^\wedge_p)/H^0(C;(\mathbb{G}_m)^\wedge_p)\] as desired.

    We now turn our attention to computing $K_2(C\langle t\rangle, (t);\mathbb{Z}_p)$. The syntomic cohomology groups which can contribute to this group are $H^0(C;N\mathbb{Z}_p(1)^{syn})$ and $H^2(C;N\mathbb{Z}_p(2)^{syn})$ when $C$ is affine. Since $C$ is smooth we have that $\mathbb{Z}_p(1)^{syn}\simeq (\mathbb{G}_m)^\wedge_p [-1]$ and so $H^0(C;N\mathbb{Z}_p(1)^{syn})\cong NH^{-1}(C;(\mathbb{G}_m)^\wedge_p)\cong 0$ since any $p^\infty$-torsion in $\mathbb{G}_m(C\langle t\rangle)$ must already be in $\mathbb{G}_m(C)$. The only possible group which can contribute to $K_2(C\langle t\rangle, (t);\mathbb{Z}_p)$ is therefore $H^2(C;N\mathbb{Z}_p(2)^{syn})$. This group cannot support or receive any nonzero differentials for degree reasons, so we get the desired isomorphism.
\end{proof}

\bibliographystyle{plainc}
\bibliography{bibliography}
\end{document}